\theoremstyle{plain}
\newtheorem{thm}{Theorem}[section]
\newtheorem{coro}{Corollary}[section]
\newtheorem{lem}{Lemma}[section]
\newtheorem{prop}{Proposition}[section]
\newtheorem{rem}{Remark}[section]
\numberwithin{equation}{section}
\newcommand{\abs}[1]{\left| #1\right|}
\newcommand{\E}{\mathbb{E}}
\newcommand{\PP}{\mathbb{P}}
\newcommand{\R}{\mathbb{R}}
\newcommand{\N}{\mathbb{N}}
\newcommand{\F}{\mathcal{F}}
\begin{document}

\title{\bf Efficient estimation of jump parameters for stochastic differential equations driven by L\'evy processes}

\author[1]{\textsc{Elise Bayraktar}}
\author[1]{\textsc{Emmanuelle Cl\'ement}}
\affil[1]{LAMA, Univ Gustave Eiffel, Univ Paris Est Creteil, CNRS, F-77447 Marne-la-Vall\'ee, France.}

\date{September, 1 2025}
\maketitle

\noindent
{\bf Abstract.}
In a high-frequency context, we investigate the efficient estimation of scaling and jump activity parameters for a stochastic differential equation driven by a L\'evy process with both diffusion component and pure-jump component.
We first study efficiency for the prototype  L\'evy process. With an in-depth analysis  of the behavior of the density of the process in small time,  we prove  that the LAN Property holds for the joint estimation of the diffusion, scaling and jump activity  parameters. We next consider  a stochastic equation driven both by a Brownian Motion and a locally stable pure-jump L\'evy process. Using a quasi-likelihood estimation method, we exhibit an estimator that attains the optimal rate of convergence previously identified. The asymptotic properties of the estimator are derived from sharp approximation results for the stochastic equation and for the locally stable distribution.

\noindent
\textbf{MSC $2020$}.  60G51; 60G52; 60J75; 62F12.

\noindent
\textbf{Key words}: LAN property; L\'evy process; parametric inference; stable process; stochastic differential equation.

\section{Introduction}

L\'evy driven stochastic equations constitute a wide class of processes, characterized by the superposition of a drift term, a diffusion term and a pure-jump component, that are appropriate and intensively used for modeling  many  phenomena.
Statistical inference for such processes reveals various asymptotic behaviors depending on the estimated component, in particular optimal rates of convergence differ for the estimation of the drift, diffusion or jump parameters.
When sampled at high-frequency, the dominant component of the process is the diffusion part and it is proved  in Aït-Sahalia and Jacod  \cite{AitSahaliaJacodInfoSigma, AitSahalia2008}  that the optimal rate and asymptotic variance that can be expected in estimating diffusion parameters are not affected by the presence of jumps. Conversely, the estimation of jump parameters relies critically on the presence or not of a Brownian Motion in the dynamic of the process. 
In this paper, we focus on the parametric estimation of the jump component in presence of a Brownian Motion and we especially are interested in investigating efficient estimation of   jump activity  and scaling parameters.

A fundamental concept in parametric inference to characterize efficiency is the Local Asymptotic Normality (LAN) Property 
introduced by Le Cam \cite{LeCamLAN} and developed by Hájek \cite{HajekConvolution, HajekMinimax} and Le Cam \cite{LeCamMinimax} (we also refer to Le Cam and Yang \cite{CamYangLAN}, Höpfner \cite{Hopfner} and Ibragimov and Has$'$minskii \cite{Ibragimov}). Considering 
 $(\Omega_n, \F_n, (\PP_n^\theta ; \theta \in \Theta))_n$  a sequence of statistical experiments parametrized by $\Theta \subset \R^d$,  the LAN Property holds at $\theta$ with rate $u_n$ (decreasing to 0 as $n$ goes to infinity) if for any $ h \in \R^d$ the log-likelihood ratio admits the expansion as $n \to + \infty$
$$\ln \frac{d \PP_n^{\theta + u_n h}}{d \PP_n^\theta} = h^T \Delta_{n, \theta} - \frac{1}{2} h^T I_n(\theta) h + o_{\PP_n^\theta}(1),$$
 where
$(\Delta_{n, \theta},  I_n(\theta) )$ converges in law to $(I(\theta)^{1/2} \mathcal{N}, I(\theta))$ under $\PP_n^\theta$, with $\mathcal{N}$ a standard Gaussian variable and $I(\theta)>0$ a deterministic matrix. As an  important consequence of the LAN Property, we deduce  from the Hájek’s convolution Theorem that  
if the LAN Property holds at $\theta$ with rate $u_n$ and information $ I(\theta)$, then  for any regular estimator $\hat{\theta}_n$  such that
$u_n^{-1} (\hat{\theta}_n - \theta)$ converges in law to a random variable  $Z_\theta$, 
$Z_\theta$ admits the decomposition $Z_\theta = I(\theta)^{-1/2} \mathcal{N} + R$ with $\mathcal{N}$ a standard Gaussian vector and $R$ independent of $\mathcal{N}$. The estimator is  then asymptotically efficient if  $R=0$. 
The minimax Theorem also applies. For any regular estimator $\hat{\theta}_n$ of $\theta$ and $l:\R^d \to [0, + \infty)$ a symmetric loss function satisfying $l(0)=0$ and $l(|x|)\leq l(|y|)$ if $|x|\leq |y|$, then
$$ \liminf_{n \to + \infty} \E [l(u_n^{-1}(\hat{\theta}_n - \theta))] \geq \E [l(I(\theta)^{-1/2}\mathcal{N})],$$
with $\mathcal{N}$ a standard Gaussian variable.
In particular, we get a lower bound for the asymptotic variance of any estimator by taking $l(u)=|u|^2$ as the loss function.

The LAN Property for  Lévy processes, based on high-frequency observations, has been investigated in several papers, we refer to Masuda \cite{Masuda2015Chapter} for a general overview.
For the pure-jump L\'evy model
$$X_t = \mu t + \delta S_t^\alpha$$
where $(S_t^{\alpha})_{t \geq 0}$ is a symmetric $\alpha$-stable L\'evy process with index $\alpha \in (0, 2)$,
it was proved by Aït-Sahalia and Jacod \cite{AitSahalia2008}, by studying the Fisher information, that the optimal convergence rates for the marginal estimation of $\mu$, $\delta$ and $\alpha$ were respectively $1/n^{1/\alpha - 1/2}$, $1/ \sqrt{n}$ and $1/ \ln(n)\sqrt{n}$. But 
it was shown by  Masuda \cite{Masuda2009} that  the (marginally optimal) diagonal rate of convergence $u_n = \text{diag}(1/n^{1/\alpha - 1/2} , 1/\sqrt{n}, 1/\ln(n)\sqrt{n})$  leads to a singular Fisher information matrix for the joint estimation of $(\mu, \delta, \alpha)$, where the singularity appears in the estimation of the jump parameters $(\delta, \alpha)$, suggesting that the optimal rate is slower and non-diagonal.
This was confirmed by Brouste and Masuda \cite{BrousteMasuda2018}, who finally established  the LAN Property  with a non-singular information matrix for the joint estimation of $(\mu, \delta, \alpha)$  using the non-diagonal rate 
$$
u_n=\begin{pmatrix}\frac{1}{n^{1/\alpha - 1/2}} & 0 \\
    0 & \frac{1}{\sqrt{n}}  v_n \end{pmatrix}
    \quad \text{ where }
     v_n = \begin{pmatrix}1 & -\frac{\delta}{ \alpha^2}\ln(n) \\
    0 & 1 \end{pmatrix}.
$$
For a Lévy process with both a Brownian Motion $(B_t)_{t\geq0}$ and a symmetric $\alpha$-stable jump component $(S_t^{\alpha})_{t \geq 0}$,
$$    X_t = \sigma B_t + \delta S_t^\alpha,$$
 the LAN Property with non-singular information matrix has not been established, and only  partial results exist, suggesting a similar interplay between the scaling coefficient $\delta$ and the jump activity $\alpha$.  Aït-Sahalia and Jacod \cite{AitSahalia2012} proved that  the optimal marginal rates of convergence for the estimation of the scale parameter $\delta$ and the jump activity index $\alpha$ are respectively $\ln(n)^{\alpha/4}/n^{\alpha/4}$ and $\ln(n)^{\alpha/4}/(\ln(n)n^{\alpha/4})$. 
Mies \cite{Mies} additionally established  that using the corresponding diagonal rate $u_n = \text{diag}(\ln(n)^{\alpha/4}/n^{\alpha/4},\ln (n)^{\alpha/4}/(\ln(n)n^{\alpha/4} ))$
when estimating $(\delta, \alpha)$ jointly leads 
to the singular Fisher information matrix 
$$\frac{2  c_{\alpha}}{\alpha(2-\alpha)^{\alpha/2}} \Bigl(\frac{ \delta }{\sigma} \Bigr)^{\alpha}
\begin{pmatrix}\frac{\alpha^2}{\delta^2} & \frac{\alpha}{2\delta} \\
    \frac{\alpha}{2\delta} & \frac{1}{4} \end{pmatrix}.$$

In this paper,  we solve the problem of efficiency in estimating the jump parameters of a L\'evy process with both a Brownian and an $\alpha$-stable jump component, in a high-frequency setting. This is conducted with an in-depth study of the density  of $X_t$ (which is a convolution between a Gaussian distribution and a stable distribution)  for $t$ small. 
By performing precise asymptotic expansions for the density and its derivatives (that extend the work of Aït-Sahalia and Jacod \cite{AitSahalia2012}), we
prove that  the LAN Property holds  for the estimation of $(\sigma, \delta, \alpha)$  with the non-diagonal rate 
$$    
u_n = \begin{pmatrix}\frac{1}{\sqrt{n}} & 0 \\
    0 & \frac{\ln(n)^{\alpha/4}}{n^{\alpha/4}} v_n\end{pmatrix} \quad \text{ where }
     v_n = \begin{pmatrix}1 & -\frac{\delta}{2 \alpha}[\ln(n) - \ln(\ln n)] \\
    0 & 1 \end{pmatrix},
$$
and with a non-singular Fisher information matrix.  The non-diagonal correction in the rate of convergence that leads to a non-singular information matrix is not easy to identify, especially the $\ln(\ln(n))$ term,  that  appears to provide some subtle compensations in the expansion of the density derivatives.

We next investigate the efficient estimation of  the scaling and  jump activity parameters for a stochastic equation driven by a Brownian Motion and a pure-jump locally stable L\'evy process. Approximating successively the stochastic equation by an Euler scheme and the locally stable distribution by the stable one, we consider a quasi-likelihood function and prove that it is possible to exhibit a rate optimal (and probably also variance optimal)  estimator of the parameters, including also the diffusion parameter. This was not obtained before  for a stochastic equation.
 For It\^{o} semimartingales with a diffusion component, Aït-Sahalia and Jacod have proposed in
\cite{AitSahalia2009}  an estimator of the jump activity $\alpha$ based on jump counting  with rate of convergence   $n^{\alpha/10}$ for locally stable jumps ($n^{\alpha/8}$ in the stable case) far from the optimal rate. A similar estimation method was studied in Jing et al. \cite{Jing}. A more accurate estimator was proposed in Bull  \cite{Bull}, by  combining increments of the process with different time-scales, achieving the near optimal rate of convergence $n^{\alpha/4-\epsilon}$ for $\epsilon$ arbitrarily small. Using a method of moments, rate optimality  has been attained by  Mies  \cite{Mies} for a L\'evy process with characteristic triplet $(\mu, \sigma^2, \nu)$, assuming that the jump part is close to the superposition of $M$ stable processes. The moment estimation method has been extended to the stable Cox-Ingersoll-Ross model in Bayraktar and Cl\'ement \cite{BC}, but exhibits a loss of order $\ln(n)^{\epsilon}$ in the rate of convergence.  \\

The paper is organized as follows. The LAN Property  for a L\'evy process with both Brownian and stable components is established in Section \ref{S:LAN}. Efficient estimation for a stochastic equation is studied in Section \ref{S:effiSDE}. We also present in this section sharp bounds for the weak one-step error of the Euler discretization scheme and for the approximation of the locally stable distribution by the stable one. The technical Section \ref{S:Section2} is devoted to the study of the density of the L\'evy process given as the convolution product between the Gaussian distribution and the stable distribution. We develop in this section  a generic approach to expand the density and its derivatives which is not only crucial for the estimation purpose but also interesting in itself. Proofs are gathered in Section \ref{S:proof}. \\


Throughout the paper, we denote by $C$ all positive constants (or $C_p$ if it depends on an auxiliary parameter $p$), which might change from line to line.
We also use the following notation.
For any process $Y$, we denote the increments with step size $\Delta_n=1/n$ by $\Delta _i^n Y = Y_{(i+1)/n} - Y_{i/n}$. We write $\xRightarrow[]{}$ for the convergence in law, $\xRightarrow{\mathcal{L}_s}$ for the stable convergence in law and $\xrightarrow{\PP}$ for the convergence in probability.
The expectation under $\PP^{\theta}$ is denoted by $\E^{\theta}$, we simply write $\E$ if there is no ambiguity. We will use the notation $\E_i Z$ for the conditional expectation 
$\E(Z | \mathcal{F}_{i/n})$.
For any matrix $M$, $M^T$ denotes the transpose of $M$ and $|| M ||$ denotes the Euclidean norm. For any differentiable function $f$ depending on $(y, \theta)$ with $y \in \R$ and  $\theta=(\sigma, \delta, \alpha)$, we denote by $f^{\prime}$ the derivative  of $f$ with respect to $y$, for $a \in \{\sigma, \delta, \alpha\}$ $\partial_a f$ is the derivative with respect to $a$ and  $\nabla_\theta f = (\partial_\sigma f, \partial_\delta f, \partial_\alpha f)^T$ is the gradient of $f$. 


\section{LAN Property for a stable L\'evy process with Brownian component} \label{S:LAN}

In this section, we consider the L\'evy process $(Y_t)_{t \geq 0}$ defined  on a filtered probability space  $(\Omega, \mathcal{F}, (\mathcal{F}_t)_{t \geq 0}, \PP)$ by
\begin{equation}\label{E:Levy}
    Y_t = \sigma B_t + \delta S_t^\alpha,
\end{equation}
where $(B_t)_{t\geq0}$ is a standard Brownian Motion  and $(S_t^{\alpha})_{t \geq 0}$ is an independent symmetric stable L\'evy process with  characteristic function
\begin{equation} \label{E:stable}
    \E \left(e^{iz S_1^{\alpha}} \right) =  \exp \left( - |z|^\alpha \right), \quad \alpha \in (0,2).
\end{equation}
The  L\'evy measure of $(S^\alpha_t)_{t\geq0}$ is given by
\begin{equation}\label{eq:mesureLevy}
    F^{\alpha}(dz) = \frac{c_\alpha}{|z|^{\alpha+1}} {\bf 1}_{z \neq 0} \, dz,
\end{equation}
where from Lemma 14.11 in Sato \cite{Sato},  $c_1=1/ \pi$ and for $\alpha\neq 1$
\begin{equation} \label{eq:calpha}
c_{\alpha}= -\frac{\alpha(\alpha-1)}{2\Gamma(2-\alpha) \cos(\pi \alpha/2)}, \mbox{ with } \Gamma(y)= \int_0^{\infty} x^{y-1} e^{-x} dx, \quad y>0.
\end{equation}
Our aim is to prove that the Local Asymptotic Normality Property holds for the estimation of 
$\theta = (\sigma, \delta, \alpha) \in (0, \infty) \times (0, \infty) \times (0, 2)$, based on high-frequency observations $(Y_{i/n})_{0 \leq i \leq n}$.
By assumption, the increments $(\Delta _i^n Y)_{0\leq i \leq n-1}$ are independent and identically distributed and denoting by  $p_{1/n}(\cdot, \theta)$ the density of  $Y_{1/n}$ 
the log-likelihood function has the expression
\begin{equation*}\label{def Ln}
     \ln L_n (\theta) = \sum_{i=0}^{n-1} \ln p_{1/n}(\Delta _i^n Y, \theta).
\end{equation*} 
To prove the LAN Property, we will study the asymptotic behavior of  the score function and Hessian matrix, defined respectively by
\begin{equation} \label{def:Gn} 
 G_n(\theta)= \nabla_\theta \ln L_n (\theta), \quad J_n(\theta) = \nabla_\theta G_n(\theta).
\end{equation}
It is not immediate to identify the appropriate rate of convergence of these quantities as $p_{1/n}$ is the  convolution between the Gaussian distribution and the stable distribution. 
Let $\phi$ be the density of $B_1$ (the standard Gaussian variable) and $\varphi_{\alpha}$ be the density of $S^{\alpha}_1$, then
observing that $(B_{1/n}, S^{\alpha}_{1/n})$ has the distribution of $(n^{-1/2}B_1, n^{-1/\alpha}S_1^\alpha)$, the density of $Y_{1/n}$ is given by
 \begin{equation}\label{def pi}
p_{1/n}(y, \theta)
=\frac{\sqrt{n}}{\sigma} \int \phi \left(\frac{y- n^{-1/\alpha} \delta  z}{n^{-1/2} \sigma} \right) \varphi_{\alpha}(z) dz.
\end{equation}
Approximation results for this convolution product and its derivatives with respect to the parameters are given in the technical section \ref{S:Section2}, showing how  to identify the optimal rate of convergence $u_n(\theta)$ that we define  below.

Our first result is a Central Limit Theorem for the score $ G_n(\theta_0)$ with non-diagonal rate $u_n(\theta_0)^T$ given by
\begin{equation}\label{eq:un}
    u_n(\theta) = \begin{pmatrix}\frac{1}{\sqrt{n}} & 0 \\
    0 & \frac{\ln(n)^{\alpha/4}}{n^{\alpha/4}} v_n(\theta) \end{pmatrix} \quad \text{ with } v_n(\theta) = \begin{pmatrix}1 & - \frac{\delta}{2 \alpha}[\ln(n) - \ln(\ln n)] \\
    0 & 1 \end{pmatrix}.
\end{equation}

\begin{prop}\label{Th:TCL}We have the convergence in law under $\PP^{\theta_0}$
\begin{equation*}
    u_n(\theta_0)^T \;  G_n(\theta_0) \xRightarrow[]{} \mathcal{N}(0,I(\theta_0)),
\end{equation*}
where $I(\theta)$ is the symmetric positive definite matrix defined by 
\begin{equation}\label{eq:I}
    I(\theta) = \begin{pmatrix} \frac{2}{\sigma_0^2} & 0 & 0 \\
    0 & \frac{\alpha^2}{\delta^2} \kappa_0(\theta)  & \frac{\alpha}{\delta}\kappa_0(\theta)  \kappa_1(\theta)\\ 
    0 & \frac{\alpha}{\delta}\kappa_0(\theta)  \kappa_1(\theta) & \kappa_0(\theta)(\kappa_1(\theta)^2+\frac{1}{\alpha^2}) \end{pmatrix},
\end{equation}
with $\kappa_0(\theta)=\frac{2  c_{\alpha}}{\alpha(2-\alpha)^{\alpha/2}} (\frac{ \delta }{\sigma})^{\alpha}$ and $\kappa_1(\theta)=\ln(\frac{ \delta }{\sigma}) + \frac{ \partial_\alpha c_{\alpha}}{c_{\alpha}} - \frac{\ln(2-\alpha)}{2} - \frac{1}{\alpha}$, where $c_{\alpha}$ is given by \eqref{eq:calpha}.

\end{prop}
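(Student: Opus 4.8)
The plan is to exploit that the increments $(\Delta_i^n Y)_{0 \le i \le n-1}$ are i.i.d., so that the score is a sum of i.i.d. summands and the statement reduces to a central limit theorem for a triangular array. Writing
$$u_n(\theta_0)^T G_n(\theta_0) = \sum_{i=0}^{n-1} \xi_{i,n}, \qquad \xi_{i,n} := u_n(\theta_0)^T \, \nabla_\theta \ln p_{1/n}(\Delta_i^n Y, \theta_0),$$
the vectors $(\xi_{i,n})_{0 \le i \le n-1}$ are, for each fixed $n$, i.i.d. copies of $\xi_{0,n}$. The first step is to establish that they are centered, i.e. $\E^{\theta_0}[\nabla_\theta \ln p_{1/n}(\Delta_0^n Y, \theta_0)] = 0$; this is the usual score identity, obtained by differentiating $\int p_{1/n}(y, \theta)\, dy = 1$ under the integral sign, the interchange being justified by the uniform bounds on the density and its derivatives collected in Section \ref{S:Section2}.

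The core of the argument is then the computation of the normalized covariance. Since the $\xi_{i,n}$ are i.i.d. and centered, one has $\mathrm{Cov}\bigl(\sum_{i=0}^{n-1}\xi_{i,n}\bigr) = n\, u_n(\theta_0)^T\, i_n(\theta_0)\, u_n(\theta_0)$, where
$$i_n(\theta) := \E^{\theta}\!\left[\nabla_\theta \ln p_{1/n}(\Delta_0^n Y,\theta)\,\bigl(\nabla_\theta \ln p_{1/n}(\Delta_0^n Y,\theta)\bigr)^{T}\right]$$
is the per-increment Fisher information, and the goal is to prove $n\, u_n(\theta_0)^T\, i_n(\theta_0)\, u_n(\theta_0) \longrightarrow I(\theta_0)$. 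Here I would use the self-similarity $\Delta_0^n Y \overset{d}{=} n^{-1/2}\sigma_0 B_1 + n^{-1/\alpha}\delta_0 S_1^{\alpha}$ to write each entry of $i_n(\theta_0)$ as an integral against the convolution density $p_{1/n}$, and then insert the sharp asymptotic expansions of $p_{1/n}$ and of $\partial_\sigma p_{1/n},\,\partial_\delta p_{1/n},\,\partial_\alpha p_{1/n}$ furnished by Section \ref{S:Section2}. As the Brownian scale $n^{-1/2}$ dominates the stable scale $n^{-1/\alpha}$ (because $\alpha<2$), the density is a tail-type perturbation of a Gaussian, and the leading behavior of each derivative is read off from these expansions; this yields both the vanishing of the $\sigma$-to-$(\delta,\alpha)$ off-diagonal entries and the $2\times 2$ block governed by $\kappa_0(\theta_0)$ and $\kappa_1(\theta_0)$.

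It then remains to verify a Lindeberg condition for the triangular array $(\xi_{i,n})$. Because the normalized covariances converge to the finite matrix $I(\theta_0)$, it suffices to establish a Lyapunov-type bound, namely $n\, \E^{\theta_0}\bigl[\norme{\xi_{0,n}}^{2+\varepsilon}\bigr] \to 0$ for some $\varepsilon>0$, which again follows from moment estimates on the normalized score components derived from Section \ref{S:Section2}. The Lindeberg–Feller theorem for triangular arrays of i.i.d. summands then gives $\sum_{i=0}^{n-1}\xi_{i,n} \xRightarrow[]{} \mathcal{N}(0, I(\theta_0))$, and the positivity of $I(\theta_0)$ guarantees that the limit is genuinely non-degenerate.

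The main obstacle is the covariance computation of the second step. The raw derivatives $\partial_\delta \ln p_{1/n}$ and $\partial_\alpha \ln p_{1/n}$ are asymptotically collinear at the common rate $\ln(n)^{\alpha/4}/n^{\alpha/4}$, so the unrotated $(\delta,\alpha)$-block is singular in the limit; the matrix $v_n(\theta_0)$ is precisely the change of coordinates that removes this degeneracy, and the $\ln(\ln n)$ shift is what is required for the transformed block to converge to the non-singular limit in $I(\theta_0)$. This correction only becomes visible once $\partial_\delta p_{1/n}$ and $\partial_\alpha p_{1/n}$ are expanded beyond their leading order and a delicate cancellation is tracked, which is exactly the point where the expansions of Section \ref{S:Section2} must be pushed to a higher order; everything else — the i.i.d. reduction, centering, and the Lyapunov condition — is routine once those sharp expansions are in hand.
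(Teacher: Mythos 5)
Your proposal follows essentially the same route as the paper: the i.i.d. decomposition of the normalized score, centering via the score identity, convergence of the normalized covariance to $I(\theta_0)$ using the sharp expansions of Section \ref{S:Section2} (the paper's Proposition \ref{L:tout}, where the $v_n$-rotation and the $\ln(\ln n)$ shift produce the non-singular limit exactly as you describe), and a Lyapunov condition followed by the Lindeberg--Feller theorem. The only cosmetic difference is that the paper takes third moments ($\varepsilon=1$) in the Lyapunov step, so your plan is correct and matches the paper's proof.
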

Next we study the convergence of the Hessian matrix  $J_n(\theta)$ and prove a uniform law of large numbers, where the uniformity is obtained for $\theta \in V_n^{(r)}$ the neighborhood of $\theta_0$ 
defined for $r>0$ by
\begin{equation}\label{def:V}
    V_n^{(r)} = \{ \theta=(\sigma,\delta, \alpha) ; || u_n(\theta_0)^{-1}(\theta - \theta_0) ||\leq r \}.
\end{equation}

\begin{prop} \label{Th:CvIn}
We have
    \begin{equation} \label{E:CvIn}
    u_n(\theta_0)^T J_n(\theta_0) u_n(\theta_0) \xrightarrow[]{\PP^{\theta_0}} -I(\theta_0),
\end{equation}
and $\forall r>0$, we have the convergence in probability
    \begin{equation} \label{E:CvUnif} 
    \sup_{\theta \in V_n^{(r)} } ||u_n(\theta_0)^T (J_n(\theta) - J_n(\theta_0)) u_n(\theta_0)|| \xrightarrow[]{\PP^{\theta_0}} 0. 
\end{equation}
\end{prop}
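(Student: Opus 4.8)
The plan is to exploit the i.i.d.\ structure of the increments $(\Delta_i^n Y)_{0\le i\le n-1}$ together with the classical identity relating the Hessian of a log-density to its score. Writing $\ell(y,\theta)=\ln p_{1/n}(y,\theta)$, each increment satisfies the pointwise decomposition
$$\nabla_\theta^2 \ell(y,\theta_0) = \frac{\nabla_\theta^2 p_{1/n}(y,\theta_0)}{p_{1/n}(y,\theta_0)} - \nabla_\theta \ell(y,\theta_0)\,\nabla_\theta \ell(y,\theta_0)^T,$$
so that, summing over $i$ and conjugating by $u_n=u_n(\theta_0)$,
$$u_n^T J_n(\theta_0) u_n = \sum_{i=0}^{n-1} u_n^T \frac{\nabla_\theta^2 p_{1/n}}{p_{1/n}}(\Delta_i^n Y,\theta_0)\, u_n \;-\; \sum_{i=0}^{n-1} \bigl(u_n^T \nabla_\theta \ell(\Delta_i^n Y,\theta_0)\bigr)\bigl(u_n^T \nabla_\theta \ell(\Delta_i^n Y,\theta_0)\bigr)^T.$$
I would establish \eqref{E:CvIn} by applying to each sum a weak law of large numbers for i.i.d.\ triangular arrays, controlling in every case the expectation and the variance via the density expansions of Section \ref{S:Section2}.

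For the second sum, its expectation equals $n\,u_n^T \E^{\theta_0}[\nabla_\theta\ell\,\nabla_\theta\ell^T]\, u_n$, which is exactly the limiting covariance already computed in the proof of Proposition \ref{Th:TCL} and therefore converges to $I(\theta_0)$. Its variance is controlled by summing $n$ i.i.d.\ contributions: each term $(u_n^T \nabla_\theta\ell)(u_n^T \nabla_\theta\ell)^T$ is of order $1/n$ with variance of order $1/n^2$ --- the required fourth-moment bounds on the rescaled score coming again from Section \ref{S:Section2} --- so the variance of the sum is $O(1/n)\to 0$, and the second sum converges in probability to $I(\theta_0)$. For the first sum, differentiating the normalization $\int p_{1/n}(y,\theta)\,dy=1$ twice under the integral sign --- legitimated by the integrable bounds on $\nabla_\theta p_{1/n}$ and $\nabla_\theta^2 p_{1/n}$ of Section \ref{S:Section2} --- gives $\int \nabla_\theta^2 p_{1/n}(y,\theta_0)\,dy=0$, so each conjugated term has mean zero; the same variance estimate shows this sum converges to $0$ in probability. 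Combining the two contributions yields $u_n^T J_n(\theta_0) u_n \xrightarrow[]{\PP^{\theta_0}} 0-I(\theta_0)=-I(\theta_0)$.

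For the uniform statement \eqref{E:CvUnif}, the natural move is to reparametrize the shrinking neighborhood by $\theta=\theta_0+u_n(\theta_0)\eta$ with $\norme{\eta}\le r$, which is precisely the description of $V_n^{(r)}$, and to set $\Phi_n(\eta)=u_n^T J_n(\theta_0+u_n\eta)\, u_n$. By the fundamental theorem of calculus in $\eta$,
$$\sup_{\norme{\eta}\le r} \norme{\Phi_n(\eta)-\Phi_n(0)} \le r \sup_{\norme{\eta}\le r} \norme{\nabla_\eta \Phi_n(\eta)},$$
so it suffices to show that the $\eta$-gradient of the rescaled Hessian is $o_{\PP^{\theta_0}}(1)$ uniformly over the ball. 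Since each derivative in $\eta$ produces an extra factor of $u_n$, $\nabla_\eta\Phi_n(\eta)$ is a sum of $n$ i.i.d.-type terms, each a third-order derivative $\partial_a\nabla_\theta^2\ell$ conjugated by three factors of $u_n$. The heuristic is transparent: the two-fold rescaling of the second derivatives already produces the $O(1)$ Fisher information in \eqref{E:CvIn}, so the third factor of $u_n$ supplies one further vanishing power of the rate, forcing the supremum to $0$ --- provided the third-order derivatives admit moment bounds that are uniform over $V_n^{(r)}$.

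I expect the main obstacle to lie precisely in this last point: obtaining moment bounds on the third-order $\theta$-derivatives of $\ln p_{1/n}$ that are uniform over the shrinking neighborhood $V_n^{(r)}$ and compatible with the non-diagonal rate $u_n$. Because the derivatives in the $\delta$ and $\alpha$ directions scale differently and their leading terms are nearly collinear --- this being the very source of the singularity that $v_n$ is designed to remove --- one must verify that the delicate cancellations encoded in the $\ln(\ln n)$ correction of $v_n$ persist after one further differentiation, so that conjugation by $u_n$ genuinely gains a full power of the rate rather than leaving a logarithmically divergent remainder. Establishing the required uniform-in-$\theta$ expansions of $p_{1/n}$ and its derivatives up to third order, and checking that they survive the triple rescaling, is exactly where the sharp analysis of Section \ref{S:Section2} will be indispensable.
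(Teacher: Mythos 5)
Your outline follows the paper's proof in all essentials: for \eqref{E:CvIn} the paper also exploits the i.i.d.\ structure of the increments, computes the mean of the rescaled Hessian by differentiating under the integral sign (reducing it exactly to the covariance computation of Proposition \ref{Th:TCL}, see \eqref{eq:Espi}), and kills the fluctuations by a second-moment bound \eqref{eq:Espi2} coming from Lemma \ref{L:der2der3}; for \eqref{E:CvUnif} the paper likewise Taylor-expands the second derivatives in $\theta$ (see \eqref{eq:Taylord2}) and controls the third-order terms uniformly over $V_n^{(r)}$. The only cosmetic difference in the first part is that you split $\partial^2_{ab}\ln p_{1/n}$ into $\partial^2_{ab}p_{1/n}/p_{1/n}$ minus the score outer product, so your variance control requires fourth moments of the rescaled score instead of second moments of $\partial^2_{ab}\ln p_{1/n}$; both follow from \eqref{eq:majfghk} and the two routes are interchangeable.

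Where your roadmap misjudges the problem is the final paragraph. For \eqref{E:CvUnif} no cancellation involving the $\ln(\ln n)$ correction is needed, and none is tracked at third order: the paper bounds all third derivatives crudely, with $\ln(n)^p$ prefactors (Lemma \ref{L:der2der3}, \eqref{eq:boundder3}), because after combining the two rate factors from the conjugation, the parameter deviation \eqref{eq:majVois}, and the expectation of the third-derivative bound, a spare polynomial factor $n^{-\alpha_0/4}$ remains which dominates every power of $\ln(n)$. (The delicate $\ln(\ln n)$ compensations matter only for identifying the limit $-I(\theta_0)$, i.e.\ in Proposition \ref{L:tout}, which your plan correctly delegates to the proof of Proposition \ref{Th:TCL}.) The genuine obstacle you do not mention is that the supremum over $\theta\in V_n^{(r)}$ sits \emph{inside} the expectation: to bound $\E^{\theta_0}\sup_{\theta}$ one must dominate the $\theta$-dependent bound by $\theta_0$-quantities, and the Gaussian part of the density degrades, since one only has $\inf_{\theta\in V_n^{(r)}}\phi(\frac{\sqrt{n}}{\sigma}y)\geq\phi((1+\eta)\frac{\sqrt{n}}{\sigma_0}y)$ for $\eta>0$ (see \eqref{E:Bw} and \eqref{E:Bphi}). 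The resulting ratio $\phi(y)/\phi((1+\eta)y)=e^{(2\eta+\eta^2)y^2/2}$ blows up on the Gaussian-dominated region $|y|\lesssim\sqrt{(2-\alpha_0)\ln n}$, where it reaches size $n^{(2\eta+\eta^2)(2-\alpha_0)/2}$; the paper must split the integral at this threshold and choose $\varepsilon,\eta$ small enough that $(2\eta+\eta^2)(2-\alpha_0)/2<\alpha_0/4$, so that the spare factor wins. Your plan is viable, but executed as written it would stall at this density-comparison step rather than at the cancellation-tracking you anticipate.
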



From these propositions, we deduce easily the LAN Property with rate $u_n$.
\begin{thm}\label{Th:LAN}
The LAN Property holds at any parameter value $\theta_0 \in (0, \infty) \times (0, \infty) \times (0, 2)$
\begin{equation*}\label{eq:LAN}
    \sup_{h\in K} \left|\ln \left( \frac{L_n (\theta_0 + u_n(\theta_0) h)}{ L_n (\theta_0)} \right) - h^T \Delta_n(\theta_0) + \frac{1}{2}h^T I(\theta_0) h \right| \xrightarrow[]{\PP^{\theta_0}} 0,
\end{equation*}
for any compact set $K \subset \R^3$, where $\Delta_n(\theta_0) \xRightarrow[]{} \mathcal{N}(0, I(\theta_0))$ under $\PP^{\theta_0}$, and $I(\theta_0)$ is the positive definite matrix defined by \eqref{eq:I}.
\end{thm}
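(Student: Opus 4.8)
The plan is to obtain the LAN expansion from a second-order Taylor development of the log-likelihood along the segment joining $\theta_0$ to $\theta_0 + u_n(\theta_0)h$, feeding the score central limit theorem (Proposition~\ref{Th:TCL}) into the linear term and the Hessian convergence (Proposition~\ref{Th:CvIn}) into the quadratic remainder. Throughout I set $\Delta_n(\theta_0) := u_n(\theta_0)^T G_n(\theta_0)$, so that $\Delta_n(\theta_0) \xRightarrow[]{} \mathcal{N}(0, I(\theta_0))$ under $\PP^{\theta_0}$ is exactly the content of Proposition~\ref{Th:TCL}.

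First, fix a compact $K \subset \R^3$ and put $r := \sup_{h \in K}\|h\|$, which is finite. Since every entry of $u_n(\theta_0)$ tends to $0$ (the polynomial factor $n^{-\alpha/4}$ dominates the logarithmic factors in the off-diagonal term of $v_n(\theta_0)$ in \eqref{eq:un}), for $n$ large enough the whole family of points $\theta_0 + t\, u_n(\theta_0) h$, $t\in[0,1]$, $h\in K$, lies in the open parameter set $(0,\infty)\times(0,\infty)\times(0,2)$, so that $\ln L_n$ is well-defined and twice differentiable there by the smoothness of the density $p_{1/n}$ established in Section~\ref{S:Section2}. For such $h$ and $n$, applying Taylor's formula with integral remainder to $g(t) = \ln L_n(\theta_0 + t\, u_n(\theta_0)h)$, whose derivatives are $g'(t) = (u_n h)^T G_n(\theta_0 + t u_n h)$ and $g''(t) = (u_n h)^T J_n(\theta_0 + t u_n h)(u_n h)$ by \eqref{def:Gn}, yields, abbreviating $u_n = u_n(\theta_0)$,
$$\ln\frac{L_n(\theta_0 + u_n h)}{L_n(\theta_0)} = h^T \Delta_n(\theta_0) + \int_0^1 (1-t)\, h^T\, u_n^T J_n(\theta_0 + t u_n h)\, u_n\, h\, dt.$$

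The key observation is geometric: for $\theta = \theta_0 + t\, u_n(\theta_0) h$ one has $u_n(\theta_0)^{-1}(\theta - \theta_0) = t h$, whose norm is $t\|h\| \le \|h\| \le r$, so by \eqref{def:V} every such $\theta$ belongs to the neighborhood $V_n^{(r)}$, and Proposition~\ref{Th:CvIn} applies along the entire integration path. Writing $R_n := \sup_{\theta \in V_n^{(r)}} \|u_n^T J_n(\theta) u_n + I(\theta_0)\|$ and splitting $u_n^T J_n(\theta) u_n + I(\theta_0) = \bigl(u_n^T J_n(\theta_0) u_n + I(\theta_0)\bigr) + u_n^T\bigl(J_n(\theta) - J_n(\theta_0)\bigr) u_n$, the first summand tends to $0$ in $\PP^{\theta_0}$-probability by \eqref{E:CvIn} while the supremum of the second tends to $0$ by \eqref{E:CvUnif}, whence $R_n \xrightarrow{\PP^{\theta_0}} 0$. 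Since $\int_0^1 (1-t)\, dt = 1/2$, subtracting $h^T\Delta_n(\theta_0)$ and adding $\tfrac12 h^T I(\theta_0) h = \int_0^1 (1-t)\, h^T I(\theta_0) h\, dt$ turns the quantity in the statement into $\int_0^1 (1-t)\, h^T\bigl[u_n^T J_n(\theta_0 + t u_n h) u_n + I(\theta_0)\bigr] h\, dt$, which is bounded in absolute value by $\tfrac12 \|h\|^2 R_n \le \tfrac12 r^2 R_n$ uniformly over $h \in K$. Taking the supremum over $K$ then shows that the left-hand side of the theorem is dominated by $\tfrac12 r^2 R_n \xrightarrow{\PP^{\theta_0}} 0$, which is the claim.

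The only genuinely substantive ingredient is the uniform law of large numbers \eqref{E:CvUnif}: pointwise convergence of the Hessian at $\theta_0$ alone would not control $J_n$ at the shifted points $\theta_0 + t u_n h$, and it is precisely the shrinking neighborhood $V_n^{(r)}$ adapted to the non-diagonal rate $u_n(\theta_0)$ of \eqref{eq:un} that renders the whole Taylor path admissible. The remaining care is routine: verifying twice-differentiability of $\theta \mapsto \ln p_{1/n}$ under the integral sign and the membership $\theta_0 + t u_n h \in (0,\infty)\times(0,\infty)\times(0,2)$ for large $n$, both of which follow from the density estimates of Section~\ref{S:Section2}.
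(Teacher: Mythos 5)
Your proof is correct and follows essentially the same route as the paper: a Taylor expansion of $\ln L_n$ along the segment from $\theta_0$ to $\theta_0+u_n(\theta_0)h$, with the score term handled by Proposition~\ref{Th:TCL} and the integral remainder controlled by combining \eqref{E:CvIn} with the uniform convergence \eqref{E:CvUnif} over $V_n^{(r)}$, after observing that the whole Taylor path lies in $V_n^{(r)}$ for $r=\sup_{h\in K}\|h\|$. The only difference is cosmetic (the paper isolates the remainder as $J_n(\theta_0+tu_nh)-J_n(\theta_0)$ before invoking the two propositions, whereas you bound $u_n^TJ_n(\theta)u_n+I(\theta_0)$ directly), and your write-up in fact makes explicit the geometric membership argument that the paper leaves implicit.
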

\begin{proof}[ Proof of Theorem \ref{Th:LAN}]
   From Taylor's formula,  we have
$$\ln L_n (\theta_0 + u_n(\theta_0) h) = \ln L_n (\theta_0) + h^T u_n(\theta_0)^T  G_n(\theta_0) + \frac{1}{2} h^T u_n(\theta_0)^T J_n(\theta_0) u_n(\theta_0) h + R_n(h), $$
with $G_n$ and $J_n$ defined by \eqref{def:Gn} and
$$R_n(h) = h^T u_n(\theta_0)^T \int_0^1 (1-t) \big( J_n(\theta_0+t u_n(\theta_0) h) - J_n(\theta_0)\big) dt \; u_n(\theta_0) h.$$
We deduce then the result of Theorem \ref{Th:LAN} from
 Proposition \ref{Th:TCL} and  Proposition \ref{Th:CvIn}.
\end{proof}

\begin{rem}
If we assume that $\delta$ is known, then we can prove the LAN Property for the estimation of $(\sigma, \alpha)$ with rate 
$u_n = \text{diag}(1/ \sqrt{n},\ln (n)^{\alpha/4}/(\ln(n)n^{\alpha/4} ))$  and information
$\text{diag}(2/ \sigma^2,   \kappa_0(\theta) /4)$.

In the same way, if $\alpha$ is known, the LAN Property holds for the estimation of $(\sigma, \delta)$ with rate
$u_n = \text{diag}(1/ \sqrt{n},\ln (n)^{\alpha/4}/n^{\alpha/4} )$  and information
$\text{diag}(2/ \sigma^2,   \kappa_0(\theta) \alpha^2/\delta^2)$.
\end{rem}

\begin{rem}
The LAN Property established in Theorem \ref{Th:LAN} addresses the conjecture raised by Mies in \cite{Mies} for the joint estimation of $(\delta, \alpha)$ and shows that the moment estimator proposed in \cite{Mies} (with the choice of the tuning parameter $u_n$ of order $\sqrt{n}/ \ln(n)$) is rate optimal.
\end{rem}
A direct consequence of Theorem \ref{Th:LAN} is the behavior of the maximum likelihood estimator, which follows from Theorem 2 in Sweeting \cite{Sweeting}.

\begin{coro}\label{Th:Estimator}
    There exists a local maximum $\hat{\theta}_n = (\hat{\sigma}_n, \hat{\delta}_n, \hat{\alpha}_n)$ of the log-likelihood function, with probability tending to 1, which converges in probability to $(\sigma_0, \delta_0, \alpha_0)$. Moreover, we have the convergence in law under $\PP^{\theta_0}$
    \begin{equation*}
    u_n(\theta_0)^{-1} (\hat{\theta}_n - \theta_0) \xRightarrow[]{} \mathcal{N}(0,I(\theta_0)^{-1}),
    \end{equation*}
    with $I(\theta_0)$ defined in \eqref{eq:I}.
\end{coro}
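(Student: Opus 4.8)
The plan is to obtain Corollary \ref{Th:Estimator} as an application of Theorem 2 in Sweeting \cite{Sweeting}, whose hypotheses are precisely the content of Propositions \ref{Th:TCL} and \ref{Th:CvIn} together with the smoothness of the likelihood. Sweeting's framework is tailored to exactly this situation, where a (possibly non-diagonal) matrix normalization $u_n(\theta_0)$ turns the score and Hessian into well-behaved limits; so the bulk of the argument is to transcribe our three convergences into his conditions. First I would record the differentiability prerequisites: since $p_{1/n}(\cdot,\theta)$ is the convolution \eqref{def pi} of a Gaussian and a stable density, the expansions of Section \ref{S:Section2} guarantee that $\theta\mapsto \ln p_{1/n}(y,\theta)$ is twice continuously differentiable, so that $G_n$ and $J_n$ in \eqref{def:Gn} are well defined and $\ln L_n$ is $C^2$ on the parameter set. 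This supplies the regularity hypothesis.

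Next I would feed in the three asymptotic statements. The asymptotic normality of the normalized score, $u_n(\theta_0)^T G_n(\theta_0)\Rightarrow \mathcal{N}(0,I(\theta_0))$, is Proposition \ref{Th:TCL}. The convergence of the normalized Hessian to a nonrandom limit, namely \eqref{E:CvIn}, together with the positive definiteness of $I(\theta_0)$ in \eqref{eq:I}, gives an invertible negative definite limit. Finally, the stability of the Hessian over the $u_n(\theta_0)$-shrinking balls $V_n^{(r)}$ of \eqref{def:V}, i.e.\ the uniform bound \eqref{E:CvUnif}, is exactly the local uniform continuity condition Sweeting requires to control the remainder of the quadratic expansion on a neighborhood whose radius is calibrated to the rate. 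With these three ingredients, Sweeting's Theorem 2 yields, with probability tending to one, a local maximizer $\hat{\theta}_n$ of $\ln L_n$ that is consistent for $\theta_0$.

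The asymptotic law then follows from the usual one-term Taylor argument, which I would make rigorous on the event $\{\hat{\theta}_n\in V_n^{(r)}\}$ (whose probability tends to one). From $G_n(\hat{\theta}_n)=0$ one writes $G_n(\theta_0)+J_n(\bar{\theta}_n)(\hat{\theta}_n-\theta_0)=0$ for an intermediate $\bar{\theta}_n$, and inserting the normalization with the identity $(u_n(\theta_0)^T J_n u_n(\theta_0))^{-1}=u_n(\theta_0)^{-1}J_n^{-1}(u_n(\theta_0)^T)^{-1}$ gives
$$ u_n(\theta_0)^{-1}(\hat{\theta}_n-\theta_0) = -\bigl(u_n(\theta_0)^T J_n(\bar{\theta}_n)\, u_n(\theta_0)\bigr)^{-1}\, u_n(\theta_0)^T G_n(\theta_0). $$
By \eqref{E:CvUnif} the normalized Hessian at $\bar{\theta}_n$ shares the limit $-I(\theta_0)$ of the one at $\theta_0$, so the bracketed matrix converges in probability to $-I(\theta_0)$; combined with Proposition \ref{Th:TCL} and Slutsky's lemma, the right-hand side converges in law to $I(\theta_0)^{-1}\mathcal{N}(0,I(\theta_0))=\mathcal{N}(0,I(\theta_0)^{-1})$, which is the claim.

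I expect the only genuinely delicate point to be bookkeeping rather than analysis: one must ensure that the $\theta$-dependent normalization $u_n(\theta)$ remains comparable to the fixed $u_n(\theta_0)$ on $V_n^{(r)}$, so that the intermediate point $\bar{\theta}_n$ is controlled and the $\ln(n)$ and $n^{\alpha/4}$ factors entering the non-diagonal correction $v_n$ do not drift apart under the perturbation. This comparability is precisely what the uniform statement \eqref{E:CvUnif} is engineered to provide, and everything else is a direct transcription of Propositions \ref{Th:TCL} and \ref{Th:CvIn} into the hypotheses of Sweeting's Theorem 2.
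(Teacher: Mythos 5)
Your proposal is correct and follows exactly the paper's route: the paper proves Corollary \ref{Th:Estimator} simply by invoking Theorem 2 of Sweeting \cite{Sweeting}, whose hypotheses are supplied by Propositions \ref{Th:TCL} and \ref{Th:CvIn} (score CLT, Hessian convergence, and uniform control on $V_n^{(r)}$). Your additional spelling-out of the Taylor/Slutsky step for the limit law is a sound elaboration of what the citation to Sweeting encapsulates, not a different argument.
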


\section{Efficient estimation of a stochastic equation driven by a Brownian Motion and a locally stable L\'evy process} \label{S:effiSDE}

We now consider the class of stochastic one-dimensional equations  
\begin{equation} \label{E:EDS}
X_t=x_0+ \int_0^t b(X_s) ds + \int_0^t a(X_{s}, \sigma) d B_s + \delta \int_0^t c(X_{s-})d L_s^{\alpha}
\end{equation}
where $(B_t)_{t \geq 0} $ is a standard Brownian Motion, $(L_t^{\alpha})_{t \geq 0}$ is a pure jump locally $\alpha$-stable process, independent of $(B_t)_{t \geq 0}$. Both processes are  defined on a filtered space $(\Omega, \mathcal{F}, (\mathcal{F}_t)_{t \geq 0}, \mathbb{P})$, where $\mathcal{F}_t$ is the natural augmented filtration. We observe the discrete time process $(X_{t_i})_{0 \leq i \leq n}$ with $t_i=i /n$, for $i=0, \ldots ,n$  that solves \eqref{E:EDS} for the parameter value $\theta_0= (\sigma_0, \delta_0,\alpha_0)$ and
our aim is  to propose an efficient estimation method of the parameter $\theta_0$.

We make the following  assumptions on the coefficients $a$, $b$ and $c$ of the equation that ensure in particular that \eqref{E:EDS} admits a unique strong solution. We also specify the behavior of the L\'evy measure near zero of the process $(L_t^{\alpha})_{t \in [0,1]}$. 

\quad

\noindent
{\bf H1(Regularity)} 

 
 (a) Let $V_{\sigma_0} $ be a neighborhood of $\sigma_0$, we assume that $a$ is $\mathcal{C}^3$  on  $\mathbb{R} \times V_{\sigma_0} $, $b$ and $c$ are $\mathcal{C}^2$  on  $\mathbb{R}$,  and $a^{\prime}( \cdot, \sigma_0),$ $b^{\prime}$, $c^{\prime}$ are bounded.
 
(b) 
 $
\exists \overline{a}>0 $ and  $ \overline{c}>0$   such that $ \forall x \in \R$, $\forall \sigma \in  V_{\sigma_0}$,  $a(x, \sigma) \geq \overline{a}$ and $c(x) \geq \overline{c}$.

\noindent
{\bf H2 (L\'evy measure)} 

 Let $F^{\alpha, \tau}$ be the L\'evy measure of $(L_t^{\alpha})$. We assume that 
$$
F^{\alpha, \tau}(dz)= \frac{c_{\alpha} \tau(z)}{\abs{z}^{\alpha+1}} 1_{\mathbb{R}\setminus \{0\}}(z) dz,
$$
 where $\alpha \in (0,2)$, $c_{\alpha}$ is given by  \eqref{eq:calpha} and $\tau$ is  a real non-negative bounded even function with 
 $\tau(z)=1 + O(|z|)$ on $\{0< \abs{z} \leq \eta\}$ for some $\eta>0$.

To estimate the parameter, the main idea is to consider the Euler approximation of \eqref{E:EDS} without drift and next to replace the locally stable distribution by the stable distribution. Consequently the first approximation is to replace the increments $\Delta_i^nX $ by $\Delta_i^n\overline{X} $ given by
\begin{equation} \label{E:euler}
\Delta_i^n\overline{X} = a(X_{i/n}, \sigma_0)\Delta_i^nB + \delta_0c(X_{i/n},) \Delta_i^n L^{\alpha_0},
\end{equation}
and for the second approximation, we replace  the distribution of $\Delta_i^n L^{\alpha_0}$ by $n^{-1/ \alpha_0} S_1^{\alpha_0}$, so we finally approximate the distribution of  $\Delta_i^n X$ given $X_{i/n}=x$
by the distribution of $Y_{1/n}$ with
$$
Y_{1/n}=a(x, \sigma_0)n^{-1/2} B_1  + \delta_0 c(x) n^{-1/ \alpha_0} S_1^{\alpha_0},
$$
where $B_1$ is a standard Gaussian variable independent of $S_1^{\alpha_0}$, a stable variable with characteristic function \eqref{E:stable}.
Using the notation of Section \ref{S:LAN}, we will approach 
the transition density of $X_{(i+1)/n}$ given $X_{i/n}=x$ by the function
\begin{equation} \label{E:tildep}
p_{1/n}(y-x, \beta(x, \theta_0)), \; \text{with} \; \beta(x, \theta)=(a(x,\sigma),  \delta c(x), \alpha)
\end{equation}
where  $p_{1/n}$ is given by \eqref{def pi}. Considering the quasi-likelihood function
$$
\ln \tilde{L}_n(\theta)= \sum_{i=0}^{n-1} \ln p_{1/n}(\Delta_i^n X, \beta( X_{i/n}, \theta)),
$$
we estimate $\theta$ by solving the estimating equation
$\tilde{G}_n(\theta)=0$ with
\begin{equation} \label{E:tildeG}
\tilde{G}_n(\theta)= \nabla_{\theta}\ln  \tilde L_n(\theta).
\end{equation}
We prove  that the estimator has the optimal rate of convergence $u_n(\theta_0)$  defined by \eqref{eq:un}.
\begin{thm} \label{Th:estEDS}
We assume {\bf H1} and  {\bf H2}. Then there exists $\hat{\theta}_n$ that solves the estimating equation $\tilde{G}_n(\theta)=0$ with probability tending to 1. This estimator converges in probability to $\theta_0$, moreover we have the stable convergence in law with respect to $\sigma (L_s^{\alpha_0}, s\leq 1)$
$$
u_n^{-1}(\theta_0) ( \hat{\theta}_n- \theta_0) \xRightarrow{\mathcal{L}_s}  \overline{I}(\theta_0)^{-1/2} \mathcal{N},
$$
where $\mathcal{N}$ is a standard Gaussian variable independent of $\overline{I}(\theta_0)$ and
$$
\overline{I}(\theta)= \left(
\begin{array}{ccc}
2\int_0^1 \left(\frac{ \partial_{\sigma} a(X_s, \sigma)}{a(X_s, \sigma)}\right)^2ds & 0 & 0 \\
 0 & \frac{\alpha^2}{\delta^2} \int_0^1\overline{\kappa}_0(\theta, X_s )ds & \frac{\alpha}{\delta} \int_0^1\overline{\kappa}_0(\theta, X_s) \overline{\kappa}_1(\theta, X_s) ds\\
 0 &\frac{\alpha}{\delta}  \int_0^1 \overline{\kappa}_0(\theta, X_s) \overline{\kappa}_1(\theta, X_s) ds & \int_0^1 \overline{\kappa}_0(\theta, X_s)(  \overline{\kappa}_1(\theta, X_s)^2+\frac{1}{\alpha^2})ds
\end{array}
\right),
$$
with for $x \in \R$
\begin{align*}
\overline{\kappa}_0(\theta, x)=\frac{2 c_{\alpha}}{\alpha(2-\alpha)^{\alpha/2}}\left(\frac{\delta c(x)}{a(x, \sigma)} \right)^{\alpha},\\
\overline{\kappa}_1(\theta, x)= \ln \left(\frac{\delta c(x)}{a(x, \sigma)} \right) + \frac{ \partial_{\alpha} c_{\alpha}}{c_{\alpha}}-\frac{\ln(2- \alpha)}{2} -\frac{1}{\alpha}.
\end{align*}

\end{thm}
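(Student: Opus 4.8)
The plan is to treat $\hat\theta_n$ as a solution of the estimating equation $\tilde G_n(\theta)=0$ and to follow the standard two-ingredient recipe for $Z$-estimators: a stable central limit theorem for the normalized quasi-score $u_n(\theta_0)^T\tilde G_n(\theta_0)$, together with a law of large numbers for the normalized quasi-Hessian $u_n(\theta_0)^T\tilde J_n(\theta_0)u_n(\theta_0)$, where $\tilde J_n(\theta):=\nabla_\theta\tilde G_n(\theta)$, plus a uniform control of the latter over a shrinking neighborhood of $\theta_0$ of the type \eqref{def:V}. These are the exact analogues, for the stochastic equation, of Proposition \ref{Th:TCL} and Proposition \ref{Th:CvIn} proved for the prototype L\'evy process; the essential new difficulty is that the increments $\Delta_i^n X$ are neither exactly distributed as $Y_{1/n}$ nor independent, so the quasi-likelihood built from $p_{1/n}(\,\cdot\,,\beta(X_{i/n},\theta_0))$ is only an approximation of the true transition mechanism.

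First I would reduce the problem to the L\'evy-type computations of Section \ref{S:LAN}. Writing $\tilde G_n(\theta_0)=\sum_{i=0}^{n-1}\nabla_\theta\bigl[\ln p_{1/n}(\Delta_i^n X,\beta(X_{i/n},\theta_0))\bigr]$ and using the chain rule through $\beta(x,\theta)=(a(x,\sigma),\delta c(x),\alpha)$, each summand is, conditionally on $\mathcal F_{i/n}$, a transform of $\Delta_i^n X$. I would replace $\Delta_i^n X$ first by the drift-free Euler increment $\Delta_i^n\overline X$ of \eqref{E:euler} and then replace $\Delta_i^n L^{\alpha_0}$ by $n^{-1/\alpha_0}S_1^{\alpha_0}$, so that conditionally on $X_{i/n}=x$ the increment becomes exactly $Y_{1/n}$ with parameter $\beta(x,\theta_0)$. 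Under this exactly specified model the conditional mean of the score vanishes, since $\E_i[\nabla_\beta\ln p_{1/n}(Y_{1/n},\beta(x,\theta_0))]=\nabla_\beta\!\int p_{1/n}=0$, and its conditional second moment is governed by the single-increment information of Proposition \ref{Th:TCL}; transporting that information through $\nabla_\theta\beta$ and summing yields a Riemann sum converging to $\overline I(\theta_0)$, the path-dependent entries $\overline\kappa_0(\theta_0,X_s)$, $\overline\kappa_1(\theta_0,X_s)$ arising from the ratios $\delta_0 c(X_s)/a(X_s,\sigma_0)$. Because the limiting information is random (an integral along the path), I expect only stable convergence with respect to $\sigma(L_s^{\alpha_0},s\le1)$ and a mixed-normal limit, to be obtained from a triangular-array stable CLT for conditionally centered arrays by checking a Lyapunov-type conditional moment condition and the negligibility of the conditional jumps; the non-degeneracy $a\ge\overline a$, $c\ge\overline c$ from H1(b) keeps these ratios bounded away from $0$ and $\infty$ and makes the conditional moment bounds uniform.

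The quasi-Hessian is handled in parallel: after the same two reductions, $u_n(\theta_0)^T\tilde J_n(\theta_0)u_n(\theta_0)$ is a Riemann sum of conditional Hessians of $\ln p_{1/n}$ evaluated at $\beta(X_{i/n},\theta_0)$, converging in probability to $-\overline I(\theta_0)$ by the computations underlying Proposition \ref{Th:CvIn}, while the uniform-in-$\theta$ control over the neighborhood is obtained as in \eqref{E:CvUnif} using the $\mathcal C^3$ regularity of $a$ in $\sigma$ and the $\mathcal C^2$ regularity of $b,c$ from H1(a). Once both ingredients are in place, a Taylor expansion of $\tilde G_n$ around $\theta_0$ combined with the invertibility of $\overline I(\theta_0)$ gives, by the usual linearization of the estimating equation, the existence with probability tending to $1$ of a consistent root satisfying $u_n^{-1}(\theta_0)(\hat\theta_n-\theta_0)=-\bigl(u_n(\theta_0)^T\tilde J_n(\theta_0)u_n(\theta_0)\bigr)^{-1}u_n(\theta_0)^T\tilde G_n(\theta_0)+o_{\PP}(1)$, hence the announced mixed-normal stable limit $\overline I(\theta_0)^{-1/2}\mathcal N$.

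The main obstacle is to show that the two successive approximations leave the normalized quasi-score asymptotically unbiased, that is, that the summed conditional bias $\sum_i\E_i\bigl[u_n(\theta_0)^T\nabla_\theta\ln p_{1/n}(\Delta_i^n X,\beta(X_{i/n},\theta_0))\bigr]$ is $o_{\PP}(1)$. This is delicate because $u_n$ carries the slow factors $n^{\alpha/4}$ and $\ln(n)-\ln(\ln n)$, whereas the score is a functional of heavy-tailed stable increments, so the weak one-step error of the Euler scheme and the locally-stable-to-stable substitution must be controlled at a rate strictly better than the compensations encoded in $u_n$. This is precisely where the sharp one-step error bounds and the fine density expansions of the technical Section \ref{S:Section2}, extending Aït-Sahalia and Jacod \cite{AitSahalia2012}, are indispensable, and where the $\ln(\ln n)$ correction in $v_n(\theta_0)$ is needed to cancel otherwise non-negligible contributions in the $\alpha$-direction.
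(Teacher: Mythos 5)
Your overall architecture is the same as the paper's: reduce the quasi-score and quasi-Hessian to the L\'evy computations of Section \ref{S:LAN} by replacing $\Delta_i^n X$ with the drift-free Euler increment $\Delta_i^n\overline X$ (Proposition \ref{P:euler}) and then the locally stable increment with the stable one (Proposition \ref{P:TV}), exploit that the conditional score bias vanishes under the exact model $Y_{1/n}$, pass to Riemann sums for $\overline I(\theta_0)$, and conclude by the standard linearization of the estimating equation together with the uniform Hessian control on $V_n^{(r)}$. Two points are missing, one of them substantive.

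The substantive gap is in your stable CLT step. You propose to obtain the stable convergence of $u_n(\theta_0)^T\tilde G_n(\theta_0)$ by ``checking a Lyapunov-type conditional moment condition and the negligibility of the conditional jumps.'' Those conditions (the analogues of \eqref{E:TCL1}--\eqref{E:TCL3}) only give convergence in law of the triangular array; they do not give \emph{stable} convergence with respect to $\sigma(L_s^{\alpha_0},s\le1)$, which is what the theorem asserts and what is needed because the limiting information $\overline I(\theta_0)$ is random. The missing condition is the orthogonality requirement of Theorem IX.7.26 in \cite{JacodShiryaev}: one must show $\sum_{i}\E_i(\overline\zeta_i^{k,n}\Delta_i^n M)\xrightarrow{\PP}0$ for $M=B$ and for every square-integrable martingale $M$ orthogonal to $B$ (condition \eqref{E:TCL4} in the paper). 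Verifying it is the longest part of the paper's proof and requires genuinely different tools from the moment estimates you list: for $M=B$ one uses that the score functions $g_\alpha/f_\alpha$, $h_\alpha/f_\alpha$, $l_\alpha/f_\alpha$ are even, plus a H\"older argument to handle the $\ln(n)^{\alpha_0/4}/n^{\alpha_0/4}$ normalization; for $M\perp B$ one invokes the martingale representation theorem with respect to the compensated Poisson measure of $L^{\alpha_0}$, It\^o's formula for products of stochastic integrals, and a truncation of the integrand $G_s$ at a level $\epsilon_n=n^{-\alpha_0/8}$ to beat the $n^{\alpha_0/4}$ rate. Without this step your argument delivers only convergence in law, and the claimed mixed-normal limit $\overline I(\theta_0)^{-1/2}\mathcal N$ with $\mathcal N$ independent of $\overline I(\theta_0)$ is not justified. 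A second, more minor omission: Proposition \ref{P:euler} assumes bounded coefficients and bounded jump support, so before any of the reductions you must localize (the paper stops at $T_K=\inf\{t:|X_t|>K \text{ or } |\Delta L_t^{\alpha_0,\tau}|>K\}$); this is routine but cannot be skipped, since without it the one-step error bounds you rely on are not available.
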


\begin{rem}
Until now, no estimation method  for the jump parameters of a stochastic equation has achieved the optimal rate of convergence $v_n(\theta)\ln(n)^{\alpha/4}/n^{\alpha/4}$ (with $v_n$ defined by \eqref{eq:un}). A near optimal rate is obtained by Bull \cite{Bull}  for the estimation of $\alpha$ up to a factor $n^{\epsilon}$ with $\epsilon$ theoretically arbitrarily small. The moment method proposed in \cite{BC} for the stable Cox-Ingersoll-Ross model is a little bit faster with a loss in rate  of order $\ln(n)^{\epsilon} $. 
The result of Theorem \ref{Th:estEDS} shows that it is possible to construct an estimator of the jumps parameters $(\delta, \alpha)$ that attains the optimal rate of convergence $v_n(\theta)\ln(n)^{\alpha/4}/n^{\alpha/4}$. We also conjecture that this estimator is efficient.
\end{rem}

\begin{rem}
The asymptotic properties of the estimator of $(\delta, \alpha)$  depend crucially on the jump coefficient  of the stochastic equation. In \eqref{E:EDS}, the jump coefficient has the form $\delta c(x)$, so the parameter $\delta$ corresponds to a scaling parameter for the L\'evy process $(L_t^{\alpha})$ and it is not surprising in that case, that is the natural extension of \eqref{E:Levy}, to obtain the rate of convergence $u_n(\theta)$. 

Considering a more general jump coefficient $c(x, \delta)$ in the stochastic equation and assuming  that $s\mapsto (\partial_{\delta}c/c)(X_s, \delta_0)$ is almost surely non constant on $[0,1]$ (and also that $c$ is $\mathcal{C}^3$  on  $\mathbb{R} \times V_{\delta_0} $ for  $V_{\delta_0}$ a neighborhood of $\delta_0$ and that $ \forall x \in \R$, $\forall \delta \in  V_{\delta_0}$,  $c(x, \sigma) \geq \overline{c}$), then  the estimator $\hat{\theta}_n$ has a faster diagonal rate of convergence that  achieves the optimal rate in estimating marginally $\delta$ and $\alpha$. More precisely, we can prove the stable convergence
$$
\text{diag}( \sqrt{n}, n^{\alpha_0/4}/ \ln(n)^{\alpha_0/4}, n^{\alpha_0/4} \ln(n)/ \ln(n)^{\alpha_0/4}) ( \hat{\theta}_n- \theta_0) \xRightarrow{\mathcal{L}_s}  \tilde{I}(\theta_0)^{-1/2} \mathcal{N},
$$
where $\tilde{I}(\theta)$ is the non-singular matrix given by
$$
\tilde{I}(\theta)= \left(
\begin{array}{ccc}
2\int_0^1 \left(\frac{ \partial_{\sigma} a(X_s, \sigma)}{a(X_s, \sigma)}\right)^2ds & 0 & 0 \\
 0 & \alpha^2 \int_0^1 \frac{ \partial_{\delta}c}{c}(X_s, \delta)^2\tilde{\kappa}_0(\theta, X_s )ds & \frac{\alpha }{2}\int_0^1 \frac{ \partial_{\delta}c}{c}(X_s, \delta)\tilde{\kappa}_0(\theta, X_s) ds\\
 0 &\frac{\alpha}{2}  \int_0^1\frac{ \partial_{\delta}c}{c}(X_s, \delta) \tilde{\kappa}_0(\theta, X_s) ds & \frac{1}{4}\int_0^1 \tilde{\kappa}_0(\theta, X_s)ds
\end{array}
\right),
$$
and $\tilde{\kappa}_0(\theta, x)=\frac{2 c_{\alpha}}{\alpha(2-\alpha)^{\alpha/2}}\left(\frac{c(x, \delta)}{a(x, \sigma)} \right)^{\alpha}$. The proof of this result is omitted as it follows the same scheme that the proof of Theorem \ref{Th:estEDS},  with some simplifications (essentially for the last coordinate of the normalized score) that come from the faster rate of convergence. 
 \end{rem}

The proof of Theorem \ref{Th:estEDS} is based on the two following key approximation results that are interesting in themselves. The first proposition 
gives  a bound for the total variation distance between the rescaled locally stable distribution and the stable distribution and extend it to slow growing functions. This result is similar  to Theorem 4.2 in \cite{CG18} but  we obtain here a better bound with weaker assumptions (and without the use of Malliavin calculus).  We denote by   $d_{TV}(X,Y)$ the total variation distance between the distributions of the random variables $X$ and $Y$.

\begin{prop} \label{P:TV}
Let $(L_t^{\alpha})_{t \geq 0}$ be a locally stable process satisfying {\bf H2} and $(S_t^{\alpha})_{t \geq 0}$ be a stable process with characteristic function \eqref{E:stable}. 

(i) We have
$$
d_{TV}( n^{1/\alpha} L_{1/n}^{\alpha}, S_1^{\alpha}) \leq 
\left\{
\begin{array}{l}
C/n \; \text{if} \; \alpha<1,  \\
C\ln(n)/n\; \text{if} \; \alpha=1, \\
C/ n^{1/ \alpha} \; \text{if} \; \alpha >1. 
\end{array}
\right.
$$

(ii) Let $(g_n)_{n \geq 1}$ be a sequence of real functions satisfying
$$
\exists  \varepsilon \in (0,1/4), \; \exists  C>0 \; \text{such that} \;  \forall z \in \R, \;  \forall n, \; |g_n(z)| \leq C \ln(n)^p(1+ |z|^{\varepsilon \alpha}),
$$
then for any bounded sequence $(c_n)$
$$
|\E g_n(  B_1+ c_n n^{1/\alpha} L_{1/n}^{\alpha})-\E g_n(  B_1+ c_n S_1^{\alpha})| \leq
C (1/n^{1-2\varepsilon} \lor 1/n^{1/ \alpha - 2\varepsilon}),
$$
where $B_1$ is a standard Gaussian variable independent of $L_{1/n}^{\alpha}$ and $S_1^{\alpha}$.
\end{prop}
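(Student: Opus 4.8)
The plan is to attack both parts through the characteristic functions, exploiting that the only difference between the two laws lives in their Lévy exponents and that the evenness of $\tau$ kills all imaginary contributions. Using the scaling relation already recorded for $S^\alpha_{1/n}$ (and its analogue for $L^\alpha$), the characteristic function of $n^{1/\alpha}L_{1/n}^\alpha$ is $\exp(\frac1n\psi_L(n^{1/\alpha}z))$ with $\psi_L(\xi)=\int(\cos(\xi x)-1)\frac{c_\alpha\tau(x)}{|x|^{\alpha+1}}\,dx$, while that of $S_1^\alpha$ is $e^{-|z|^\alpha}=\exp(\psi_S(z))$. Since $\psi_S(n^{1/\alpha}z)=n\psi_S(z)$, the Fourier transform of the density difference factorises as
\[
\widehat{p_{U}-p_{S}}(z)=e^{-|z|^\alpha}\bigl(e^{\Delta_n(z)}-1\bigr),\qquad \Delta_n(z)=\frac1n\int(\cos(n^{1/\alpha}zx)-1)\frac{c_\alpha(\tau(x)-1)}{|x|^{\alpha+1}}\,dx,
\]
so the whole proof of (i) reduces to quantitative control of $\Delta_n$ and of its $z$-derivative.

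For the size of $\Delta_n$ I would split the $x$-integral according to the scale where $\cos(n^{1/\alpha}zx)-1$ switches from its quadratic behaviour to its bounded regime (i.e.\ at $|x|\sim(n^{1/\alpha}|z|)^{-1}$), and separately split $|x|\le\eta$, where $\tau(x)-1=O(|x|)$, from $|x|>\eta$, where $\tau-1$ is merely bounded. The three announced regimes come out of the near-zero integrability of $\frac{|\tau(x)-1|}{|x|^{\alpha+1}}\sim|x|^{-\alpha}$: it is integrable for $\alpha<1$ (giving the uniform bound $|\Delta_n(z)|\le C/n$), logarithmically divergent for $\alpha=1$ (producing the extra $\ln n$), and requires the quadratic gain from $\cos-1$ for $\alpha>1$ (giving $|\Delta_n(z)|\le C(|z|^{\alpha-1}n^{-1/\alpha}+n^{-1})$). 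To turn these exponent bounds into a total-variation estimate I would use $d_{TV}=\frac12\|p_U-p_S\|_{L^1}$ together with the weighted Cauchy--Schwarz/Plancherel inequality $\|f\|_{L^1}^2\le C\int(1+y^2)|f|^2\,dy=C'(\int|\hat f|^2+\int|\hat f'|^2)$; the stable factor $e^{-|z|^\alpha}$ guarantees convergence in $z$ and effectively selects the bounded-frequency range, so each of the three exponent bounds transfers to the claimed rates $C/n$, $C\ln(n)/n$ and $C/n^{1/\alpha}$.

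For (ii), adding the independent Gaussian $B_1$ multiplies the transform of the density difference by $e^{-z^2/2}$, which forces fast decay in $z$ and makes all frequency integrals (including those from derivatives) harmless, so the rate of (i) survives for the pure $L^1$ part. The only new difficulty is the polynomial growth $|g_n(y)|\le C\ln(n)^p(1+|y|^{\varepsilon\alpha})$. I would write the difference as $\int g_n\,(\phi*(q_n^U-q_n^V))$, bound it by $C\ln(n)^p\int(1+|y|^{\varepsilon\alpha})|D_n(y)|\,dy$ with $D_n$ the smoothed density difference, and split at a threshold $T_n$ taken as a power of $n$: on $\{|y|\le T_n\}$ the weight is at most $T_n^{\varepsilon\alpha}$ and the mass is controlled by the total-variation bound of (i) (smoothing only decreases it), while on $\{|y|>T_n\}$ I would use the polynomial stable-type tail bounds for both $B_1+c_nn^{1/\alpha}L_{1/n}^\alpha$ and $B_1+c_nS_1^\alpha$, uniformly in $n$, against the slow growth $|y|^{\varepsilon\alpha}$. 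Balancing the two contributions (roughly $T_n\sim n^{2/\alpha}$) yields exactly the loss factor $n^{2\varepsilon}$ and the stated bound $C(n^{-(1-2\varepsilon)}\vee n^{-(1/\alpha-2\varepsilon)})$.

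The hard part is the frequency analysis of $\Delta_n$ in (i): the quadratic/bounded split of $\cos-1$ must be combined with the $|x|\le\eta$ versus $|x|>\eta$ split for $\tau-1$, and, crucially, the derivative $\Delta_n'(z)$ involves the oscillatory tail integral $\int v\sin(n^{1/\alpha}zv)\frac{\tau(v)-1}{|v|^{\alpha+1}}\,dv$, which is not absolutely convergent for $\alpha\le1$; there one must extract the gain from the rapid oscillation (a Riemann--Lebesgue/stationary-phase factor of order $(n^{1/\alpha}|z|)^{\alpha-1}$) rather than rely on absolute bounds, using only the boundedness and evenness of $\tau$ granted by \textbf{H2} (no smoothness of $\tau$ away from $0$). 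Obtaining a clean \emph{rate} from this oscillation under such weak hypotheses is the delicate point. In (ii) the corresponding subtlety is that the density difference inherits heavy stable tails, which is precisely what obstructs a clean global weighted-$L^2$ estimate and forces the truncation argument, hence the unavoidable $n^{2\varepsilon}$ loss.
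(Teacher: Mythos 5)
Your part (ii) is essentially the paper's own argument---reduce the difference to $d_{TV}$ plus a weighted $L^1$ distance between the two densities, truncate at a polynomial threshold, use (i) on the bulk and uniform moments of order just below $\alpha$ (Luschgy--Pag\`es) on the tail---so it stands or falls with part (i), and part (i) is where the gap is. It sits exactly at the point you yourself call delicate: converting the characteristic-function estimate into a total-variation estimate. Your route needs $\|\hat f'\|_{L^2}$, i.e.\ a quantitative bound on
$$\Delta_n'(z)=-n^{1/\alpha-1}\int \sin(n^{1/\alpha}zx)\,\mathrm{sgn}(x)\,\frac{c_\alpha(\tau(x)-1)}{|x|^{\alpha}}\,dx,$$
and \textbf{H2} provides nothing to control this: away from the origin $\tau$ is only bounded, even and measurable. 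For $\alpha\le 1$ the tail integrand is not absolutely integrable; Riemann--Lebesgue is unavailable (it needs an $L^1$ integrand), and van der Corput or integration by parts needs monotonicity or smoothness of $\tau-1$, which is not assumed. The failure is not hypothetical: with $\tau(x)=1+\sin(\lambda_0|x|)$ for $|x|>\eta$ (admissible under \textbf{H2}), the exponent contains $\int_\eta^\infty \sin((\lambda_0-\mu)x)\,x^{-\alpha-1}dx$ with $\mu=n^{1/\alpha}z$, which behaves like $C_\alpha\,\mathrm{sgn}(\lambda_0-\mu)|\lambda_0-\mu|^\alpha$ near $\mu=\lambda_0$ with $C_\alpha\neq0$; for $\alpha<1$ this is a cusp with unbounded derivative, so $\Delta_n$ is not even differentiable there and the $H^1$ device collapses. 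Moreover, even in the regime $\alpha>1$ where $\Delta_n'$ exists absolutely, the only bound available without oscillation gains is $|\Delta_n'(z)|\le Cn^{1/\alpha-1}$ with no decay in $z$; inserted into $\|f\|_{L^1}^2\le C(\|\hat f\|_{L^2}^2+\|\hat f'\|_{L^2}^2)$ this yields $d_{TV}\lesssim n^{1/\alpha-1}$, strictly worse than the claimed $n^{-1/\alpha}$ for every $\alpha\in(1,2)$. So the frequency-side plan does not close under the stated hypotheses.

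The paper never goes to the Fourier side. It splits both L\'evy measures at $|z|=1$ and $|z|=\eta n^{1/\alpha}$, writes $n^{1/\alpha}L^{\alpha}_{1/n}$ and $S_1^{\alpha}$ as sums of three independent pieces, and bounds the three total-variation distances separately: Liese's Hellinger bound between infinitely divisible laws for the small-jump parts (the Hellinger distance between the L\'evy measures is $O(n^{-2/\alpha})$, using only $\tau=1+O(|z|)$ near $0$); the Mariucci--Rei{\ss} compound-Poisson bound for the intermediate parts (giving $C/n$, resp.\ $C\ln(n)/n$ at $\alpha=1$); and, for the largest jumps, the observation that with probability $1-C/n$ neither piece has any jump at all. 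This is why mere boundedness and measurability of $\tau$ away from the origin suffice there. To rescue a Fourier proof you would need extra regularity of $\tau$ away from $0$ (enough to integrate by parts in the oscillatory tail integral), which the proposition deliberately does not assume.
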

\begin{rem}
We deduce from this result that if $ |g_n(z)| \leq C \ln(n)^p(1+ \ln(1+|z|)^q)$ for some $p,q>0$ then 
$$
|\E g_n(  B_1+ c_n n^{1/\alpha} L_{1/n}^{\alpha})-\E g_n(  B_1+ c_n S_1^{\alpha})| \leq
C (1/n^{1-\varepsilon} \lor 1/n^{1/ \alpha - \varepsilon}),
$$
with $\varepsilon$ arbitrarily small. We prove in Section \ref{S:Section2} (Lemma  \ref{L:der2der3}) that this condition is  satisfied by the derivatives with 
respect to $\theta$ of $p_{1/n}$.  
\end{rem}
Our second result controls the weak and strong errors in approximating $\Delta_i^n X$ (where $(X_t)_{t\geq 0}$ solves \eqref{E:EDS}) by $\Delta_i^n \overline{X}$ given by \eqref{E:euler}. It is obtained under stronger assumptions on the coefficients of the stochastic equation and the L\'evy measure $F^{\alpha, \tau}$, that will be satisified after a localization procedure.
\begin{prop} \label{P:euler}
We assume {\bf H1} and {\bf H2}. We also assume that the functions $a$ and $c$ are bounded with bounded derivatives and that the support of the L\'evy measure of $L^{\alpha_0}$ is bounded. Let $(f_n)_{n \geq 1}$ be a sequence of  even $\mathcal{C}^2$ functions with polynomial growth and such that
$$
\exists p>0, \; \forall x \in \R, \;  \forall n, \;  |f^{\prime}_n(x)| + |f_n^{\prime \prime}(x)|  \leq C \ln(n)^p.
$$
Then $\forall \epsilon>0$, $\exists C_{\epsilon} $ such that for $0 \leq i \leq n-1$ 
\begin{eqnarray} \label{E:weak}
| \E^{\theta_0}_i f_n(\sqrt{n} \Delta_i^n X)- \E^{\theta_0}_i f_n(\sqrt{n} \Delta_i^n \overline{X}) | \leq 
\left\{
\begin{array}{l}
C_{\epsilon}/n^{1/ \alpha_0 - \epsilon}, \; \text{if} \; \alpha_0 >1, \\
C_{\epsilon}/n^{1 - \epsilon}, \; \text{if} \; \alpha_0 \leq 1,
\end{array}
\right.
\end{eqnarray}
and $\exists C $ such that for $0 \leq i \leq n-1$ 
\begin{eqnarray} \label{E:square}
 \E^{\theta_0}_i (|f_n(\sqrt{n} \Delta_i^n X)- f_n(\sqrt{n} \Delta_i^n \overline{X}) |^2) \leq C \ln(n)^{2p}/n.
\end{eqnarray}
\end{prop}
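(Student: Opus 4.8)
The plan is to fix $0\le i\le n-1$ and reduce to $i=0$: by the Markov property and the time-homogeneity of \eqref{E:EDS} it is enough, writing $x=X_{i/n}$ and letting $\E$ denote the conditional expectation given $X_0=x$ for the equation started at $x$, to bound the quantities uniformly in $x$. Set $a_0=a(x,\sigma_0)$, $c_0=c(x)$, $V=\sqrt n\,\Delta_0^nX$, $U=\sqrt n\,\Delta_0^n\overline X$ and
\[
R_n:=\Delta_0^nX-\Delta_0^n\overline X=\int_0^{1/n}b(X_s)\,ds+\int_0^{1/n}\bigl(a(X_s,\sigma_0)-a_0\bigr)\,dB_s+\delta_0\int_0^{1/n}\bigl(c(X_{s-})-c_0\bigr)\,dL_s^{\alpha_0}.
\]
The bounded support of the Lévy measure makes all moments of $L^{\alpha_0}$ finite, so $\int z^2F^{\alpha_0,\tau}(dz)<\infty$, every expectation below is finite and the polynomial growth of $f_n$ is harmless. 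The square estimate \eqref{E:square} then follows from the Lipschitz bound $|f_n(V)-f_n(U)|\le\|f_n'\|_\infty|V-U|\le C\ln(n)^p\sqrt n\,|R_n|$: this gives $\E|f_n(V)-f_n(U)|^2\le C\ln(n)^{2p}\,n\,\E(R_n^2)$, and $\E(R_n^2)\le C/n^2$ is obtained by treating the three terms of $R_n$ separately, using Itô's isometry for the Brownian integral, the isometry $\E\bigl(\int H\,dL^{\alpha_0}\bigr)^2=\int z^2F^{\alpha_0,\tau}(dz)\,\E\int H^2\,ds$ for the jump integral, the Lipschitz continuity of $a(\cdot,\sigma_0)$ and $c$, and the moment bound $\E|X_s-x|^2\le Cs$ for $s\le1/n$, which reduce each term to $\int_0^{1/n}Cs\,ds=O(1/n^2)$ (the drift term being directly $O(1/n^2)$, using boundedness of $b$ after localization).

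For the weak estimate \eqref{E:weak} I would Taylor expand to second order,
\[
f_n(V)-f_n(U)=f_n'(U)(V-U)+\tfrac12 f_n''(\xi_n)(V-U)^2,
\]
so that, since $(V-U)^2=n\,R_n^2$ and $|f_n''|\le C\ln(n)^p$, the quadratic remainder is $O(\ln(n)^p/n)$, which lies below both target rates. Everything then concentrates on the first-order term $\sqrt n\,\E[f_n'(U)R_n]$, and the crucial tool is a symmetry. Since $f_n$ is even, $f_n'$ is odd, whereas the law of $(B,L^{\alpha_0})$ on $[0,1/n]$ is invariant under the sign-flip $\Phi:(B,L^{\alpha_0})\mapsto(-B,-L^{\alpha_0})$ (both drivers are symmetric), under which $U\mapsto -U$; hence $\E[f_n'(U)Z]=0$ for every $\Phi$-invariant $Z$. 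Performing an Itô–Taylor expansion of the coefficients of $R_n$ around $x$ — using $a(X_s,\sigma_0)-a_0=a_0'(X_s-x)+O(|X_s-x|^2)$, the analogue for $c$, and $X_s-x=a_0B_s+\delta_0c_0L^{\alpha_0}_s+(\text{drift})+\cdots$ — the leading part of $R_n$ consists of the constant $b(x)/n$ and the four second-order iterated integrals $\int_0^{1/n}B_s\,dB_s$, $\int_0^{1/n}L^{\alpha_0}_s\,dB_s$, $\int_0^{1/n}B_{s-}\,dL^{\alpha_0}_s$ and $\int_0^{1/n}L^{\alpha_0}_{s-}\,dL^{\alpha_0}_s$. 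Each of these, and the constant, is $\Phi$-invariant, so all are annihilated against the odd factor $f_n'(U)$. This cancellation is the heart of the matter: it removes the otherwise dominant contribution of order $1/\sqrt n$.

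After this cancellation it remains to control $\sqrt n\,\E[f_n'(U)\,\rho_n]$ for the $\Phi$-antisymmetric remainder $\rho_n$, whose representative terms are the drift correction $\int_0^{1/n}(X_s-x)\,ds$, the third-order Brownian integral $\int_0^{1/n}(X_s-x)^2\,dB_s$ and the third-order jump integral $\int_0^{1/n}(X_{s-}-x)^2\,dL^{\alpha_0}_s$. The Brownian terms, and the drift correction after the rewriting $\int_0^{1/n}(X_s-x)\,ds=\int_0^{1/n}(1/n-s)\,d(X_s-x)$ (whose jump part is directly negligible), I would handle by a Gaussian integration by parts; because $\overline X$ has \emph{frozen} coefficients, $U$ depends on $B$ only through $B_{1/n}$, so its Malliavin derivative $D_sU=a_0\sqrt n$ is constant and
\[
\E\Bigl[f_n'(U)\!\int_0^{1/n}\!H_s\,dB_s\,\Big|\,L^{\alpha_0}\Bigr]=a_0\sqrt n\,\E\Bigl[f_n''(U)\!\int_0^{1/n}\!H_s\,ds\,\Big|\,L^{\alpha_0}\Bigr],
\]
so the extra time integral gains a full power of $1/n$ and these contributions are $O(\ln(n)^p/n)$. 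The genuine obstacle is the jump term $\sqrt n\,\delta_0\,\E\bigl[f_n'(U)\int_0^{1/n}(X_{s-}-x)^2\,dL^{\alpha_0}_s\bigr]$ and its analogues. Here the naive Cauchy–Schwarz bound only yields $O(1/\sqrt n)$, and — crucially — since $\E|X_s-x|^{2k}\le C_ks$ for \emph{every} $k$, the jump iterated integrals do not become smaller as the order of the expansion grows, so one cannot reach the rate by truncating further; the jump contribution must be treated globally, for instance through the backward Kolmogorov equation of the frozen semigroup, whose smoothing is quantified by the expansions of $p_{1/n}$ and its derivatives from Section~\ref{S:Section2}. The decay is then extracted by transferring a derivative onto the transition density, the gain being a fractional moment $\E|L^{\alpha_0}_{1/n}|^{q}\sim n^{-q/\alpha_0}$ that is finite only for $q<\alpha_0$; this is exactly the origin of both the exponent $1/\alpha_0$ (when $\alpha_0>1$) and of the arbitrarily small loss $\epsilon$, the resulting rate $n^{-1/\alpha_0+\epsilon}\vee n^{-1+\epsilon}$ coinciding with that of Proposition~\ref{P:TV}(ii) and reflecting the same analytic mechanism. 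I expect this jump integration by parts, with the accompanying bookkeeping of the Itô–Taylor remainders, to be the most delicate part of the argument.
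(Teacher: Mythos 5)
Your treatment of \eqref{E:square} (Lipschitz bound plus isometry estimates, using the bounded jump support) and of the second-order Taylor remainder coincides with the paper's, and your sign-flip symmetry argument is sound: $f_n'$ is odd, $U$ is odd under $(B,L^{\alpha_0})\mapsto(-B,-L^{\alpha_0})$, and by {\bf H2} the joint law is flip-invariant, so every flip-invariant functional — the constant $b(x)/n$ and the four second-order iterated integrals — is killed. This is in fact a cleaner cancellation than the paper's, which only uses oddness in the Brownian variable (conditionally on the jumps) to annihilate $\mu(2)_i^n=\sqrt n\,(a'a)(X_{i/n})\int_{i/n}^{(i+1)/n}(B_t-B_{i/n})dB_t+b(X_{i/n})/\sqrt n$, and treats every jump-driven term by direct moment bounds — no cancellation is needed there at all, which already hints that your difficulty with those terms is artificial.

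The genuine gap is exactly where you flag it: the jump-driven remainders, e.g.\ $\sqrt n\,\E\bigl[f_n'(U)\int_0^{1/n}O(|X_{s-}-x|^2)\,dL^{\alpha_0}_s\bigr]$, are never estimated. You correctly note that Cauchy--Schwarz only gives $O(1/\sqrt n)$, but your conclusion that no moment estimate can reach the rate, and that one must instead invoke smoothing of the frozen transition density via a backward Kolmogorov equation, is wrong — and that substitute argument is not carried out. The missing (and elementary) tool, which is how the paper proves \eqref{E:L1-2} and bounds its term $\mu(4)_i^n$, is a \emph{fractional} moment bound: for $q\in(1\vee\alpha_0,2]$, H\"older followed by Burkholder's inequality for compensated Poisson integrals (Lemma 2.1.5 in \cite{JacodProtter}) gives, after localization,
$$
\E_i\Bigl|\int_{i/n}^{(i+1)/n}H_{s-}\,dL^{\alpha_0}_s\Bigr|\leq C\Bigl(\E_i\int_{i/n}^{(i+1)/n}\int_{|z|\leq K}|H_{s}|^q|z|^q\,F^{\alpha_0,\tau}(dz)\,ds\Bigr)^{1/q},
$$
the whole point being that $\int_{|z|\leq K}|z|^qF^{\alpha_0,\tau}(dz)<\infty$ precisely because $q>\alpha_0$ (after localization all moments of $L^{\alpha_0}$ are finite, so the relevant constraint is $q>\alpha_0$, not the $q<\alpha_0$ you invoke). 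With $H_s=O(|X_s-x|^2)$ and $\E_i|X_s-x|^{2q}\leq Cs$ this yields $Cn^{-2/q}$, hence $\ln(n)^p\,n^{1/2-2/q}$ after multiplying by $\sqrt n\ln(n)^p$, which for $q$ close to $\alpha_0$ (resp.\ to $1$) is below $n^{-1/\alpha_0+\epsilon}$ (resp.\ $n^{-1+\epsilon}$) since $\alpha_0\leq2$; the first-order jump correction is handled identically with $\E_i|X_s-x|^{q}\leq Cs^{q/2}$, giving the rate $n^{-1/q}$ that is exactly \eqref{E:weak}. So your scheme can be completed, but as written its decisive estimate is missing, and the mechanism you propose in its place is both unsubstantiated and unnecessary.
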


\section{Asymptotic expansion for the density and its derivatives} \label{S:Section2}

This section is devoted to the study of the density $p_{1/n}$ (given by \eqref{def pi} as a convolution  between the Gaussian distribution and the stable distribution) and its derivatives with respect to the parameter $\theta$.
 We recall that $\phi$ is the density of $B_1$ (the standard Gaussian variable) and $\varphi_{\alpha}$ is the density of $S^{\alpha}_1$. From Section 14 in Sato \cite{Sato}, the map $(z, \alpha) \to \varphi_{\alpha}(z)$ defined on $\R \times (0,2)$ admits derivatives of any order with respect to both $z$ and $\alpha$.

\subsection{Auxiliary functions}
We first introduce some generic functions to simplify the expressions  of the score $G_n$ and the Hessian matrix $J_n$ defined by \eqref{def:Gn} and to make clearer their  asymptotic behavior.  

We start by calculating $\nabla_{\theta} p_{1/n}$, with $p_{1/n}$ defined by \eqref{def pi}. We have
\begin{align*}
    \partial_\sigma p_{1/n}(y, \theta)
    =& - \frac{\sqrt{n}}{\sigma^2} \int \phi\left(\frac{y - n^{-1/\alpha}\delta  z}{n^{-1/2} \sigma} \right) \varphi_{\alpha}(z) dz \\
    & \quad- \frac{\sqrt{n}}{\sigma}\int \left(\frac{y - n^{-1/\alpha}\delta  z}{n^{-1/2} \sigma^2} \right) \phi^{\prime} \left(\frac{y - n^{-1/\alpha}\delta  z}{n^{-1/2} \sigma} \right) \varphi_{\alpha}(z) dz \\
    =& - \frac{\sqrt{n}}{\sigma^2} \int (x\phi)^{\prime} \left(\frac{y - n^{-1/\alpha}\delta  z}{n^{-1/2} \sigma} \right) \varphi_{\alpha}(z) dz.
\end{align*}
We get after integrating by parts 
\begin{align*}
    \partial_\delta p_{1/n}(y, \theta)&
    =-\frac{\sqrt{n}}{\sigma} \int \frac{ n^{-1/\alpha}z}{n^{-1/2} \sigma} \phi'\left(\frac{y - n^{-1/\alpha}\delta  z}{n^{-1/2} \sigma} \right) \varphi_{\alpha}(z) dz\\
    &=- \frac{\sqrt{n}}{\sigma \delta} \int \phi\left(\frac{y - n^{-1/\alpha}\delta  z}{n^{-1/2} \sigma} \right) (z\varphi_{\alpha})'(z) dz,
\end{align*}
and we similarly obtain 
$$\partial_\alpha p_{1/n}(y, \theta)
    =\frac{\sqrt{n}}{\sigma} \int \phi \left(\frac{y - n^{-1/\alpha}\delta  z}{n^{-1/2} \sigma} \right) \left(\partial_\alpha \varphi_{\alpha}(z) - \frac{\ln(n)}{\alpha^2} (z\varphi_{\alpha})'(z) \right) dz.$$
In the same way, we can compute the second and third  order derivatives, and to simplify the notation it is convenient to introduce the operator
\begin{align}\label{def:D}
\mathcal{D} : \left\{ 
  \begin{array}{ l l }
    \mathcal{C}^{\infty} & \rightarrow \mathcal{C}^{\infty} \\
    f                 & \mapsto \mathcal{D}(f): \left\{ 
  \begin{array}{ l l }
    \R & \rightarrow \R \\
    y                 & \mapsto (yf)'(y)= f(y)+yf'(y).
  \end{array}
\right. 
  \end{array}
\right.
\end{align}
For $k \geq 2$, we denote by  $\mathcal{D}^{(k)}$ the operator iterated $k$-times defined by  $\mathcal{D}^{(k)}=\mathcal{D}(\mathcal{D}^{(k-1)})$, which  will appear in the derivatives of order $k$  of $p_{1/n}$. For $k=0$, $\mathcal{D}^{(0)}$ is the identity.
With this notation, we can simplify the expression of $\nabla_{\theta} p_{1/n}$.  By setting
\begin{align}\label{def:fghk} 
&f_{\alpha}(y, w) = \int \phi(y- w z) \varphi_{\alpha}(z) dz, &g_{\alpha}(y, w) = \int \mathcal{D}(\phi)(y- w z) \varphi_{\alpha}(z) dz,\\
    &h_{\alpha}(y, w) = \int \phi(y- w z) \mathcal{D}(\varphi_{\alpha})(z) dz, & k_{\alpha}(y, w) = \int \phi(y- w z) \partial_{\alpha}\varphi_{\alpha}(z) dz, \nonumber
\end{align}
and
\begin{equation} \label{E:wn}
 w_n(\theta) = n^{1/2 - 1/\alpha} \frac{\delta}{\sigma},
\end{equation}
we have
\begin{equation}\label{eq:lienpf}
    p_{1/n}(y, \theta) = \frac{\sqrt{n}}{\sigma}f_{\alpha}\Bigl(\frac{\sqrt{n}}{\sigma}y, w_n(\theta)\Bigr), 
 \end{equation}
and the score function can be written as
\begin{eqnarray} \label{eq:GnCalcul}
G_n(\theta)=  \sum_{i=0}^{n-1} \nabla_{\theta} \ln  p_{1/n}(\Delta _i^n Y, \theta) =-\sum_{i=0}^{n-1}\left(
\begin{array}{c}
    \frac{1}{\sigma} \frac{g_{\alpha}}{f_{\alpha}}\Bigl(\frac{\sqrt{n}}{\sigma} \Delta _i^n Y, w_n(\theta)\Bigr)\\
     \frac{1}{\delta} \frac{h_{\alpha}}{f_{\alpha}} \Bigl(\frac{\sqrt{n}}{\sigma} \Delta _i^n Y, w_n(\theta)\Bigr)  \\
    \frac{\frac{\ln(n)}{\alpha^2} h_{\alpha} - k_{\alpha}}{f_{\alpha}}\Bigl(\frac{\sqrt{n}}{\sigma} \Delta _i^n Y, w_n(\theta)\Bigr)
\end{array}
\right).
\end{eqnarray}
We now give some insights on the choice of the rate $u_n(\theta)$ given by \eqref{eq:un}. We  prove in Lemma \ref{l:approxD} the following approximations (where $\psi^{(0)}$ and $\psi^{(1)}$ are defined in \eqref{eq:fp})
\begin{align*}
    h_{\alpha}(y, w_n(\theta)) &\simeq -\alpha c_\alpha w_n(\theta)^\alpha \psi_{\alpha}^{(0)}(y),\\
    \big[\frac{\ln(n)}{\alpha^2} h_{\alpha} - k_{\alpha} \big](y, w_n(\theta)) &\simeq - \frac{c_\alpha}{2} \ln(n) w_n(\theta)^\alpha \psi_{\alpha}^{(0)}(y) 
    + c_\alpha w_n(\theta)^\alpha \psi_{\alpha}^{(1)}(y)
    \\& \qquad \qquad - [\partial_\alpha c_\alpha + \ln(\frac{\delta}{\sigma})]w_n(\theta)^\alpha \psi_{\alpha}^{(0)}(y).
\end{align*}
This makes clear the difference $\ln(n)$ in rate when estimating $\delta$ and $\alpha$ marginally. 
Now, considering the non-diagonal estimation rate
$$
u_n(\theta) = \begin{pmatrix}\frac{1}{\sqrt{n}} & 0 \\ 0 & \frac{\ln(n)^{\alpha/4}}{n^{\alpha/4}} v_n\end{pmatrix} \quad \text{ with } v_n = \begin{pmatrix}1 & c_n \\ 0 & 1 \end{pmatrix},$$
we deduce
\begin{eqnarray*}
u_n(\theta)^T  G_n(\theta)= \left(
\begin{array}{c}
-\frac{1}{\sqrt{n}} \sum_{i=0}^{n-1} \frac{1}{\sigma} \frac{g_{\alpha}}{f_{\alpha}}\Bigl(\frac{\sqrt{n}}{\sigma} \Delta _i^n Y, w_n(\theta)\Bigr) \\
-\frac{\ln(n)^{\alpha/4}}{n^{\alpha/4}} \sum_{i=0}^{n-1} \frac{1}{\delta} \frac{h_{\alpha}}{f_{\alpha}}\Bigl(\frac{\sqrt{n}}{\sigma} \Delta _i^n Y, w_n(\theta)\Bigr)\\
- \frac{\ln(n)^{\alpha/4}}{n^{\alpha/4}}\sum_{i=0}^{n-1} \frac{(\frac{\ln(n)}{\alpha^2} + \frac{c_n}{\delta}) h_{\alpha} - k_{\alpha}}{f_{\alpha}}\Bigl(\frac{\sqrt{n}}{\sigma} \Delta _i^n Y, w_n(\theta)\Bigr)
\end{array}
\right).
\end{eqnarray*}
From Lemma \ref{l:approxD} we have the approximation 
\begin{align*}
    \biggl[\bigl(\frac{\ln(n)}{\alpha^2}+\frac{c_n}{\delta}\bigr) h_{\alpha} - k_{\alpha}\biggr](y, w_n(\theta)) &\simeq - c_\alpha \bigl(\frac{ \ln(n)}{2} +\frac{\alpha}{\delta}c_n\bigr) w_n(\theta)^{\alpha} \psi_{\alpha}^{(0)}(y) 
    \\
    + c_{\alpha} & w_n(\theta)^{\alpha} \psi_{\alpha}^{(1)}(y)
    - [\partial_\alpha c_{\alpha} + \ln(\frac{\delta}{\sigma})]w_n(\theta)^{\alpha} \psi_{\alpha}^{(0)}(y),
\end{align*}
so choosing $c_n = -\frac{\delta}{2\alpha}\ln(n)$ allows to cancel out the term in $\ln(n) w_n(\theta)^{\alpha} \psi_{\alpha}^{(0)}$ in the last coordinate of $u_n(\theta)^T  G_n(\theta)$, but leads to a singular information matrix. To obtain a non-singular information we chose in
 \eqref{eq:un} $c_n =-\frac{\delta}{2 \alpha}[\ln(n) - \ln(\ln n)]$, where 
the term in $\ln(\ln n)$ provides a subtle partial compensation of the term in $\psi_{\alpha}^{(1)}$ when taking the expectation (the details of this compensation can be found in the proof of Proposition \ref{L:tout}).

We finally can  write the normalized score as
\begin{eqnarray} \label{E:scoren}
u_n(\theta)^T  G_n(\theta)= \left(
\begin{array}{c}
-\frac{1}{\sqrt{n}} \sum_{i=0}^{n-1} \frac{1}{\sigma} \frac{g_{\alpha}}{f_{\alpha}}\Bigl(\frac{\sqrt{n}}{\sigma} \Delta _i^n Y, w_n(\theta)\Bigr) \\
-\frac{\ln(n)^{\alpha/4}}{n^{\alpha/4}} \sum_{i=0}^{n-1} \frac{1}{\delta} \frac{h_{\alpha}}{f_{\alpha}}\Bigl(\frac{\sqrt{n}}{\sigma} \Delta _i^n Y, w_n(\theta)\Bigr)\\
 \frac{\ln(n)^{\alpha/4}}{n^{\alpha/4}}\sum_{i=0}^{n-1} \frac{l_{\alpha}}{f_{\alpha}}\Bigl(\frac{\sqrt{n}}{\sigma} \Delta _i^n Y, w_n(\theta)\Bigr)
\end{array}
\right)
\end{eqnarray}
by setting
\begin{equation}\label{def:l}
    l_{\alpha}(y, w_n(\theta)) = \left[\frac{1}{2 \alpha}\bigl(\ln(n) - \ln(\ln n)\bigr) - \frac{\ln(n)}{\alpha^2} \right] h_{\alpha}(y, w_n(\theta)) + k_{\alpha}(y, w_n(\theta)).
\end{equation}
In order to prove the convergence of $u_n(\theta_0)^T   G_n(\theta_0)$, we will give estimates for the functions $f_\alpha, g_\alpha, h_\alpha$ and $l_\alpha$ in Section \ref{S:oneder}, by extending the results stated  in the Supplement to Aït-Sahalia and Jacod \cite{AitSahalia2012}.

Turning to the Hessian matrix  $J_n$ given by in \eqref{def:Gn}, we have
$$ J_n(\theta) = \sum_{i=0}^{n-1}  \begin{pmatrix} \partial^2_{\sigma \sigma} \ln p_{1/n} & \partial^2_{ \delta \sigma} \ln p_{1/n} & \partial^2_{\alpha\sigma } \ln p_{1/n} \\
    \partial^2_{\sigma\delta } \ln p_{1/n} & \partial^2_{\delta \delta} \ln p_{1/n}& \partial^2_{\alpha\delta } \ln p_{1/n}\\ 
    \partial^2_{\sigma\alpha } \ln p_{1/n} & \partial^2_{\delta\alpha } \ln p_{1/n} & \partial^2_{\alpha \alpha}\ln p_{1/n} \end{pmatrix}(\Delta _i^n Y, \theta),$$
where for $a,b \in \{\sigma, \delta, \alpha \}$ $$\partial^2_{ab} \ln p_{1/n}=\frac{\partial^2_{a b} p_{1/n}  p_{1/n}  -\partial_{a} p_{1/n} \partial_{b} p_{1/n} }{(p_{1/n})^2}. $$
Consequently, the study of the convergence of $J_n$ requires some bounds on the second and third order derivatives of $p_{1/n}$.
Similarly to the functions defined in \eqref{def:fghk}, which naturally appear in the first derivatives of the density and in order to have some generic expressions for the higher order derivatives,  we introduce a larger set of functions built as a convolution between the density of $B_1$ and the density of $S_1^\alpha$. For $k, l, m \in \N$, we set
\begin{equation}\label{def:fklm}
    f_{\alpha}^{(k, l, m)}(y, w) = \int \mathcal{D}^{(k)}(\phi)(y- w z) \mathcal{D}^{(l)}(\partial^m_{\alpha^m} \varphi_{\alpha})(z) dz.
\end{equation}
From Schwarz's Theorem $(\partial_{\alpha}\varphi_{\alpha})'=\partial_{\alpha}(\varphi_{\alpha}')$ hence $\mathcal{D}(\partial_{\alpha} \varphi_{\alpha}) = \partial_{\alpha}(\mathcal{D}( \varphi_{\alpha}))$, and after iterating we get $\forall l,m \in \N$
$$ \mathcal{D}^{(l)}(\partial^m_{\alpha^m} \varphi_{\alpha}) = \partial^m_{\alpha^m}\mathcal{D}^{(l)}( \varphi_{\alpha}).$$
With this notation we have obviously (recalling \eqref{def:fghk})
$$
f_{\alpha}=f_{\alpha}^{(0, 0,0)}, \; g_{\alpha}=f_{\alpha}^{(1, 0,0)}, \; h_{\alpha}=f_{\alpha}^{(0, 1,0)}, \; k_{\alpha}=f_{\alpha}^{(0, 0,1)}.
$$
Moreover an explicit computation gives
$$\partial^2_{\sigma \sigma} \ln (p_{1/n}(y, \theta)) = \biggl[ \frac{2}{\sigma^2} \frac{f_{\alpha}^{(2, 0,0)}}{f_{\alpha}} +
\frac{1}{\sigma^2} \frac{f_{\alpha}^{(1, 0,0)}}{f_{\alpha}} 
- \frac{1}{\sigma^2} \Bigl(\frac{f_{\alpha}^{(1, 0,0)}}{f_{\alpha}}\Bigr)^2 \biggr] \Bigl(\frac{\sqrt{n}}{\sigma}y, w_n(\theta)\Bigr).$$
Similarly, for $a,b \in \{\sigma, \delta, \alpha \}$ and $(a,b)\neq(\sigma, \sigma)$, $\partial^2_{a b} \ln (p_{1/n}(y, \theta))$ can be written as a sum and product of 
$$c_{k,l,m}(\theta) \ln(n)^p \,\frac{f_{\alpha}^{(k, l,m)}}{f_{\alpha}}\Bigl(\frac{\sqrt{n}}{\sigma}y, w_n(\theta)\Bigr),$$ for $k \leq 1$, $l+m>0$, $l,m \leq 2$ and some coefficients $c_{k,l,m}(\theta)$ that we do not need to explicit. This highlights the connection between $J_n$ and the auxiliary
 functions $f_{\alpha}^{(k,l,m)}$.  We study more deeply these functions  in Section \ref{S:suporder}.

We end this section with some additional notation related to the asymptotic behavior of the stable density $\varphi_{\alpha}$ and its derivative. From
Section 14 in Sato \cite{Sato} we have the asymptotic expansions
\begin{align*}\varphi_\alpha(z) &\underset{z\to\infty}{=} \frac{c_\alpha}{|z|^{\alpha+1}} + O\Bigl(\frac{1}{|z|^{2\alpha+1}}\Bigr),\\
        \mathcal{D}(\varphi_\alpha)(z) &\underset{z\to\infty}{=} -\frac{ \alpha c_\alpha}{|z|^{\alpha+1}}+ O\Bigl(\frac{1}{|z|^{2\alpha+1}}\Bigr), \\
        \partial_\alpha \varphi_\alpha(z) &\underset{z\to\infty}{=} \frac{\partial_\alpha c_\alpha - c_\alpha\ln|z|}{|z|^{\alpha+1}}+ O\Bigl(\frac{\ln|z|}{|z|^{2\alpha+1}}\Bigr),
    \end{align*}
and more generally for $p \in \N$
    $$\partial^p_{\alpha^p} \varphi_{\alpha}(z) \underset{z\to\infty}{\sim} \frac{C(\ln|z|)^p}{|z|^{\alpha+1}}.$$
This motivates the definition of the functions $\psi_{\alpha}^{(p)}$, for $p \in \N$
\begin{equation}\label{eq:fp}
    \psi_{\alpha}^{(p)}(y) = \left\{ \begin{tabular}{l l}
1&if $|y|\leq$ 1\\
$\frac{ (\ln|y|)^p}{|y|^{\alpha +1}}$&if $|y|\geq 3$,
\end{tabular} \right. 
\end{equation}
with $ \psi_{\alpha}^{(p)}$ $\mathcal{C}^2$ non-negative and even on $\R$.
We also define for $k,l \in \N$
\begin{equation}\label{eq:Ical}
    \mathcal{I}_\alpha^{ (k,l)}(y) = \int_{|z|>1} \mathcal{D}^{(k)}(\phi)(y-z) \psi_{\alpha}^{(l)}(z)dz,
\end{equation}
which is  an extension of the function $D^{\theta, \alpha, l}$ defined in (A.4) of  the Supplement to Aït-Sahalia and Jacod \cite{AitSahalia2012}.

\subsection{First order derivatives} \label{S:oneder}

The goal of this section is to give estimates for the score function. 
The first lemma gives some estimates and bounds for $\mathcal{I}_\alpha^{ (k,l)}$ defined in \eqref{eq:Ical}.

\begin{lem}\label{L:I} For $k,l \in \N$, we have the asymptotic expansions
    \begin{equation}\label{eq:Icaleq}
    \mathcal{I}_{\alpha}^{(k, l)}(y)  \underset{y\to\infty}{=} \psi_{\alpha}^{(l)}(y) \biggl(\int\mathcal{D}^{(k)}(\phi)(z)dz+O\Bigl(\frac{1}{\sqrt{|y|}}\Bigr) \biggr),
\end{equation}
and 
\begin{equation}\label{eq:D}
\exists C,\; \forall |y|>1, \quad 
    \bigl|\mathcal{I}_{\alpha}^{(0, l)}(y)  - \psi_{\alpha}^{(l)}(y)\bigr|\leq C \frac{\psi_{\alpha}^{(l)}(y)}{\sqrt{|y|}}.
\end{equation}
For $k,l \in \N$ and for any compact set $K \subset (0,2),$ $\exists C>0, \; \forall \alpha \in K, \; \forall y$
\begin{equation}\label{eq:majIcal}
    |\mathcal{I}_{\alpha}^{(k, l)}(y)|\leq C \psi_{\alpha}^{(l)}(y), \qquad \frac{1}{C} \psi_{\alpha}^{(0)}(y)\leq \mathcal{I}_{\alpha}^{(0, 0)}(y) \leq C \psi_{\alpha}^{(0)}(y).
\end{equation}
\end{lem}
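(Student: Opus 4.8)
The plan is to exploit that $\mathcal{I}_\alpha^{(k,l)}$ is the convolution of the fast-decaying kernel $\mathcal{D}^{(k)}(\phi)$ against the slowly varying tail profile $\psi_\alpha^{(l)}$. First I would record the elementary properties of the kernel. Since $\phi'(y)=-y\phi(y)$, an immediate induction gives $\mathcal{D}^{(k)}(\phi)(y)=P_k(y)\phi(y)$ for some polynomial $P_k$, so $\mathcal{D}^{(k)}(\phi)$ has Gaussian decay and finite absolute moments of every order. It is moreover even, because $\mathcal{D}$ preserves parity ($f$ even $\Rightarrow (yf)'$ even), and $\int_\R\mathcal{D}^{(k)}(\phi)(u)\,du=\int_\R(u\,\mathcal{D}^{(k-1)}(\phi))'(u)\,du=0$ for $k\geq1$, while $\int_\R\mathcal{D}^{(0)}(\phi)=\int\phi=1$. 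By the same parity argument, substituting $z\mapsto-z$ and using that $\psi_\alpha^{(l)}$ is even shows that $\mathcal{I}_\alpha^{(k,l)}$ is even in $y$, so it suffices to treat $y\to+\infty$.

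Next I would establish the slow-variation estimates for $\psi_\alpha^{(l)}$, uniformly over $\alpha$ in a compact set $K\subset(0,2)$. From the explicit form $\psi_\alpha^{(l)}(y)=(\ln y)^l/y^{\alpha+1}$ on $\{y\geq3\}$ one reads off $|(\psi_\alpha^{(l)})'(y)|\leq C\,\psi_\alpha^{(l)}(y)/y$, and $\psi_\alpha^{(l)}$ is globally bounded (the only non-power piece sits on the compact transition zone). These yield, for $|u|\leq|y|/2$ and $|y|$ large, the comparison bounds $|\psi_\alpha^{(l)}(y-u)-\psi_\alpha^{(l)}(y)|\leq C\,|u|\,\psi_\alpha^{(l)}(y)/|y|$ and $c\,\psi_\alpha^{(l)}(y)\leq\psi_\alpha^{(l)}(y-u)\leq C\,\psi_\alpha^{(l)}(y)$, all uniform in $\alpha\in K$ since $\alpha+1\leq3$ controls the only $\alpha$-dependence.

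For \eqref{eq:Icaleq} I would substitute $u=y-z$ and split the integral at $|u|=\sqrt{y}$. On $\{|u|\leq\sqrt y\}$ the constraint $|z|>1$ is automatic and $|u|\leq y/2$, so decomposing $\psi_\alpha^{(l)}(y-u)=\psi_\alpha^{(l)}(y)+(\psi_\alpha^{(l)}(y-u)-\psi_\alpha^{(l)}(y))$ the first part equals $\psi_\alpha^{(l)}(y)\int_{|u|\leq\sqrt y}\mathcal{D}^{(k)}(\phi)(u)\,du=\psi_\alpha^{(l)}(y)\bigl(\int_\R\mathcal{D}^{(k)}(\phi)(u)\,du+O(e^{-y/4})\bigr)$, while the second part is bounded by $C\psi_\alpha^{(l)}(y)/y\int_\R|u||\mathcal{D}^{(k)}(\phi)(u)|\,du=O(\psi_\alpha^{(l)}(y)/y)$ via the comparison bound. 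On $\{|u|>\sqrt y\}$ the kernel is at most $Ce^{-u^2/4}$ and $\psi_\alpha^{(l)}$ is bounded, so this piece is at most $Ce^{-y/4}=o(\psi_\alpha^{(l)}(y)/\sqrt y)$, using that $\psi_\alpha^{(l)}(y)$ decays only polynomially. Collecting the three contributions gives \eqref{eq:Icaleq} (in fact with the sharper relative remainder $O(1/y)$, a fortiori $O(1/\sqrt{|y|})$); for $k\geq1$ the leading integral vanishes and the statement reduces to this remainder estimate.

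Finally, \eqref{eq:D} is the case $k=0$ of \eqref{eq:Icaleq} (where $\int\phi=1$) for $|y|$ large, and for bounded $|y|$ it follows by compactness: the ratio $(\mathcal{I}_\alpha^{(0,l)}(y)-\psi_\alpha^{(l)}(y))/(\psi_\alpha^{(l)}(y)/\sqrt{|y|})$ is continuous on $K\times\{1\leq|y|\leq y_0\}$ and the denominator is bounded below by a positive constant, hence the ratio is bounded. For the upper bound in \eqref{eq:majIcal} I would split $\{|y-z|\leq|y|/2\}$, where $\psi_\alpha^{(l)}(z)\leq C\psi_\alpha^{(l)}(y)$, from its complement, where the kernel is super-exponentially small and $\psi_\alpha^{(l)}$ bounded; bounded $y$ is handled by boundedness and the uniform positive lower bound of $\psi_\alpha^{(l)}$ on compacts. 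The two-sided estimate on $\mathcal{I}_\alpha^{(0,0)}$ uses this upper bound and, for the lower bound, the restriction $\mathcal{I}_\alpha^{(0,0)}(y)\geq\int_{|y-z|\leq1}\phi(y-z)\psi_\alpha^{(0)}(z)\,dz\geq c\,\psi_\alpha^{(0)}(y)\int_{|u|\leq1}\phi$. The main obstacle is the second step: securing the slow-variation and comparison bounds for $\psi_\alpha^{(l)}$ uniformly in $\alpha\in K$ (with the logarithmic factors for $l\geq1$) and matching them against the Gaussian tail so that the precise relative error $O(1/\sqrt{|y|})$ emerges; once these are in place the convolution splitting is routine.
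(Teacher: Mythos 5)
Your proof is correct and follows essentially the same route as the paper's: split the convolution at $|y-z|=\sqrt{|y|}$, kill the far region using the Gaussian decay of $\mathcal{D}^{(k)}(\phi)$, control the near region by a first-order Taylor (slow-variation) bound on $\psi_{\alpha}^{(l)}$, and handle bounded $|y|$ and the two-sided bound on $\mathcal{I}_{\alpha}^{(0,0)}$ by continuity, positivity and compactness. The only difference is cosmetic: by keeping $|u|$ inside the integral against $|\mathcal{D}^{(k)}(\phi)(u)|$ you obtain the sharper relative error $O(1/|y|)$, whereas the paper bounds $|z-y|\leq\sqrt{|y|}$ uniformly on $\overline{A}_y$ and settles for $O(1/\sqrt{|y|})$, which is all the lemma requires.
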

The next lemma presents some first estimates for 
$f_{\alpha}, g_{\alpha}, h_{\alpha}$ and $k_{\alpha}$ defined in \eqref{def:fghk}. 
\begin{lem}\label{l:approxD} For any compact set $K \subset (0,2),$ $\exists C>0, \; \forall \alpha \in K, \; \forall w \in (0, 1/3], \, \forall y$
\begin{align}
    |f_{\alpha}(y, w) - & \phi(y) - c_\alpha w^\alpha \mathcal{I}_\alpha^{(0,0)}(y)|\leq C(w^\alpha \phi(\frac{y}{2}) + w^{2\alpha}\psi_{\alpha}^{(0)}(y)), \nonumber\\
    |g_{\alpha}(y, w) - & \mathcal{D}(\phi)(y) |\leq C w^\alpha \psi_{\alpha}^{(0)}(y), \nonumber \\
    |h_{\alpha}(y, w) + & \alpha c_\alpha w^\alpha \mathcal{I}_\alpha^{(0,0)}(y)|\leq C(w^\alpha \phi(\frac{y}{2}) + w^{2\alpha}\psi_{\alpha}^{(0)}(y)), \label{eq:funcapprox} \\
    |k_{\alpha}(y, w) + &c_\alpha w^\alpha \ln(1/w) \mathcal{I}_\alpha^{(0,0)}(y) + c_\alpha w^\alpha \mathcal{I}_\alpha^{(0,1)}(y) - \partial_\alpha c_\alpha w^\alpha \mathcal{I}_\alpha^{(0,0)}(y)| \nonumber \\
    &\qquad \qquad  \leq C w^\alpha \Big[ \ln(1/w)(\phi(\frac{y}{2}) + w^{\alpha}\psi_{\alpha}^{(0)}(y)) + w^\alpha \psi_{\alpha}^{(1)}(y)\Big]. \nonumber
\end{align}
In particular, for any compact set $K \subset (0,2),$ $\exists C>0, \; \forall \alpha \in K, \; \forall w \in (0, 1/3], \; \forall y$
\begin{align} 
    \frac{1}{C} (\phi(y) + &w^{\alpha} \psi_{\alpha}^{(0)}(y)) \leq f_{\alpha}(y, w) \leq C (\phi(y) + w^{\alpha} \psi_{\alpha}^{(0)}(y)), \nonumber\\
    &|g_{\alpha}(y, w)| \leq C \ln(1/w) (\phi(y) + w^{\alpha} \psi_{\alpha}^{(0)}(y)), \nonumber\\
    &| h_{\alpha}(y, w)| \leq C w^{\alpha} \psi_{\alpha}^{(0)}(y), \label{eq:majfghk} \\ 
    &|k_{\alpha}(y, w)| \leq C w^{\alpha} \ln(1/w) \psi_{\alpha}^{(1)}(y). \nonumber
\end{align}
\end{lem}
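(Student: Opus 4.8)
The plan is to treat the four functions $f_\alpha,g_\alpha,h_\alpha,k_\alpha$ by a single mechanism, since each is a convolution $\int K(y-wz)\Phi(z)\,dz$ with kernel $K\in\{\phi,\mathcal{D}(\phi)\}$ and weight $\Phi\in\{\varphi_\alpha,\mathcal{D}(\varphi_\alpha),\partial_\alpha\varphi_\alpha\}$. The three weights satisfy $\int\varphi_\alpha=1$, $\int\mathcal{D}(\varphi_\alpha)=\int(z\varphi_\alpha)'=0$ and $\int\partial_\alpha\varphi_\alpha=\partial_\alpha\int\varphi_\alpha=0$, which isolates the leading constant term, namely $K(y)$ for $f_\alpha,g_\alpha$ and $0$ for $h_\alpha,k_\alpha$. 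In every case I would first write $\int K(y-wz)\Phi(z)\,dz=K(y)\int\Phi+\int[K(y-wz)-K(y)]\Phi(z)\,dz$, then split the remaining integral at $|z|=1$. On the bulk $\{|z|\le 1\}$, where $\Phi$ is smooth with all moments, I Taylor-expand $K(y-wz)-K(y)$ and use the evenness of $\Phi$ to kill the odd first-order term; since the symmetrized second difference obeys $|K(y-wz)+K(y+wz)-2K(y)|\le(wz)^2\sup_{|\xi-y|\le 1}|K''(\xi)|\le C(wz)^2\phi(y/2)$ (Gaussian domination of the Hermite-type prefactors for large $|y|$, boundedness otherwise), this contributes $O(w^2\phi(y/2))\subset O(w^\alpha\phi(y/2))$ to the error, uniformly for $w\le 1/3$.

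On the tail $\{|z|>1\}$ I would substitute the stated asymptotic expansions, writing $\Phi(z)=\Phi_{\mathrm{pr}}(z)\mathbf{1}_{\{|z|>1\}}+R(z)$ with principal part $\Phi_{\mathrm{pr}}=c_\alpha/|z|^{\alpha+1}$ for $\varphi_\alpha$, $-\alpha c_\alpha/|z|^{\alpha+1}$ for $\mathcal{D}(\varphi_\alpha)$, $(\partial_\alpha c_\alpha-c_\alpha\ln|z|)/|z|^{\alpha+1}$ for $\partial_\alpha\varphi_\alpha$, and remainder $R(z)=O(|z|^{-2\alpha-1})$ (with an extra $\ln|z|$ for $k_\alpha$). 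For $R$, symmetrizing again and applying the change of variable $u=wz$ together with the decay $|R|\le C|z|^{-2\alpha-1}$ yields an $O(w^{2\alpha}\psi_\alpha^{(0)}(y))$ contribution from the genuinely heavy part and $O(w^2\phi(y/2))$ from the near part. The principal part is the source of the announced main terms. Taking $\varphi_\alpha$ for concreteness, after $u=wz$,
\[
c_\alpha\int_{|z|>1}\frac{\phi(y-wz)-\phi(y)}{|z|^{\alpha+1}}\,dz
= c_\alpha w^\alpha\int_{|u|>w}\frac{\phi(y-u)}{|u|^{\alpha+1}}\,du-\frac{2c_\alpha}{\alpha}\phi(y),
\]
and the integral over the annulus $\{w<|u|<1\}$ produces exactly $\tfrac{2c_\alpha}{\alpha}\phi(y)(1-w^\alpha)$ up to an $O(w^\alpha\phi(y/2))$ term (once more symmetrizing to cancel the linear part), so the two $\phi(y)$ contributions cancel and what survives is $c_\alpha w^\alpha\int_{|u|>1}\phi(y-u)|u|^{-\alpha-1}\,du$. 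Replacing $|u|^{-\alpha-1}$ by $\psi_\alpha^{(0)}(u)$ costs only an integral over $\{1<|u|<3\}$, i.e. $O(w^\alpha\phi(y/2))$, and yields $c_\alpha w^\alpha\mathcal{I}_\alpha^{(0,0)}(y)$, which is the first estimate of Lemma~\ref{l:approxD}. The bound for $g_\alpha$ follows verbatim with $K=\mathcal{D}(\phi)$, $\Phi=\varphi_\alpha$ (same cancellation of the surviving $\mathcal{D}(\phi)(y)$ term), using only $|\mathcal{I}_\alpha^{(1,0)}|\le C\psi_\alpha^{(0)}$ from \eqref{eq:majIcal}; that for $h_\alpha$ follows with $K=\phi$, $\Phi=\mathcal{D}(\varphi_\alpha)$, where $\int\mathcal{D}(\varphi_\alpha)=0$ removes the constant and the coefficient becomes $-\alpha c_\alpha$.

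The genuinely delicate case is $k_\alpha$, and I expect the bookkeeping of the logarithmic factors to be the main obstacle. Here $\Phi_{\mathrm{pr}}(z)=(\partial_\alpha c_\alpha-c_\alpha\ln|z|)/|z|^{\alpha+1}$, and the key point is that under $u=wz$ one has $\ln|z|=\ln|u|+\ln(1/w)$, so the principal tail integral splits into three pieces: the constant $\partial_\alpha c_\alpha$ gives $\partial_\alpha c_\alpha\,w^\alpha\mathcal{I}_\alpha^{(0,0)}(y)$; the $\ln(1/w)$ part of $-c_\alpha\ln|z|$ gives $-c_\alpha\ln(1/w)w^\alpha\mathcal{I}_\alpha^{(0,0)}(y)$; and the $-c_\alpha\ln|u|$ part gives $-c_\alpha w^\alpha\int_{|u|>1}\phi(y-u)\frac{\ln|u|}{|u|^{\alpha+1}}\,du=-c_\alpha w^\alpha\mathcal{I}_\alpha^{(0,1)}(y)$ after replacing $\ln|u|/|u|^{\alpha+1}$ by $\psi_\alpha^{(1)}$ on $\{|u|>3\}$. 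This reproduces the three main terms in the $k_\alpha$ estimate. The same $\phi(y)$-cancellation as above occurs, but every integral over the inner annulus $\{w<|u|<1\}$ now carries an extra factor $\ln(1/w)$ before symmetrization, which is precisely why the recorded $k_\alpha$ error bound is multiplied by $\ln(1/w)$ and why the $w^\alpha\psi_\alpha^{(1)}(y)$ term appears. Throughout, uniformity in $\alpha$ over compacts $K\subset(0,2)$ follows because the constants in the expansions of $\varphi_\alpha,\mathcal{D}(\varphi_\alpha),\partial_\alpha\varphi_\alpha$ and their $O$-terms are uniform on such $K$ (Section~14 of Sato \cite{Sato}), and $w\le 1/3$ guarantees $\ln(1/w)\ge\ln 3>0$.

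Finally, the ``in particular'' bounds \eqref{eq:majfghk} are read off from the approximations just obtained together with Lemma~\ref{L:I}: the two-sided bound $\tfrac1C\psi_\alpha^{(0)}\le\mathcal{I}_\alpha^{(0,0)}\le C\psi_\alpha^{(0)}$ and $|\mathcal{I}_\alpha^{(k,l)}|\le C\psi_\alpha^{(l)}$ from \eqref{eq:majIcal}, the elementary comparisons $\phi(y/2)\le C\psi_\alpha^{(0)}(y)$ and $\psi_\alpha^{(0)}(y)\le C\psi_\alpha^{(1)}(y)$, and, for $g_\alpha$, the pointwise estimate $|\mathcal{D}(\phi)(y)|=|1-y^2|\phi(y)\le C\ln(1/w)(\phi(y)+w^\alpha\psi_\alpha^{(0)}(y))$, which I would prove by splitting at $|y|^2=4\alpha\ln(1/w)$: on the inner region $1+y^2\le C\ln(1/w)$ absorbs the factor against $\phi(y)$, while on the outer region $e^{-y^2/4}\le w^\alpha$ makes the Gaussian dominate $w^\alpha\psi_\alpha^{(0)}(y)$.
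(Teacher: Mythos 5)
Your proposal is correct and rests on the same pillars as the paper's proof — Sato's uniform expansions of $\varphi_\alpha$, $\mathcal{D}(\varphi_\alpha)$, $\partial_\alpha\varphi_\alpha$, evenness of the weights to kill the first-order Taylor term, a change of variables producing the $\mathcal{I}_\alpha^{(k,l)}$ functions, and Lemma \ref{L:I} to pass to $\psi_\alpha^{(l)}$ — but your decomposition is organized differently. The paper splits the convolution at $|z|=1/w$: on $\{|z|\le 1/w\}$ one has $|wz|\le 1$, so Taylor plus the crude bound $\varphi_\alpha(z)\le C|z|^{-\alpha-1}$ yields $O(w^\alpha\phi(\frac{y}{2}))$ in one stroke, and the region $\{|z|>1/w\}$ delivers the main term $c_\alpha w^\alpha\mathcal{I}_\alpha^{(k,l)}(y)$ directly after substituting the principal part (error controlled by $\psi_{2\alpha}^{(0)}$). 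You instead cut at the fixed radius $|z|=1$ and split the tail into principal part plus remainder; the price is that after the change of variables the main term lives on $\{|u|>w\}$, and you must evaluate the annulus $\{w<|u|<1\}$ exactly to exhibit the cancellation of the $\tfrac{2c_\alpha}{\alpha}\phi(y)$ contributions — a step the paper's choice of cut-off avoids entirely, and also the step where your $k_\alpha$ bookkeeping correctly picks up its $\ln(1/w)$ factors. In exchange, your unified kernel/weight formulation (with $\int\Phi\in\{1,0,0\}$ isolating the constant term) makes the four cases genuinely parallel, and your derivation of the $g_\alpha$ bound in \eqref{eq:majfghk} (splitting at $|y|^2=4\alpha\ln(1/w)$ to bound $|\mathcal{D}(\phi)(y)|$ pointwise) is the same mechanism as the paper's $y^*(w)$ argument, applied to the limit function rather than to $g_\alpha$ itself. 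Two harmless imprecisions: the near-tail remainder contribution is $O(w^{2\alpha\wedge 2}\phi(\frac{y}{2}))$, not $O(w^{2}\phi(\frac{y}{2}))$, when $\alpha<1$ (still within the allowed $O(w^\alpha\phi(\frac{y}{2}))$); and Sato's expansions hold for $|z|\ge 3$, so on $\{1<|z|<3\}$ the remainder is merely bounded, which suffices because $|z|^{-2\alpha-1}$ is bounded below there.
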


We give then a more precise approximation result for $f_{\alpha}, h_{\alpha}$ and $l_{\alpha}$, defined in \eqref{def:fghk} and \eqref{def:l},  by combining Lemma \ref{L:I} and Lemma \ref{l:approxD}.
\begin{lem}\label{L:fonctionsApprox}
$\exists C>0, \; \forall \Gamma \geq 1 , \; \forall  |y|>\Gamma $ and for any n large enough
\begin{align}
 \big|f_{\alpha}(y, w_n(\theta))-& \phi(y) - c_{\alpha} w_n(\theta)^{\alpha} \psi_{\alpha}^{(0)}(y) \big|
 \leq C f_{\alpha}(y, w_n(\theta)) \Bigl(\frac{1}{\sqrt{\Gamma}} + \frac{1}{n^{1-\alpha/2}} \Bigr), \label{eq:etape1Approxf} \\
 \big|h_{\alpha}(y, w_n(\theta)) + &\alpha c_{\alpha} w_n(\theta)^{\alpha} \psi_{\alpha}^{(0)}(y) \big| \leq C w_n(\theta)^{\alpha} \psi_{\alpha}^{(0)}(y) \Bigl(\frac{1}{\sqrt{\Gamma}} + \frac{1}{n^{1-\alpha/2}}\Bigr), \label{eq:etape1Approxh} \\
\Big|l_{\alpha}(y,  w_n(\theta)) +&  c_{\alpha} w_n(\theta)^{\alpha} \psi_{\alpha}^{(1)}(y) 
- \frac{c_{\alpha}}{2} \ln(\ln n) w_n(\theta)^{\alpha} \psi_{\alpha}^{(0)}(y)\nonumber \\
&-  w_n(\theta)^{\alpha} \bigl[c_{\alpha} \ln(\frac{\delta }{\sigma}) + \partial_\alpha c_{\alpha} \bigr]\psi_{\alpha}^{(0)}(y)) \Big|\label{eq:Lbarre}\\
    \leq& C w_n(\theta)^{\alpha} \big(\psi_{\alpha}^{(0)}(y)+\frac{\psi_{\alpha}^{(1)}(y)}{\ln(\ln n)}\big)\Bigl(\frac{\ln(\ln n)}{\sqrt{|\Gamma|}} + \ln(n) \phi(\frac{\Gamma}{3}) + \frac{\ln(n)}{n^{1- \alpha/2}}\Bigr). \nonumber
\end{align}
\end{lem}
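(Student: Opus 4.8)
The plan is to derive all three estimates from Lemmas \ref{L:I} and \ref{l:approxD} applied with $w=w_n(\theta)$, which tends to $0$ so that $w_n(\theta)\in(0,1/3]$ for $n$ large and the bounds of Lemma \ref{l:approxD} are available. Throughout I will use three elementary facts valid for $|y|>\Gamma\geq1$: that $1/\sqrt{|y|}\leq1/\sqrt{\Gamma}$; that $w_n(\theta)^{2\alpha}=(\delta/\sigma)^{\alpha}\,w_n(\theta)^{\alpha}/n^{1-\alpha/2}$ by \eqref{E:wn}; and two Gaussian-versus-power comparisons, namely $\phi(y/2)\leq C\,\psi_{\alpha}^{(0)}(y)/\sqrt{\Gamma}$ and $\phi(y/2)\leq C\,\psi_{\alpha}^{(0)}(y)\,\phi(\Gamma/3)$. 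Both comparisons follow from the boundedness of $e^{-y^2/8}|y|^{\alpha+3/2}$ and of $e^{-y^2/8+\Gamma^2/18}|y|^{\alpha+1}$ on $\{|y|>\Gamma\geq1\}$, using $\sqrt{\Gamma}\leq\sqrt{|y|}$ for the first and, for the second, that the exponent decreases in $|y|$ beyond a fixed threshold while the complementary range of $\Gamma$ is compact.

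For \eqref{eq:etape1Approxf} and \eqref{eq:etape1Approxh} I would start from the approximations of $f_{\alpha}$ and $h_{\alpha}$ in \eqref{eq:funcapprox}, replace $\mathcal{I}_{\alpha}^{(0,0)}(y)$ by $\psi_{\alpha}^{(0)}(y)$ using \eqref{eq:D} (which costs $C\,w_n(\theta)^{\alpha}\psi_{\alpha}^{(0)}(y)/\sqrt{\Gamma}$), and then convert the two residual terms $w_n(\theta)^{\alpha}\phi(y/2)$ and $w_n(\theta)^{2\alpha}\psi_{\alpha}^{(0)}(y)$ into $C\,w_n(\theta)^{\alpha}\psi_{\alpha}^{(0)}(y)\,(1/\sqrt{\Gamma}+1/n^{1-\alpha/2})$ via the facts above. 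For $f_{\alpha}$ the right-hand side is finally reexpressed in terms of $f_{\alpha}$ itself using the lower bound $f_{\alpha}(y,w_n(\theta))\geq\frac{1}{C}w_n(\theta)^{\alpha}\psi_{\alpha}^{(0)}(y)$ from \eqref{eq:majfghk}; for $h_{\alpha}$ no such step is needed and the bound is already of the claimed form.

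The substantial part is \eqref{eq:Lbarre}. Writing $A=\frac{1}{2\alpha}(\ln n-\ln\ln n)-\frac{\ln n}{\alpha^2}$ so that $l_{\alpha}=A\,h_{\alpha}+k_{\alpha}$ by \eqref{def:l}, I would insert the approximations of $h_{\alpha}$ and $k_{\alpha}$ from \eqref{eq:funcapprox}, using that $\ln(1/w_n(\theta))=\frac{\ln n}{\alpha}-\frac{\ln n}{2}-\ln(\delta/\sigma)$ by \eqref{E:wn}. The key algebraic step is that the coefficient of $w_n(\theta)^{\alpha}\mathcal{I}_{\alpha}^{(0,0)}(y)$ equals $-\alpha c_{\alpha}A-c_{\alpha}\ln(1/w_n(\theta))+\partial_{\alpha}c_{\alpha}$, in which \emph{both} the $\ln n/2$ and the $\ln n/\alpha$ contributions cancel exactly, leaving $\frac{c_{\alpha}}{2}\ln\ln n+c_{\alpha}\ln(\delta/\sigma)+\partial_{\alpha}c_{\alpha}$; the coefficient of $w_n(\theta)^{\alpha}\mathcal{I}_{\alpha}^{(0,1)}(y)$ is simply $-c_{\alpha}$. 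This exact cancellation is precisely the compensation engineered by the choice $c_n=-\frac{\delta}{2\alpha}[\ln n-\ln\ln n]$. Replacing $\mathcal{I}_{\alpha}^{(0,0)}$ by $\psi_{\alpha}^{(0)}$ and $\mathcal{I}_{\alpha}^{(0,1)}$ by $\psi_{\alpha}^{(1)}$ via \eqref{eq:D} then reproduces exactly the main term of \eqref{eq:Lbarre}.

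The hard part is the bookkeeping of the error terms, all of which must be absorbed into $C\,w_n(\theta)^{\alpha}(\psi_{\alpha}^{(0)}(y)+\psi_{\alpha}^{(1)}(y)/\ln\ln n)\,(\frac{\ln\ln n}{\sqrt{\Gamma}}+\ln n\,\phi(\Gamma/3)+\frac{\ln n}{n^{1-\alpha/2}})$. Tracking them: the $h_{\alpha}$-error multiplied by $|A|=O(\ln n)$ and the $k_{\alpha}$-error (whose $\ln(1/w_n(\theta))$ factor is $O(\ln n)$) yield the $\ln n\,\phi(\Gamma/3)$ and $\ln n/n^{1-\alpha/2}$ contributions attached to $\psi_{\alpha}^{(0)}$, together with a term $w_n(\theta)^{2\alpha}\psi_{\alpha}^{(1)}(y)$ that fits the $\psi_{\alpha}^{(1)}/\ln\ln n$ slot since $\ln n/\ln\ln n\geq1$; the replacement of $\mathcal{I}_{\alpha}^{(0,0)}$ by $\psi_{\alpha}^{(0)}$ costs $O(\ln\ln n)\cdot w_n(\theta)^{\alpha}\psi_{\alpha}^{(0)}(y)/\sqrt{\Gamma}$, giving the $\frac{\ln\ln n}{\sqrt{\Gamma}}$ term on $\psi_{\alpha}^{(0)}$; and the replacement of $\mathcal{I}_{\alpha}^{(0,1)}$ by $\psi_{\alpha}^{(1)}$ costs $C\,w_n(\theta)^{\alpha}\psi_{\alpha}^{(1)}(y)/\sqrt{\Gamma}$, which is exactly $w_n(\theta)^{\alpha}(\psi_{\alpha}^{(1)}(y)/\ln\ln n)(\ln\ln n/\sqrt{\Gamma})$ and explains why the factor $\psi_{\alpha}^{(1)}/\ln\ln n$ must be paired with $\ln\ln n/\sqrt{\Gamma}$ in the statement. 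The only genuine care needed is to confirm that every residual exponential term $\phi(y/2)$ is dominated by $\psi_{\alpha}^{(0)}(y)\,\phi(\Gamma/3)$ uniformly in $\Gamma\geq1$, as established at the outset.
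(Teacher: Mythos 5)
Your proposal is correct and follows essentially the same route as the paper's own proof: insert the estimates of Lemma \ref{l:approxD}, exploit the exact cancellation of the $\ln(n)$ terms in $l_\alpha = A h_\alpha + k_\alpha$ (your computation of the coefficient $-\alpha c_\alpha A - c_\alpha\ln(1/w_n(\theta)) + \partial_\alpha c_\alpha = \frac{c_\alpha}{2}\ln\ln n + c_\alpha\ln(\delta/\sigma) + \partial_\alpha c_\alpha$ is exactly the paper's cancellation step), replace $\mathcal{I}_\alpha^{(0,l)}$ by $\psi_\alpha^{(l)}$ via \eqref{eq:D}, and absorb the residuals using $w_n(\theta)^{2\alpha} = C\, w_n(\theta)^\alpha/n^{1-\alpha/2}$, the comparison $\phi(y/2)\le C\psi_\alpha^{(0)}(y)\phi(y/3)\le C\psi_\alpha^{(0)}(y)\phi(\Gamma/3)$, and the lower bound on $f_\alpha$ from \eqref{eq:majfghk}. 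The error bookkeeping, including why the $\psi_\alpha^{(1)}/\ln\ln n$ factor pairs with the three terms in the parenthesis, matches the paper's argument.
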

Using the previous estimates, we finally obtain the next  proposition. With 
$ u_n(\theta)^T G_n(\theta)$ given by \eqref{E:scoren}, this proposition is the key
 ingredient to identify the information matrix (the limit of $\E^{\theta}( u_n(\theta)^T G_n(\theta)G_n(\theta)^T u_n(\theta))$). 
\begin{prop}\label{L:tout}We have
\begin{align*}
   &\bigg| \int \frac{(h_{\alpha})^2 }{f_{\alpha}} (y, w_n(\theta))dy 
   - \frac{\alpha^2 \kappa_0(\theta)}{n^{1 -\alpha/2 } \ln(n)^{\alpha/2}}  \bigg|\leq  C \frac{1}{n^{1 -\alpha/2 } \ln(n)^{\alpha/2}} \frac{\ln(\ln n)}{\ln(n)^{1/4\wedge\alpha}},\\
   &\bigg| \int \frac{h_{\alpha} l_{\alpha}}{f_{\alpha}} (y, w_n(\theta))dy 
   + \frac{\alpha \kappa_0(\theta)  \kappa_1(\theta)}{n^{1 -\alpha/2 } \ln(n)^{\alpha/2}}  \bigg| \leq  C \frac{1}{n^{1 -\alpha/2 } \ln(n)^{\alpha/2}} \frac{(\ln(\ln n))^2}{\ln(n)^{1/4\wedge\alpha}},\\
   &\bigg| \int \frac{(l_{\alpha})^2}{f_{\alpha}} (y, w_n(\theta))dy - \frac{\kappa_0(\theta)(\kappa_1(\theta)^2+1/\alpha^2)}{n^{1 -\alpha/2 } \ln(n)^{\alpha/2}}  \bigg|\leq  C \frac{1}{n^{1 -\alpha/2 } \ln(n)^{\alpha/2}} \frac{(\ln(\ln n))^3}{\ln(n)^{1/4\wedge\alpha}},
\end{align*}
with $\kappa_0(\theta)=\frac{2  c_{\alpha}}{\alpha(2-\alpha)^{\alpha/2}} (\frac{ \delta }{\sigma})^{\alpha}$ and $\kappa_1(\theta)=\ln(\frac{ \delta }{\sigma}) + \frac{ \partial_\alpha c_{\alpha}}{c_{\alpha}} - \frac{\ln(2-\alpha)}{2} - \frac{1}{\alpha}$.
\end{prop}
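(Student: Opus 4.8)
The plan is to reduce each of the three integrals to elementary one–dimensional tail integrals of $\psi_\alpha^{(0)}$ and $\psi_\alpha^{(1)}$ by inserting the sharp approximations of Lemma~\ref{L:fonctionsApprox} for $h_\alpha$, $l_\alpha$ and $f_\alpha$, and then to carry out the resulting computations while tracking carefully the contribution of the logarithmic threshold. The starting point is that, by \eqref{eq:etape1Approxf}, the denominator satisfies $f_\alpha(y,w_n(\theta))\simeq \phi(y)+c_\alpha w_n(\theta)^\alpha\psi_\alpha^{(0)}(y)$, and the two terms balance at the level $|y|=y_n^{*}$ with $y_n^{*}=\sqrt{(2-\alpha)\ln n}\,(1+o(1))$, obtained by solving $\phi(y)=c_\alpha w_n(\theta)^\alpha\psi_\alpha^{(0)}(y)$ (taking logarithms and using $\alpha\ln w_n(\theta)=(\tfrac{\alpha}{2}-1)\ln n+\alpha\ln(\tfrac{\delta}{\sigma})$). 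Since $h_\alpha$ and $l_\alpha$ are of size $w_n(\theta)^\alpha$ times a decaying $\psi$-function, each integrand grows like $e^{y^2/2}$ below $y_n^{*}$ (where $\phi$ dominates $f_\alpha$) and decays like a convergent stable tail above $y_n^{*}$ (where $c_\alpha w_n(\theta)^\alpha\psi_\alpha^{(0)}$ dominates), so the whole mass concentrates near $|y|=y_n^{*}$. This is what produces the factor $(y_n^{*})^{-\alpha}=(2-\alpha)^{-\alpha/2}\ln(n)^{-\alpha/2}(1+o(1))$ multiplying $w_n(\theta)^\alpha=n^{-(1-\alpha/2)}(\tfrac{\delta}{\sigma})^\alpha$, hence the global order $n^{-(1-\alpha/2)}\ln(n)^{-\alpha/2}$.

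Concretely, I would fix a cut-off $\Gamma=\Gamma_n$ of order $\sqrt{\ln n}$ but strictly below $y_n^{*}$, split each integral at $|y|=\Gamma$, and discard the region $|y|\le\Gamma$, where $\psi_\alpha^{(p)}$ and $f_\alpha$ are bounded above and below so that $h_\alpha,l_\alpha=O(w_n(\theta)^\alpha)$ and the contribution is $O(\Gamma\,w_n(\theta)^{2\alpha})$, negligible against $w_n(\theta)^\alpha(y_n^{*})^{-\alpha}$. On $\{|y|>\Gamma\}$ I substitute Lemma~\ref{L:fonctionsApprox}. Writing $\rho(y)=c_\alpha w_n(\theta)^\alpha\psi_\alpha^{(0)}(y)/\phi(y)$, the first integrand becomes $\alpha^2 c_\alpha w_n(\theta)^\alpha\psi_\alpha^{(0)}(y)\,\tfrac{\rho}{1+\rho}$ up to relative error $O(1/\sqrt{\Gamma})$; since $\tfrac{\rho}{1+\rho}$ passes from $0$ to $1$ across $y_n^{*}$ over a window of width $O(1/y_n^{*})$, one obtains $\int (h_\alpha)^2/f_\alpha\simeq \alpha^2 c_\alpha w_n(\theta)^\alpha\int_{|y|>y_n^{*}}\psi_\alpha^{(0)}=2\alpha c_\alpha w_n(\theta)^\alpha(y_n^{*})^{-\alpha}/\alpha\cdot\alpha$, which equals $\alpha^2\kappa_0(\theta)\,n^{-(1-\alpha/2)}\ln(n)^{-\alpha/2}$ because $\alpha^2\kappa_0(\theta)=\tfrac{2\alpha c_\alpha}{(2-\alpha)^{\alpha/2}}(\tfrac{\delta}{\sigma})^\alpha$.

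For the cross and square terms I use that in the tail $f_\alpha\simeq c_\alpha w_n(\theta)^\alpha\psi_\alpha^{(0)}$ and $h_\alpha\simeq -\alpha c_\alpha w_n(\theta)^\alpha\psi_\alpha^{(0)}$, so $h_\alpha l_\alpha/f_\alpha\simeq -\alpha l_\alpha$ and $(l_\alpha)^2/f_\alpha\simeq (l_\alpha)^2/(c_\alpha w_n(\theta)^\alpha\psi_\alpha^{(0)})$. Plugging the three-term expansion of $l_\alpha$ from \eqref{eq:Lbarre}, everything reduces to $\int_{y_n^{*}}^\infty y^{-(\alpha+1)}(\ln y)^k\,dy$ for $k=0,1,2$, whose leading asymptotics bring in $\ln y_n^{*}=\tfrac12\ln\ln n+\tfrac12\ln(2-\alpha)$. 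The coefficient $\tfrac{c_\alpha}{2}\ln\ln n$ placed in front of $\psi_\alpha^{(0)}$ in $l_\alpha$ is exactly what cancels the $\ln\ln n$ arising from $\int\psi_\alpha^{(1)}\sim(\ln y_n^{*})\int\psi_\alpha^{(0)}$. For the square term this is cleanest after completing the square: with $A=\tfrac{c_\alpha}{2}\ln\ln n+c_\alpha\ln(\tfrac{\delta}{\sigma})+\partial_\alpha c_\alpha$ one has $(l_\alpha)^2/f_\alpha\simeq \tfrac{w_n(\theta)^\alpha}{c_\alpha}\psi_\alpha^{(0)}(c_\alpha\ln|y|-A)^2$, and since $c_\alpha\ln y_n^{*}-A=\tfrac{c_\alpha}{2}\ln(2-\alpha)-c_\alpha\ln(\tfrac{\delta}{\sigma})-\partial_\alpha c_\alpha=-c_\alpha(\kappa_1(\theta)+\tfrac1\alpha)$ (the $\ln\ln n$ again cancelling), the tail integral collapses to $c_\alpha^2(\kappa_1(\theta)^2+\tfrac1{\alpha^2})\tfrac1\alpha(y_n^{*})^{-\alpha}$ per tail; multiplying by $2w_n(\theta)^\alpha/c_\alpha$ yields precisely $\kappa_0(\theta)(\kappa_1(\theta)^2+\tfrac1{\alpha^2})\,n^{-(1-\alpha/2)}\ln(n)^{-\alpha/2}$, and the same bookkeeping gives $-\alpha\kappa_0(\theta)\kappa_1(\theta)$ for the cross term.

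Three error sources must finally be controlled: the region $|y|\le\Gamma$ (negligible, as above); the replacement of the smooth transition of $\tfrac{\rho}{1+\rho}$ by a sharp cut at $y_n^{*}$, which costs a relative $O((y_n^{*})^{-2})=O(1/\ln n)$; and the approximation errors of Lemma~\ref{L:fonctionsApprox}, where the optimal choice $\Gamma\sim\sqrt{\ln n}$ makes the leading $1/\sqrt{\Gamma}\sim\ln(n)^{-1/4}$ the dominant one (this is the origin of the $\ln(n)^{-(1/4\wedge\alpha)}$ factor), while the prefactors $(\psi_\alpha^{(0)}+\psi_\alpha^{(1)}/\ln(\ln n))$ and $\ln\ln n$ in the error bound \eqref{eq:Lbarre} account for the growing powers $(\ln\ln n)^{j}$, $j=1,2,3$. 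I expect the main obstacle to be making the $\ln\ln n$ cancellation rigorous uniformly through the crossover at $y_n^{*}$: one must use the precise two-term asymptotics of the tail integrals $\int_{y_n^{*}}^\infty y^{-(\alpha+1)}(\ln y)^k\,dy$ (keeping the $1/\alpha$ and $2/\alpha^2$ corrections, which are exactly what reassemble into $\kappa_1(\theta)^2+1/\alpha^2$) and verify that what remains after the exact cancellation of the $\ln\ln n$ terms is genuinely of the stated lower order rather than merely formally so.
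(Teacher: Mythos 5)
Your proposal follows essentially the same route as the paper's proof: cut at $\Gamma_n\sim\sqrt{\ln n}$ and discard the inner region, substitute the approximations of Lemma \ref{L:fonctionsApprox} on the tail (this is the paper's Lemma \ref{L:IntGamman}), replace the smooth factor $\rho/(1+\rho)$ by a sharp cutoff at the crossover $y_n^*\approx M_n=\sqrt{(2-\alpha)\ln n}$ (this is the paper's Lemma \ref{l:ramenerInt}, proved there by squeeze bounds with perturbed thresholds $y^*(\mu_n w_n)$, $\mu_n=1/\ln n$, and $\gamma_n y^*(w_n)$, $\gamma_n=1-\ln(n)^{-1/2}$, rather than your Laplace-window argument), and finish with the two-term tail asymptotics, where your completing-the-square identity $c_\alpha\ln y_n^*-A=-c_\alpha(\kappa_1(\theta)+1/\alpha)$ is exactly the $\ln\ln n$ cancellation the paper exhibits through the explicit values of $\Psi^{(0)}(M_n)$, $\Psi^{(1)}(M_n)$, $\Psi^{(2)}(M_n)$. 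Two harmless slips to note: the inner-region contribution is $O(\Gamma\, w_n(\theta)^{2\alpha}/\phi(\Gamma))$, not $O(\Gamma\, w_n(\theta)^{2\alpha})$, since $f_\alpha$ is only bounded below by $\phi(\Gamma)$ there (still negligible for $\Gamma\sim\sqrt{(1-\alpha/2)\ln n}$), and your error accounting, if made rigorous, would give something smaller than and hence consistent with the stated $\ln(n)^{-(1/4\wedge\alpha)}$, whose $\wedge\alpha$ part in the paper originates from the cruder squeeze bound in Lemma \ref{l:ramenerInt}, not from the $1/\sqrt{\Gamma}$ term.
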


\subsection{Higher order derivatives} \label{S:suporder}

In this section, we give some bounds on the functions $f_{\alpha}^{(k, l, m)}$ defined in \eqref{def:fklm} that appear in the expression of the Hessian matrix $J_n$.

\begin{lem}\label{L:MajFunc}For any compact set $K \subset (0,2),$ and for $k,l,m \in \N$,
$\exists C>0, \; \forall \alpha \in K, \; \forall w \in (0, 1/3], \; \forall y$
\begin{align}
    |f_{\alpha}^{(k, 0, 0)}(y, w)| &\leq C\ln(1/w)^k \bigl(\phi(y)+w^\alpha \psi_\alpha^{(0)}(y) \bigr), \label{eq:majfk00}\\
    |f_{\alpha}^{(k, l, m)}(y, w)| &\leq C \ln(1/w)^{m} w^\alpha \psi_\alpha^{(m)}(y) \quad \text{if } l+m>0. \label{eq:majfklm}
\end{align}
\end{lem}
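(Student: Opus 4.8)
The plan is to reduce everything to two structural facts about the stable factor $G := \mathcal{D}^{(l)}(\partial^m_{\alpha^m}\varphi_{\alpha})$ appearing in \eqref{def:fklm} and about the kernel $\Phi_k := \mathcal{D}^{(k)}(\phi)$, and then to transfer the $y$-dependence onto the functions $\mathcal{I}_{\alpha}^{(k,\cdot)}$ of \eqref{eq:Ical} through Lemma \ref{L:I}. First I would record that $\Phi_k$ is a Schwartz function with $\int\Phi_k=0$ for $k\geq 1$ (it is a total derivative), and that, by the asymptotic expansions of $\varphi_\alpha$ and its $\alpha$-derivatives quoted before \eqref{eq:fp} together with the smoothness of $(z,\alpha)\mapsto\varphi_\alpha(z)$, one has the uniform tail bound $|G(z)|\leq C\,\psi_{\alpha}^{(m)}(z)$ for all $z$ and all $\alpha\in K$. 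The key point here is that $\mathcal{D}$ preserves the order of a log-power tail, since $\mathcal{D}(|z|^{-\beta}(\ln|z|)^j)=(1-\beta)|z|^{-\beta}(\ln|z|)^j+j\,|z|^{-\beta}(\ln|z|)^{j-1}$, so iterating $\mathcal{D}$ leaves the leading term of order $\psi_{\alpha}^{(m)}$ unchanged. Moreover, whenever $l+m>0$ we have $\int G=0$: for $l\geq1$ because $G$ is again a total derivative, and for $l=0,\ m\geq1$ because $\int\partial^m_{\alpha^m}\varphi_\alpha=\partial^m_{\alpha^m}\int\varphi_\alpha=0$.

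For the bound \eqref{eq:majfk00} (the case $l=m=0$, so $G=\varphi_\alpha$ with $\int\varphi_\alpha=1$) I would split the integral in \eqref{def:fklm} at $|z|=1$. On $\{|z|\leq1\}$ the factor $\varphi_\alpha$ is bounded and $\Phi_k$ is concentrated, producing the Gaussian main term controlled by $\phi(y)$; on $\{|z|>1\}$ the rescaling $u=wz$ and the stable tail $\varphi_\alpha(z)\simeq c_\alpha|z|^{-\alpha-1}$ turn the integral into a $w^\alpha$ multiple of an $\mathcal{I}_{\alpha}^{(k,0)}$-type object, which by \eqref{eq:majIcal} is $O(\psi_\alpha^{(0)}(y))$. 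The powers $\ln(1/w)^k$ are produced by the interaction of the $\mathcal{D}^{(k)}$-structure with the scale-free tail over the intermediate range $1<|z|<1/w$; I would track them by induction on $k$ through the recursion $\Phi_k=\Phi_{k-1}+(\,\cdot\,)\,\Phi_{k-1}'$, each level contributing at most one extra logarithmic factor.

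For the bound \eqref{eq:majfklm} (the case $l+m>0$) the essential point is that the naive triangle inequality destroys the cancellation that produces the small factor $w^\alpha$, so I would exploit $\int G=0$ by writing
$$
f_{\alpha}^{(k,l,m)}(y,w)=\int\bigl[\Phi_k(y-wz)-\Phi_k(y)\bigr]G(z)\,dz,
$$
which removes the $O(1)$ contribution of the bulk $|z|\sim1$. Using $|G(z)|\leq C\,\psi_\alpha^{(m)}(z)$, a Taylor estimate of the increment $\Phi_k(y-wz)-\Phi_k(y)$ in the range $|z|\leq 1/w$, and the Gaussian decay of $\Phi_k$ for $|z|\geq 1/w$, one extracts the factor $w^\alpha$; after the rescaling $u=wz$ the log-tail of $G$ contributes $(\ln(|u|/w))^m=(\ln(1/w)+\ln|u|)^m$, whence the factor $\ln(1/w)^m$, while the remaining $y$-dependence reduces to $\mathcal{I}_\alpha^{(k,m)}(y)$ and is bounded by $C\,\psi_\alpha^{(m)}(y)$ through Lemma \ref{L:I}. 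I would treat the regime $|y|$ bounded and the regime $|y|$ large separately, the latter being the one where $\psi_\alpha^{(m)}(y)$ (rather than any Gaussian term) governs the decay.

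The main obstacle is precisely this last bookkeeping: obtaining the sharp logarithmic powers together with the exact $w^\alpha$ scaling, uniformly in $\alpha\in K$ and simultaneously for all $y$. The difficulty is that the decisive cancellation lives in the intermediate region $1<|z|<1/w$, where the stable tail is scale-invariant and the logarithms accumulate; one must therefore use the total-derivative (zero-mean) structure of $\Phi_k$ when $k\geq1$ and of $G$ when $l+m>0$ rather than absolute values, and keep the $\psi_\alpha^{(m)}(y)$ profile intact while doing so. These base estimates are exactly of the type already carried out for $f_\alpha,g_\alpha,h_\alpha,k_\alpha$ in Lemma \ref{l:approxD}, so the proof amounts to organizing that computation for arbitrary $(k,l,m)$, most efficiently by an induction on $k$ and $l$ using $\mathcal{D}^{(k)}=\mathcal{D}(\mathcal{D}^{(k-1)})$.
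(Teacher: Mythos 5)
Your argument for \eqref{eq:majfklm} is essentially the paper's proof: the same uniform tail bound $|\mathcal{D}^{(l)}(\partial^m_{\alpha^m}\varphi_\alpha)(z)|\le C\psi_\alpha^{(m)}(z)$, the same zero-mean identities for $l+m>0$ (total-derivative structure when $l\ge1$, differentiation of $\int\varphi_\alpha=1$ in $\alpha$ when $m\ge1$), the subtraction of $\mathcal{D}^{(k)}(\phi)(y)$ to exploit that cancellation, a Taylor/evenness estimate on $\{|z|\le 1/w\}$, and the tail region reduced to $\mathcal{I}_\alpha^{(k,m)}$ and bounded via \eqref{eq:majIcal}. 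That half is sound.

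The gap is in \eqref{eq:majfk00}, and it is twofold. First, the bulk term is not ``controlled by $\phi(y)$'': the main term produced by your splitting is $\mathcal{D}^{(k)}(\phi)(y)\int\varphi_\alpha=P_{2k}(y)\phi(y)$, where $P_{2k}$ is a polynomial of degree $2k$, so what the decomposition actually yields is
$$|f_\alpha^{(k,0,0)}(y,w)| \le C\bigl((1+|y|^{2k})\phi(y)+w^\alpha\psi_\alpha^{(0)}(y)\bigr),$$
and $(1+|y|^{2k})\phi(y)$ is not $O(\phi(y))$ uniformly in $y$. Second, your proposed mechanism for the factor $\ln(1/w)^k$ --- logarithms accumulating from the intermediate range $1<|z|<1/w$, tracked by induction on $k$ --- cannot produce it: when $m=0$ the stable tail $|z|^{-\alpha-1}$ is integrable at infinity, so the region $1<|z|<1/w$ contributes $O(w^\alpha)$ with no logarithmic factor whatsoever; iterating $\mathcal{D}$ on the Gaussian kernel only changes the polynomial prefactor $P_{2k}$, it does not alter the $z$-integration. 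The logarithm in \eqref{eq:majfk00} is a $y$-space phenomenon. One must compare $(1+|y|^{2k})\phi(y)$ with $w^\alpha\psi_\alpha^{(0)}(y)$ across their crossover point $y^*(w)$, defined by $(1+|y|^{2k})\phi(y)=w^\alpha\psi_\alpha^{(0)}(y)$, which satisfies $y^*(w)\sim\sqrt{2\alpha\ln(1/w)}$ as $w\to0$: for $|y|<y^*(w)$ one has $1+|y|^{2k}\le C\ln(1/w)^k$, hence the bound $C\ln(1/w)^k\phi(y)$, while for $|y|\ge y^*(w)$ the monotonicity of $(1+|y|^{2k})\phi(y)/\psi_\alpha^{(0)}(y)$ gives domination by $Cw^\alpha\psi_\alpha^{(0)}(y)$. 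This is precisely the refinement the paper carries out for $g_\alpha=f_\alpha^{(1,0,0)}$ in \eqref{eq:refineboundg} and then invokes for general $k$. Without it you do not reach the stated form of \eqref{eq:majfk00}, and that form (a pure $\ln(1/w)^k$ times the density profile $\phi+w^\alpha\psi_\alpha^{(0)}$) is exactly what is needed downstream, e.g.\ for the $\ln(n)^p$ bound on $\partial^2_{\sigma\sigma}\ln p_{1/n}$ in Lemma \ref{L:der2der3}.
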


We are now able to state sharp upper bounds for the second and third order derivatives of $\ln p_{1/n}$.
\begin{lem}\label{L:der2der3} For $a,b \in \{\sigma, \delta, \alpha \}$, $\exists C>0, \;\exists p>0, \; \forall y$
\begin{align}
    |\partial^2_{\sigma \sigma} \ln p_{1/n}(y, \theta_0)| &\leq C \ln(n)^p, \label{eq:boundder2} \\
    |\partial^2_{a b} \ln p_{1/n}(y, \theta_0) |&\leq C \ln(n)^p \frac{w_n(\theta_0)^{\alpha_0}\psi_{\alpha_0}^{(2)}(\frac{\sqrt{n}}{\sigma_0} y)}{\phi(\frac{\sqrt{n}}{\sigma_0} y)+w_n(\theta_0)^{\alpha_0}\psi_{\alpha_0}^{(0)}(\frac{\sqrt{n}}{\sigma_0} y)} \quad \text{if } (a,b)\neq(\sigma, \sigma). \nonumber \end{align}
For any compact set $V \subset (0, \infty) \times (0, \infty) \times (0, 2)$ and for $a,b,c \in \{\sigma, \delta, \alpha \}$, 
$\exists C>0, \; \exists p>0, \;\forall \theta \in V, \;\forall y$
\begin{align}
    |\partial^3_{\sigma^3} \ln p_{1/n}(y, \theta)| &\leq C \ln(n)^p, \label{eq:boundder3}\\
    |\partial^3_{a b c} \ln p_{1/n}(y, \theta)| &\leq C \ln(n)^p \frac{w_n(\theta)^{\alpha}\psi_{\alpha}^{(3)} (\frac{\sqrt{n}}{\sigma} y)}{\phi(\frac{\sqrt{n}}{\sigma} y)+w_n(\theta)^{\alpha}\psi_{\alpha}^{(0)}(\frac{\sqrt{n}}{\sigma} y)} \quad \text{if } (a,b,c)\neq(\sigma, \sigma, \sigma).\nonumber \end{align}
\end{lem}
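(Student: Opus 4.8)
The plan is to reduce every parameter derivative of $\ln p_{1/n}$ to the auxiliary functions $f_\alpha^{(k,l,m)}$ of \eqref{def:fklm} and then apply the sharp bounds of Lemma \ref{L:MajFunc}. Writing $\bar y = \tfrac{\sqrt n}{\sigma}y$ and $w=w_n(\theta)$, recall from \eqref{eq:lienpf} that $p_{1/n}(y,\theta)=\tfrac{\sqrt n}{\sigma}f_\alpha(\bar y,w)$. First I would establish three index-incrementing identities for the family $f_\alpha^{(k,l,m)}$: namely $w\,\partial_w f_\alpha^{(k,l,m)}=-f_\alpha^{(k,l+1,m)}$ and $\partial_\alpha f_\alpha^{(k,l,m)}=f_\alpha^{(k,l,m+1)}$ (both obtained by integration by parts in $z$ against $\partial_z[\mathcal D^{(l)}(\partial^m_{\alpha^m}\varphi_\alpha)]$, using the commutation $\partial_\alpha\mathcal D^{(l)}=\mathcal D^{(l)}\partial_\alpha$), together with $\mathcal D(f_\alpha^{(k,l,m)})=f_\alpha^{(k+1,l,m)}+f_\alpha^{(k,l+1,m)}$, where $\mathcal D$ acts on the first variable. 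Since $\partial_\sigma\bar y=-\bar y/\sigma$, $\partial_\sigma w=-w/\sigma$, $\partial_\delta w=w/\delta$ and $\partial_\alpha w=\tfrac{\ln n}{\alpha^2}w$, these identities translate into differentiation rules: $\partial_\sigma$ increments $k$ (with no $\ln n$ factor, the derivative of the $\sigma$-power prefactor reproducing only same-index terms), $\partial_\delta$ increments $l$, and $\partial_\alpha$ produces $f_\alpha^{(k,l,m+1)}-\tfrac{\ln n}{\alpha^2}f_\alpha^{(k,l+1,m)}$, that is, it increments $m$, or increments $l$ at the price of one factor $\ln n$.

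Iterating these rules, each derivative $\partial^{(j)}p_{1/n}$ ($j\le 3$) equals $\tfrac{\sqrt n}{\sigma}$ times a finite linear combination $\sum c_{k,l,m}(\theta)\ln(n)^{p}f_\alpha^{(k,l,m)}(\bar y,w)$ with coefficients bounded on a fixed $\theta_0$, or uniformly on a compact $V$. The bookkeeping then shows that the running index $m$ never exceeds the number of $\alpha$-derivatives, so $m\le 2$ for second and $m\le 3$ for third derivatives, and that as soon as at least one derivative is taken in $\delta$ or $\alpha$ every resulting term satisfies $l+m\ge 1$ (this is the reduction indicated in the text before the lemma, extended to order three). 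In the logarithmic derivatives the $\tfrac{\sqrt n}{\sigma}$ prefactors cancel, so $\partial^2_{ab}\ln p_{1/n}=\tfrac{\partial^2_{ab}p}{p}-\tfrac{\partial_a p}{p}\tfrac{\partial_b p}{p}$ and the analogous third-order formula become sums and products of the scalar ratios $\ln(n)^p\,\tfrac{f_\alpha^{(k,l,m)}}{f_\alpha}(\bar y,w)$.

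Next I would plug in $w=w_n(\theta)=n^{1/2-1/\alpha}\delta/\sigma$; since $\alpha$ stays bounded away from $2$ one has $w_n\to0$, so $w_n\le 1/3$ for $n$ large and $\ln(1/w_n)\le C\ln n$, which is the source of the $\ln(n)^p$ factors. Lemma \ref{L:MajFunc} then gives $|f_\alpha^{(k,0,0)}|\le C\ln(n)^k(\phi(\bar y)+w_n^\alpha\psi_\alpha^{(0)}(\bar y))\le C\ln(n)^k f_\alpha(\bar y,w_n)$, using the two-sided bound $\tfrac1C(\phi+w^\alpha\psi_\alpha^{(0)})\le f_\alpha\le C(\phi+w^\alpha\psi_\alpha^{(0)})$ of Lemma \ref{l:approxD}, hence $\tfrac{f_\alpha^{(k,0,0)}}{f_\alpha}\le C\ln(n)^k$; and for $l+m>0$, $|f_\alpha^{(k,l,m)}|\le C\ln(n)^m\,w_n^\alpha\psi_\alpha^{(m)}(\bar y)$. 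For the pure case $(a,b)=(\sigma,\sigma)$ (resp. $\sigma^3$) only functions $f_\alpha^{(k,0,0)}$ occur, so $|\partial^2_{\sigma\sigma}\ln p_{1/n}|\le C\ln(n)^p$ (resp. the third-order analogue). Off the pure-$\sigma$ direction every term carries a factor $\tfrac{w_n^\alpha\psi_\alpha^{(m)}}{f_\alpha}$; I would collapse the products using $\psi_\alpha^{(i)}\psi_\alpha^{(j)}\le C\psi_\alpha^{(0)}\psi_\alpha^{(i+j)}$ and the monotonicity $\psi_\alpha^{(i)}\le C\psi_\alpha^{(i')}$ for $i\le i'$, each time absorbing one extra denominator through $f_\alpha\ge\tfrac1C w_n^\alpha\psi_\alpha^{(0)}$, so that a product of ratios with $\psi$-indices $m_1,m_2,\dots$ reduces to $C\,\tfrac{w_n^\alpha\psi_\alpha^{(\sum m_i)}}{f_\alpha}$ with $\sum m_i\le 2$ (resp. $\le 3$). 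Replacing $f_\alpha$ in the denominator by its lower bound yields exactly $C\ln(n)^p\,\tfrac{w_n^\alpha\psi_\alpha^{(2)}(\bar y)}{\phi(\bar y)+w_n^\alpha\psi_\alpha^{(0)}(\bar y)}$, and its $\psi_\alpha^{(3)}$ counterpart for third derivatives.

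The main obstacle is the bookkeeping of the first two paragraphs: proving the three operator identities and verifying that, away from the pure-$\sigma$ direction, every term produced has $l+m\ge 1$ with $m$ capped by the differentiation order, while carefully tracking the powers of $\ln n$ generated by $\partial_\alpha w$. The second delicate point is the combinatorial collapse of products of ratios onto a single $\psi_\alpha^{(2)}$ (resp. $\psi_\alpha^{(3)}$) term, which rests on the precise interplay between the multiplicativity and monotonicity of the $\psi_\alpha^{(p)}$ and the two-sided control of $f_\alpha$ from Lemma \ref{l:approxD}, the transition region $1\le|\bar y|\le 3$ being handled by the $\mathcal C^2$ comparability of the $\psi_\alpha^{(p)}$.
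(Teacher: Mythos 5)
Your proposal is correct and follows essentially the same route as the paper's proof: the three index-incrementing identities you derive (via $w\partial_w$, $\partial_\alpha$, and $\mathcal{D}$ acting on $f_\alpha^{(k,l,m)}$) are exactly the differentiation rules for $\partial_\sigma$, $\partial_\delta$, $\partial_\alpha$ of $\tfrac{\sqrt{n}}{\sigma}f_\alpha^{(k,l,m)}$ that open the paper's argument, after which both proofs write the second and third logarithmic derivatives as sums and products of ratios $c_{k,l,m}(\theta)\ln(n)^p\,f_\alpha^{(k,l,m)}/f_\alpha$ with $l+m>0$ off the pure-$\sigma$ direction and conclude from Lemma \ref{L:MajFunc}, the two-sided bound \eqref{eq:majfghk}, and $\ln(1/w_n(\theta))\leq C\ln(n)$. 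The product-collapsing step via $\psi_\alpha^{(i)}\psi_\alpha^{(j)}\leq C\psi_\alpha^{(0)}\psi_\alpha^{(i+j)}$ and absorption of one denominator through $f_\alpha\geq \tfrac{1}{C}w_n^\alpha\psi_\alpha^{(0)}$, which you spell out, is left implicit in the paper but is the same mechanism.
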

In particular, for $a,b \in \{\sigma, \delta, \alpha \}$ and $(a,b)\neq(\sigma, \sigma)$, we deduce from \eqref{eq:boundder2} that for some $p>0$, $\forall y$
$$|\partial^2_{a b} \ln p_{1/n}(y, \theta_0) |\leq C \ln(n)^p \bigl(1 + \ln(1+|y|)^2 \bigr).$$
 For $(Y_t)$ defined by \eqref{E:Levy} the previous bound 
 implies that
$ \E^{\theta_0}|\partial^2_{a b} \ln p_{1/n}(\Delta _i^n Y, \theta_0) | \leq C \ln(n)^p.$
However, we later require that 
$$ \E^{\theta_0}|\partial^2_{a b} \ln p_{1/n}(\Delta _i^n Y, \theta_0) | \to 0,$$
with some rate. This can be obtained from \eqref{eq:boundder2}. Indeed, from \eqref{eq:lienpf} and \eqref{eq:majfghk}, we have
\begin{align*}
    \E^{\theta_0}|\partial^2_{a b} \ln p_{1/n}(\Delta _i^n Y, \theta_0) | &= \int |\partial^2_{a b} \ln p_{1/n}(y, \theta_0) |  p_{1/n}(y, \theta_0) dy\\
    \leq C & \int |\partial^2_{a b} \ln p_{1/n}(y, \theta_0) | \frac{\sqrt{n}}{\sigma_0}\bigl(\phi(\frac{\sqrt{n}}{\sigma_0} y)+w_n(\theta_0)^{\alpha_0}\psi_{\alpha_0}^{(0)}(\frac{\sqrt{n}}{\sigma_0} y)\bigr)dy.
\end{align*}
After a change of variable $\frac{\sqrt{n}}{\sigma_0}y \to y$, the bound given in \eqref{eq:boundder2} allows for a rate $\frac{\ln(n)^p}{n^{1-\alpha_0/2}}$ as the denominator is compensated when taking the expectation. The same holds for the third-order derivatives.

\section{Proofs} \label{S:proof}

\subsection{LAN Property} \label{Ss:PLAN}

We recall that $p_{1/n}$ (given by \eqref{eq:lienpf} with $w_n(\theta)=  n^{1/2 - 1/\alpha} \delta/\sigma $) is the density of $Y_{1/n}$.

\subsubsection{Proof of Proposition \ref{Th:TCL}}
We set  $u_n(\theta_0)^T   G_n(\theta_0) =\sum_{i=0}^{n-1} \zeta_{i}^n$ with $\zeta_{i}^{n} = (\zeta_{i}^{k,n})_{1 \leq k \leq 3}$ defined by
\begin{align}
    \zeta_{i}^{1,n} &= \frac{1}{\sqrt{n}} \frac{\partial_\sigma p_{1/n}}{p_{1/n}}(\Delta _i^n Y, \theta_0),   \nonumber \\
    \zeta_{i}^{2,n} &= \frac{\ln(n)^{\alpha_0/4}}{n^{\alpha_0/4}} \frac{\partial_\delta p_{1/n}}{p_{1/n}}(\Delta _i^n Y, \theta_0),  \label{eq:zeta} \\
    \zeta_{i}^{3,n} &= \frac{\ln(n)^{\alpha_0/4}}{n^{\alpha_0/4}} \left(-\frac{ \delta_0}{2 \alpha_0}\bigl(\ln(n) - \ln(\ln n) \bigr) \frac{\partial_\delta p_{1/n}}{p_{1/n}} + \frac{\partial_\alpha p_{1/n}}{p_{1/n}} \right) (\Delta _i^n Y, \theta_0).  \nonumber
\end{align}
Since the variables  $(\zeta_{i}^n)_{0 \leq i \leq n-1}$ are i.i.d. the convergence in law of $\sum_{i=0}^{n-1} \zeta_{i}^n$ is a consequence of the Lindeberg-Feller Central Limit Theorem (we refer to Hall and Heyde \cite{HallHeyde}) and it is sufficient to show
\begin{align}
    & \E^{\theta_0} (\zeta_0^{ n} ) = 0,  \label{Eq:point1cvTCL} \\
    &  n \; \E^{\theta_0} (|\zeta_{0}^{k, n}|^{3}) \to 0, \quad \text{for} \quad k=1,2,3, \label{Eq:point2cvTCL} \\
    & n \; \E^{\theta_0} (\zeta_0^{k, n} \zeta_0^{l, n}) \to I_{kl}(\theta_0) 
     \quad \text{for} \quad k,l=1,2,3, \label{Eq:point3cvTCL}
    \end{align}
with $I(\theta_0)$ defined by \eqref{eq:I}.

Equation \eqref{Eq:point1cvTCL} is immediate since $p_{1/n}(y, \theta_0)$ is the density of $\Delta_0^n Y=Y_{1/n}$, hence 
\begin{align*}
    \E^{\theta_0} (\zeta_0^{1,n}) = \frac{1}{\sqrt{n}}\int \frac{\partial_\sigma p_{1/n}}{p_{1/n}} p_{1/n}(y, \theta_0) dy = \frac{1}{\sqrt{n}}\partial_\sigma \left(\int p_{1/n}(y,\theta_0) dy \right) = 0,
\end{align*}
and similar results hold for $\zeta_{0}^{2,n}$ and $\zeta_{0}^{3,n}$.

We now prove equation \eqref{Eq:point2cvTCL}. After the change of variable $\frac{\sqrt{n}}{\sigma_0} y \to y$, we have using the bounds \eqref{eq:majfghk}
\begin{align*}
    \E^{\theta_0} (|\zeta_0^{1,n}|^3) &= \frac{1}{n^{3/2}} \int \frac{|\partial_\sigma p_{1/n}|^3}{|p_{1/n}|^2}(y, \theta_0) dy
    =  \frac{1}{n^{3/2}} \frac{1}{\sigma_0^3} \int  \frac{|g_{\alpha_0}|^3}{|f_{\alpha_0}|^2}(y, w_n( \theta_0))dy\\
    &\leq C \frac{(\ln(1/w_n( \theta_0)))^3}{n^{3/2}} \int  |\phi(y) + w_n( \theta_0)^{\alpha} \psi_{\alpha}^{(0)}(y)|dy,
\end{align*}
hence 
$$ n \; \E^{\theta_0} (|\zeta_0^{1,n}|^3) \leq C \frac{(\ln(n))^3}{\sqrt{n}} \to 0.$$
We similarly obtain
\begin{align*}
    \E^{\theta_0} (|\zeta_0^{2,n}|^3) &= \frac{\ln(n)^{3\alpha_0/4}}{n^{3 \alpha_0/4}} \int \frac{|\partial_\delta p_{1/n}|^3}{|p_{1/n}|^2}(y, \theta_0) dy 
     =   \frac{\ln(n)^{3\alpha_0/4}}{n^{3 \alpha_0/4}} \frac{1}{\delta_0^3} \int  \frac{|h_{\alpha_0}|^3}{|f_{\alpha_0}|^2}(y, w_n(\theta_0))dy \\
    &\leq C \frac{\ln(n)^{3\alpha_0/4}}{n^{3 \alpha_0/4}} \int  \frac{|w_n( \theta_0)^{\alpha} \psi_{\alpha}^{(0)}(y)|^3}{|\phi(y) + w_n( \theta_0)^{\alpha} \psi_{\alpha}^{(0)}(y)|^2}dy \\
    &\leq C \frac{\ln(n)^{3\alpha_0/4}}{n^{3 \alpha_0/4}} \frac{1}{n^{1 - \alpha_0/2}} \int  \psi_{\alpha_0}^{(0)}(y)dy,
\end{align*}
hence
$$ n \; \E^{\theta_0} (|\zeta_0^{2,n}|^3) \leq C \frac{\ln(n)^{3\alpha_0/4}}{n^{\alpha_0/4}} \to 0,$$
and the proof is the same for $\zeta_0^{3,n}$.

We now consider the convergence \eqref{Eq:point3cvTCL}.
\paragraph*{Sigma-sigma.} Using again that $p_{1/n}(y, \theta_0)$ is the density of $\Delta_0^n Y$, we have after a change of variable
\begin{align*}
    \E^{\theta_0}((\zeta_0^{1,n})^2) & = \frac{1}{n} \int \frac{(\partial_\sigma p_{1/n})^2}{p_{1/n}}(y, \theta_0) dy 
=    \frac{1}{n}  \frac{1}{\sigma_0^2} \int \frac{g_{\alpha_0}^2}{f_{\alpha_0}}(y, w_n(\theta_0))dy.
\end{align*}
From \eqref{eq:funcapprox}, $g_{\alpha_0}$ can be approximated by $\mathcal{D}(\phi)$ and $f_{\alpha_0}$ can be approximated by $\phi$.
Therefore, we prove that
\begin{equation}\label{eq:intsigma}
    \bigg| \int \frac{g_{\alpha_0}^2}{f_{\alpha_0}}(y, w_n(\theta_0))dy - \int \frac{(\mathcal{D}(\phi)(y))^2}{\phi(y)}dy \bigg| \leq C \frac{1}{n^{(1-\alpha_0/2)/2}}.
\end{equation}
We decompose the integral to get 
\begin{align*}
    \bigg| \int \frac{g_{\alpha_0}^2}{f_{\alpha_0}}(y, w_n(\theta_0))dy - \int \frac{(\mathcal{D}(\phi)(y))^2}{\phi(y)}dy \bigg| \leq C \bigg( &\int \frac{| g_{\alpha_0}(y, w_n( \theta_0)) - \mathcal{D}(\phi)(y)|^2}{f_{\alpha_0}(y, w_n( \theta_0))}dy\\
    + \int \frac{| g_{\alpha_0}(y, w_n( \theta_0)) - \mathcal{D}(\phi)(y)| |\mathcal{D}(\phi)(y)|}{f_{\alpha_0}(y, w_n( \theta_0))}dy
    &+ \int \bigg|\frac{(\mathcal{D}(\phi)(y))^2}{\phi(y)} - \frac{(\mathcal{D}(\phi)(y))^2}{f_{\alpha_0}(y, w_n( \theta_0))} \bigg|dy \bigg).
\end{align*}
Using \eqref{eq:funcapprox} and \eqref{eq:majfghk}, we have as $\int \psi_{\alpha_0}^{(0)} < \infty$
\begin{align*}
    \int \frac{| g_{\alpha_0}(y, w_n( \theta_0)) - \mathcal{D}(\phi)(y)|^2}{f_{\alpha_0}(y, w_n( \theta_0))}dy \leq C \int \frac{w_n( \theta_0)^{2\alpha_0} (\psi_{\alpha_0}^{(0)}(y))^2}{\phi(y) + w_n( \theta_0)^{\alpha_0} \psi_{\alpha_0}^{(0)}(y)}dy \leq C \frac{1}{n^{1-\alpha_0/2}}.
\end{align*}
Recalling that $\mathcal{D}(\phi)(y) = (1-y^2)\phi(y)$, we have using Cauchy–Schwarz inequality that
\begin{align*}
    \int &\frac{| g_{\alpha_0}(y, w_n( \theta_0)) - \mathcal{D}(\phi)(y)| \; |\mathcal{D}(\phi)(y)|}{f_{\alpha_0}(y, w_n( \theta_0))}dy \\
    & \qquad \qquad \leq \left( \int \frac{| g_{\alpha_0}(y, w_n( \theta_0)) - \mathcal{D}(\phi)(y)|^2}{f_{\alpha_0}(y, w_n( \theta_0))}dy \right)^{1/2} \left(\int \frac{(\mathcal{D}(\phi)(y))^2}{f_{\alpha_0}(y, w_n( \theta_0))}dy\right)^{1/2}\\
    &\qquad \qquad \leq C \frac{1}{n^{(1-\alpha_0/2)/2}} \left(\int (1+y^4)\phi(y)dy\right)^{1/2}  \leq C \frac{1}{n^{(1-\alpha_0/2)/2}}.
\end{align*}
By decomposing the integral and using \eqref{eq:funcapprox} and \eqref{eq:majfghk}, we have
\begin{align*}
    \int \bigg|\frac{(\mathcal{D}(\phi)(y))^2}{\phi(y)} - & \frac{(\mathcal{D}(\phi)(y))^2}{f_{\alpha_0}(y, w_n( \theta_0))} \bigg|dy 
    = \int\frac{(\mathcal{D}(\phi)(y))^2}{\phi(y)} \frac{|f_{\alpha_0}(y, w_n( \theta_0)) - \phi(y)|}{f_{\alpha_0}(y, w_n( \theta_0))} dy \\
   &\leq C \int\frac{(\mathcal{D}(\phi)(y))^2}{\phi(y)} \frac{w_n( \theta_0)^{\alpha_0} \psi_{\alpha_0}^{(0)}(y)}{\phi(y) + w_n( \theta_0)^{\alpha_0} \psi_{\alpha_0}^{(0)}(y)} dy \\
    &\leq C \left(w_n( \theta_0)^{\alpha_0} \int_{|y|\leq\Gamma_n} \frac{(\mathcal{D}(\phi)(y))^2}{\phi(y)^2}\psi_{\alpha_0}^{(0)}(y)dy
    + \int_{|y|>\Gamma_n} \frac{(\mathcal{D}(\phi)(y))^2}{\phi(y)} \right)\\
    & \leq C \Bigl(\frac{\Gamma_n^4}{n^{1-\alpha_0/2}} + \phi(\frac{\Gamma_n}{2}) \Bigr),
\end{align*}
and we conclude \eqref{eq:intsigma} by combining these results and taking $\Gamma_n = n^{(1-\alpha_0/2)/8}$.

Finally, using that $\mathcal{D}(\phi)(y) = (1-y^2)\phi(y)$, we get 
$$\int \frac{(\mathcal{D}(\phi)(y))^2}{\phi(y)}dy = \int (1-2y^2+y^4)\phi(y)dy = 1-2+3=2,$$
and we conclude.

\paragraph*{Delta-delta.}
As previously, we have after a change of variable
\begin{align*}
    \E^{\theta_0}((\zeta_0^{2,n})^2) & = \frac{\ln(n)^{\alpha_0/2}}{n^{\alpha_0/2}} \int \frac{(\partial_\delta p_{1/n})^2}{p_{1/n}}(y, \theta_0) dy 
     =\frac{\ln(n)^{\alpha_0/2}}{n^{\alpha_0/2}}  \frac{1}{\delta_0^2} \int \frac{(h_{\alpha_0})^2}{f_{\alpha_0}}(y, w_n(\theta_0))dy.
\end{align*}
From Proposition \ref{L:tout}, we get
\begin{align*}
   \bigg| \int \frac{(h_{\alpha_0})^2 }{f_{\alpha_0}} (y, w_n(\theta_0))dy 
   - \frac{\alpha_0^2 \kappa_0(\theta_0) }{n^{1 -\alpha_0/2 } \ln(n)^{\alpha_0/2}} \bigg|\leq  C \frac{1}{n^{1 -\alpha_0/2 } \ln(n)^{\alpha_0/2}} \frac{\ln(\ln n)}{\ln(n)^{1/4\wedge\alpha_0}},
\end{align*}
with $\kappa_0(\theta_0)=\frac{2  c_{\alpha_0}}{\alpha_0(2-\alpha_0)^{\alpha_0/2}} (\frac{ \delta_0 }{\sigma_0})^{\alpha_0}$ and we conclude.

\paragraph*{Delta-alpha.}
Similarly, we have 
\begin{align*}
     \E^{\theta_0}(\zeta_0^{2,n} \zeta_0^{3,n}) &=\frac{\ln(n)^{\alpha_0/2}}{n^{\alpha_0/2}}  \int \frac{(-\frac{\delta_0}{2 \alpha_0}[\ln(n) - \ln(\ln n)]\partial_\delta p_{1/n} +\partial_\alpha p_{1/n})  \partial_\delta p_{1/n}}{p_{1/n}}(y, \theta_0) dy \\
    &= - \frac{\ln(n)^{\alpha_0/2}}{n^{\alpha_0/2}} \frac{1}{\delta_0}\int \frac{h_{\alpha_0} l_{\alpha_0}}{f_{\alpha_0}}(y, w_n(\theta_0))dy,
\end{align*}
with $l_{\alpha}$ defined in \eqref{def:l}. From Lemma \ref{L:tout} we have
\begin{align*}
   \bigg| \int \frac{h_{\alpha_0} l_{\alpha_0}}{f_{\alpha_0}} (y, w_n(\theta_0))dy 
   &+ \frac{\alpha_0 \kappa_0(\theta_0) \kappa_1(\theta_0)}{n^{1 -\alpha_0/2 } \ln(n)^{\alpha_0/2}}\bigg| \leq  C \frac{1}{n^{1 -\alpha_0/2 } \ln(n)^{\alpha_0/2}} \frac{(\ln(\ln n))^2}{\ln(n)^{1/4\wedge\alpha_0}},
\end{align*}
with $\kappa_1(\theta_0)=\ln(\frac{ \delta_0 }{\sigma_0}) + \frac{ \partial_\alpha c_{\alpha_0}}{c_{\alpha_0}} - \frac{\ln(2-\alpha_0)}{2} - \frac{1}{\alpha_0}$ and we conclude.

\paragraph*{Alpha-alpha.}
We now turn to
\begin{align*}
 \E^{\theta_0}((\zeta_0^{3,n})^2 ) &=\frac{\ln(n)^{\alpha_0/2}}{n^{\alpha_0/2}} \int \frac{(-\frac{\delta_0}{2 \alpha_0}[\ln(n) - \ln(\ln n)]\partial_\delta p_{1/n} +\partial_\alpha p_{1/n})^2 }{p_{1/n}}(y, \theta_0) dy \\
 &= \frac{\ln(n)^{\alpha_0/2}}{n^{\alpha_0/2}} \int \frac{(l_{\alpha_0})^2}{f_{\alpha_0}} (y, w_n(\theta_0))dy.
\end{align*}
From Proposition \ref{L:tout} we have
\begin{align*}
    \bigg| \int \frac{(l_{\alpha_0})^2}{f_{\alpha_0}} (y, w)dy - &\frac{\kappa_0(\theta_0)(\kappa_1(\theta_0)^2+1/\alpha_0^2)}{n^{1 -\alpha_0/2 } \ln(n)^{\alpha_0/2}}\bigg| \leq  C \frac{1}{n^{1 -\alpha_0/2 } \ln(n)^{\alpha_0/2}} \frac{(\ln(\ln n))^3}{\ln(n)^{1/4\wedge\alpha_0}},
\end{align*}
and we conclude.

\paragraph*{Cross-terms $\sigma \delta$ and $\sigma \alpha$.}
We finally consider the cross-terms $\zeta_0^{1,n} \zeta_0^{2,n}$ and $\zeta_0^{1,n} \zeta_0^{3,n}$. We write again that
\begin{align*}
    \E^{\theta_0}(\zeta_0^{1,n} \zeta_0^{2,n}) & = \frac{1}{\sqrt{n}} \frac{\ln(n)^{\alpha_0/4}}{n^{\alpha_0/4}} \int \frac{\partial_\sigma p_{1/n} \partial_\delta p_{1/n}}{p_{1/n}}(y, \theta_0) dy \\
    & = \frac{1}{\sqrt{n}} \frac{\ln(n)^{\alpha_0/4}}{n^{\alpha_0/4}} \frac{1}{\sigma_0 \delta_0} \int \frac{g_{\alpha_0} h_{\alpha_0}}{f_{\alpha_0}}(y, w_n(\theta_0))dy .
\end{align*}
Using the bounds given in \eqref{eq:majfghk}, we have
\begin{align*}
    \bigg|\int \frac{g_{\alpha_0} h_{\alpha_0}}{f_{\alpha_0}}(y, w_n(\theta_0))dy\bigg|
    &\leq C \int \frac{\ln(1/w_n(\theta_0))(\phi(y) +w_n(\theta_0)^{\alpha_0} \psi_{\alpha_0}^{(0)}(y))w_n(\theta_0)^{\alpha_0}\psi_{\alpha_0}^{(0)}(y)}{\phi(y)+w_n(\theta_0)^{\alpha_0}\psi_{\alpha_0}^{(0)}(y)}dy\\
    &\leq C \ln(1/w_n(\theta_0))w_n(\theta_0)^{\alpha_0} \int \psi_{\alpha_0}^{(0)}(y)dy\\
    &\leq C\frac{\ln(n)}{n^{1-\alpha_0/2}}.
\end{align*}
Hence,
\begin{align*}
    n \; |\E^{\theta_0}(\zeta_0^{1,n} \zeta_0^{2,n})| \leq C \frac{n}{\sqrt{n}} \frac{\ln(n)^{\alpha_0/4}}{n^{\alpha_0/4}} \frac{\ln(n)}{n^{1-\alpha_0/2}} \leq C \frac{\ln(n)^{1+\alpha_0/4}}{n^{(1-\alpha_0/2)/2}} \to 0.
\end{align*}
The term $\zeta_0^{1,n} \zeta_0^{3,n}$ is done similarly.

\subsubsection{Proof of Proposition \ref{Th:CvIn}}

\noindent
\underline{Proof of \eqref{E:CvIn}.}

We set $u_n(\theta_0)^T J_n(\theta_0) u_n(\theta_0) = \sum_{i=0}^{n-1} \xi_{i}^n$ where the symmetric matrix $\xi_{i}^n=(\xi_i^{k l,n})_{1\leq k,l \leq 3}$ is given by
\begin{align*}
    \xi_i^{11,n} =& \frac{1}{n} \partial_\sigma \bigl(\frac{\partial_\sigma p_{1/n}}{p_{1/n}} \bigr) (\Delta _i^n Y, \theta_0), \\
    \xi_i^{12,n} =& \frac{\ln(n)^{\alpha_0/4}}{\sqrt{n}n^{\alpha_0/4}} \partial_\sigma \bigl(\frac{\partial_\delta p_{1/n}}{p_{1/n}}\bigr) (\Delta _i^n Y, \theta_0),\\
    \xi_i^{13,n} =& \frac{\ln(n)^{\alpha_0/4}}{\sqrt{n}n^{\alpha_0/4}} \left(-\frac{\delta_0}{2 \alpha_0}\bigl[\ln(n) - \ln(\ln n)\bigr]\partial_\sigma \bigl(\frac{\partial_\delta p_{1/n}}{p_{1/n}}\bigr) + \partial_\sigma \bigl(\frac{\partial_\alpha p_{1/n}}{p_{1/n}}\bigr) \right) (\Delta _i^n Y, \theta_0),
    \end{align*}
  \begin{align*}
    \xi_i^{22,n} =& \frac{\ln(n)^{\alpha_0/2}}{n^{\alpha_0/2}} \partial_\delta \bigl(\frac{\partial_\delta p_{1/n}}{p_{1/n}}\bigr) (\Delta _i^n Y, \theta_0), \\
    \xi_i^{23,n} = & \frac{\ln(n)^{\alpha_0/2}}{n^{\alpha_0/2}} \left(-\frac{\delta_0}{2 \alpha_0}\bigl[\ln(n) - \ln(\ln n)\bigr]\partial_\delta \bigl(\frac{\partial_\delta p_{1/n}}{p_{1/n}}\bigr) + \partial_\delta \bigl(\frac{\partial_\alpha p_{1/n}}{p_{1/n}}\bigr) \right) (\Delta _i^n Y, \theta_0),\\
    \xi_i^{33,n} = &\frac{\ln(n)^{\alpha_0/2}}{n^{\alpha_0/2}} \bigg(\frac{\delta_0^2}{(2\alpha_0)^2}\bigl[\ln(n) - \ln(\ln n)\bigr]^2\partial_\delta \bigl(\frac{\partial_\delta p_{1/n}}{p_{1/n}}\bigr)\\
    \quad & - \frac{\delta_0}{\alpha_0}\bigl[\ln(n) - \ln(\ln n)\bigr]\partial_\alpha \bigl(\frac{\partial_\delta p_{1/n}}{p_{1/n}}\bigr) + \partial_\alpha \bigl(\frac{\partial_\alpha p_{1/n}}{p_{1/n}}\bigr) \bigg) (\Delta _i^n Y, \theta_0).
\end{align*}
The variables $(\xi_i^{n})_{0 \leq i \leq n-1}$ are i.i.d. and in order to prove that
$$u_n(\theta_0)^T J_n(\theta_0) u_n(\theta_0) \xrightarrow[]{\PP^{\theta_0}} - I(\theta_0),$$
we just check that for $1\leq k,l \leq 3$ 
\begin{align}
n &\E^{\theta_0}( \xi_0^{k l,n})\to - I_{kl}(\theta_0), \label{eq:Espi}\\
n &\E^{\theta_0}(|\xi_0^{k l,n}|^2) \to 0. \label{eq:Espi2} 
\end{align}

\paragraph*{Proof of \eqref{eq:Espi}.}
Using that $p_{1/n}(y, \theta_0)$ is the density of $\Delta _i^n Y$, we have
\begin{align*}
    \E^{\theta_0}\left(\partial_{a} \frac{\partial_{b} p_{1/n}}{p_{1/n}} (\Delta _i^n Y, \theta_0)\right)
    &= \int \frac{\partial^2_{a b} p_{1/n}  p_{1/n} -\partial_{a} p_{1/n} \partial_{b} p_{1/n} }{p_{1/n}} (y, \theta_0)dy\\
    &= \partial_{a} \Bigl(\int\partial_{b} p_{1/n}(y, \theta_0)dy\Bigr) - \int \frac{\partial_{a} p_{1/n} \partial_{b} p_{1/n} }{p_{1/n}} (y, \theta_0)dy\\
    &= -\int  \frac{\partial_{a} p_{1/n} \partial_{b} p_{1/n} }{p_{1/n}} (y, \theta_0)dy.
\end{align*}
Hence,
\begin{align*}
    \E^{\theta_0}(\xi_{0}^{11,n}) =&- \frac{1}{n} \int \frac{(\partial_\sigma p_{1/n})^2}{p_{1/n}} (y, \theta_0)dy,\\
    \E^{\theta_0}(\xi_{0}^{12,n}) =&- \frac{\ln(n)^{\alpha_0/4}}{\sqrt{n}n^{\alpha_0/4}} \int \frac{\partial_\sigma p_{1/n}\partial_\delta p_{1/n}}{p_{1/n}} (y, \theta_0)dy,\\
    \E^{\theta_0}(\xi_{0}^{13,n}) =&- \frac{\ln(n)^{\alpha_0/4}}{\sqrt{n}n^{\alpha_0/4}} \int \frac{\partial_\sigma p_{1/n}(-\frac{\delta_0}{2 \alpha_0}[\ln(n) - \ln(\ln n)]\partial_\delta p_{1/n} + \partial_\alpha p_{1/n} )}{p_{1/n}}  (y, \theta_0)dy,\\
    \E^{\theta_0}(\xi_{0}^{22,n}) =& -\frac{\ln(n)^{\alpha_0/2}}{n^{\alpha_0/2}} \int \frac{(\partial_\delta p_{1/n})^2}{p_{1/n}} (y, \theta_0)dy,\\
    \E^{\theta_0}(\xi_{0}^{23,n}) = & -\frac{\ln(n)^{\alpha_0/2}}{n^{\alpha_0/2}} \int \frac{\partial_\delta p_{1/n}(-\frac{\delta_0}{2 \alpha_0}[\ln(n) - \ln(\ln n)]\partial_\delta p_{1/n} + \partial_\alpha p_{1/n} )}{p_{1/n}} (y, \theta_0)dy,\\
    \E^{\theta_0}(\xi_{0}^{33,n}) = &-\frac{\ln(n)^{\alpha_0/2}}{n^{\alpha_0/2}} \int \frac{(-\frac{\delta_0}{2 \alpha_0}[\ln(n) - \ln(\ln n)]\partial_\delta p_{1/n} + \partial_\alpha p_{1/n} )^2}{p_{1/n}} (y, \theta_0)dy,
\end{align*}
and the proof of $n \E^{\theta_0}(\xi_0^{k l,n})\to - I_{kl}(\theta_0)$ for $1\leq k, l\leq 3$ has been done in equation \eqref{Eq:point3cvTCL}.

\paragraph*{Proof of \eqref{eq:Espi2}.}
Using Lemma \ref{L:der2der3}, $\exists p >0$
$$n\,\E^{\theta_0}\bigl(|\xi_{0}^{11,n}|^2\bigr)=\frac{1}{n} \E^{\theta_0}\bigl(|\partial^2_{\sigma \sigma} \ln p_{1/n}(\Delta _0^n Y, \theta_0)|^2\bigr) \leq \frac{C \ln(n)^p}{n} \to 0.$$
Similarly, using a change of variable $\frac{\sqrt{n}}{\sigma_0} y \to y$, \eqref{eq:majfghk} and Lemma \ref{L:der2der3}, we have for $a,b \in \{\sigma, \delta, \alpha \}$ with $(a,b)\neq(\sigma, \sigma)$, $\exists p >0$
\begin{align*}
    \E^{\theta_0}\bigl(|\partial^2_{a b} \ln p_{1/n}(\Delta _0^n Y, \theta_0)|^2\bigr) &= \int |\partial^2_{a b} \ln p_{1/n}(y, \theta_0)|^2 p_{1/n}(y, \theta_0) dy \\
    \leq C \ln(n)^p & \int \left(\frac{w_n(\theta_0)^{\alpha_0}\psi_{\alpha_0}^{(2)}(y)}{\phi(y)+w_n(\theta_0)^{\alpha_0}\psi_{\alpha_0}^{(0)}(y)}\right)^2 \bigl(\phi(y)+w_n(\theta_0)^{\alpha_0}\psi_{\alpha_0}^{(0)}(y) \bigr)dy\\
    \leq C \ln(n)^p & w_n(\theta_0)^{\alpha_0} \int \psi_{\alpha_0}^{(4)}(y)dy \leq \frac{C \ln(n)^p}{n^{1-\alpha_0/2}}.
\end{align*}
This allows us to conclude that for $(k,l)\neq(1,1)$, $n\,\E^{\theta_0}\bigl(|\xi_{0}^{kl,n}|^2\bigr) \to 0$.


\noindent
\underline{Proof of \eqref{E:CvUnif}.}

The matrix  $J_n(\theta)$ is given by
$$J_n(\theta)  = \sum_{i=0}^{n-1} {\small \begin{pmatrix} \partial^2_{\sigma \sigma} \ln p_{1/n} & \partial^2_{ \delta \sigma} \ln p_{1/n} & \partial^2_{\alpha\sigma } \ln p_{1/n} \\
    \partial^2_{\sigma\delta } \ln p_{1/n} & \partial^2_{\delta \delta} \ln p_{1/n}& \partial^2_{\alpha\delta } \ln p_{1/n}\\ 
    \partial^2_{\sigma\alpha } \ln p_{1/n} & \partial^2_{\delta\alpha } \ln p_{1/n} & \partial^2_{\alpha \alpha}\ln p_{1/n} \end{pmatrix}}(\Delta _i^n Y, \theta).$$
From Taylor's formula, we have for $a,b \in \{\sigma, \delta, \alpha \}$
\begin{align}
    \partial^2_{a b} \ln p_{1/n}(y, \theta) - \partial^2_{a b} \ln p_{1/n}(y, \theta_0)
&= (\sigma-\sigma_0) \int_0^1 \partial^3_{\sigma a b} \ln p_{1/n} (y, \theta_0 +  t(\theta-\theta_0)) dt \nonumber \\
+ (\delta-\delta_0) &\int_0^1 \partial^3_{\delta a b} \ln p_{1/n} (y, \theta_0 + t(\theta-\theta_0)) dt \nonumber \\
+ (\alpha-\alpha_0) &\int_0^1 \partial^3_{\alpha a b} \ln p_{1/n} (y, \theta_0 + t(\theta-\theta_0)) dt.\label{eq:Taylord2}
\end{align}
We have using the definition of $V_n^{(r)}$ in \eqref{def:V} that $\exists C>0, \; \exists p>0$, $\forall \theta \in V_n^{(r)},$
\begin{align}\label{eq:majVois}
    |\sigma - \sigma_0|\leq \frac{C}{\sqrt{n}}, \quad 
    |\delta - \delta_0|\leq \frac{C (\ln(n))^p}{n^{\alpha_0/4}}, \quad
    |\alpha - \alpha_0|\leq \frac{C (\ln(n))^p}{n^{\alpha_0/4}},
\end{align}
and we note that $1/\sqrt{n}$ is negligible compared to $(\ln(n))^p/n^{\alpha_0/4}$.

We first consider $\bigl(u_n(\theta_0)^T (J_n(\theta) - J_n(\theta_0)) u_n(\theta_0)\bigr)_{11}$ which has the strongest rate $1/n$. Combining Lemma \ref{L:der2der3}, \eqref{eq:Taylord2} and \eqref{eq:majVois}, we get that $\exists C>0, \; \exists p>0, \forall y$
$$\sup_{\theta \in V_n^{(r)} } |\partial^2_{\sigma \sigma} \ln p_{1/n}(y, \theta) - \partial^2_{\sigma \sigma} \ln p_{1/n}(y, \theta_0)| \leq C \ln(n)^{p} \Bigl(\frac{1}{\sqrt{n}} +\frac{1}{n^{\alpha_0/4}} \Bigr)\bigl(1 + |\ln|y||^3 {\bf 1}_{|y|>1}\bigr),$$
which allows us to conclude that $\exists C>0, \; \exists p>0$
\begin{align*}
    \E^{\theta_0} \Bigl|\sup_{\theta \in V_n^{(r)} }\bigl(u_n(\theta_0)^T (J_n(\theta) - J_n(\theta_0)) u_n(\theta_0)\bigr)_{11}\Bigr| \leq C\frac{n}{n} \frac{\ln(n)^{p}}{n^{\alpha_0/4}} \to 0.
\end{align*}

We now turn to $\bigl(u_n(\theta_0)^T (J_n(\theta) - J_n(\theta_0)) u_n(\theta_0)\bigr)_{kl}$ 
for $(k,l)\neq (1,1)$, which has the rate $\frac{1}{n^{ \alpha_0/2}}$ up to a factor $\ln(n)$. First, recalling that
$$w_n(\theta)^{\alpha} = \frac{\delta^\alpha}{\sigma^\alpha} \frac{1}{n^{1-\alpha/2}} = w_n(\theta_0)^{\alpha_0} \frac{\delta^\alpha \sigma_0^{\alpha_0}}{\sigma^\alpha \delta_0^{\alpha_0}} e^{(\alpha-\alpha_0)\ln(n)/2},$$
we get using \eqref{eq:majVois} that $\exists C>0$
\begin{equation} \label{E:Bw}
    \frac{1}{C}w_n(\theta_0)^{\alpha_0} \leq \inf_{\theta \in V_n^{(r)} } w_n(\theta)^{\alpha} \leq \sup_{\theta \in V_n^{(r)} } w_n(\theta)^{\alpha} \leq C w_n(\theta_0)^{\alpha_0}.
\end{equation}
Moreover, $\forall p>0, \; \forall \varepsilon, \eta>0, \; \exists C>0, \; \forall y$
\begin{align} \label{E:Bphi}
    \frac{1}{C} \psi_{\alpha_0+\varepsilon}^{(p)}(\frac{\sqrt{n}}{\sigma_0} y) \leq \inf_{\theta \in V_n^{(r)} }\psi_{\alpha}^{(p)} (\frac{\sqrt{n}}{\sigma} y) &\leq \sup_{\theta \in V_n^{(r)} }\psi_{\alpha}^{(p)} (\frac{\sqrt{n}}{\sigma} y)\leq C \psi_{\alpha_0-\varepsilon}^{(p)}(\frac{\sqrt{n}}{\sigma_0} y), \nonumber\\
\phi((1+\eta)\frac{\sqrt{n}}{\sigma_0} y) &\leq \inf_{\theta \in V_n^{(r)} }\phi(\frac{\sqrt{n}}{\sigma} y),
\end{align}
for $n$ large enough.
Combining these results with \eqref{eq:Taylord2}, \eqref{eq:majVois} and Lemma \ref{L:der2der3}, we have for $a,b \in \{\sigma, \delta, \alpha \}$ and $(a,b)\neq (\sigma, \sigma)$, $\forall \varepsilon, \eta>0, \; \exists C>0, \; \exists p>0, \forall y$ and for $n$ large enough
\begin{align*}
    \sup_{\theta \in V_n^{(r)} } |\partial^2_{a b} \ln p_{1/n}(y, \theta) &- \partial^2_{a b} \ln p_{1/n}(y, \theta_0)|\\
    & \leq C \ln(n)^p \Bigl(\frac{1}{\sqrt{n}} +\frac{1}{n^{\alpha_0/4}} \Bigr) \sup_{\theta \in V_n^{(r)} }  \frac{w_n(\theta)^{\alpha}\psi_{\alpha}^{(3)} (\frac{\sqrt{n}}{\sigma} y)}{\phi(\frac{\sqrt{n}}{\sigma} y)+w_n(\theta)^{\alpha}\psi_{\alpha}^{(0)}(\frac{\sqrt{n}}{\sigma} y)}\\
    &\leq C \frac{\ln(n)^{p}}{n^{\alpha_0/4}}  \frac{w_n(\theta_0)^{\alpha_0}\psi_{\alpha_0-\varepsilon}^{(3)} (\frac{\sqrt{n}}{\sigma_0} y)}{\phi((1+\eta)\frac{\sqrt{n}}{\sigma_0} y)+w_n(\theta_0)^{\alpha_0}\psi_{\alpha+\varepsilon}^{(0)} (\frac{\sqrt{n}}{\sigma_0} y)}.
\end{align*}
Using \eqref{eq:majfghk} and the change of variable $\frac{\sqrt{n}}{\sigma_0} y \to y$, we have
\begin{align*}
    \E^{\theta_0}\Bigl| \sup_{\theta \in V_n^{(r)} }&\bigl(u_n(\theta_0)^T (J_n(\theta) - J_n(\theta_0)) u_n(\theta_0)\bigr)_{kl} \Bigr| \\
    &\leq C n \ln(n)^{p} \frac{1}{n^{3 \alpha_0/4}} 
    w_n(\theta_0)^{\alpha_0} \int  \psi_{\alpha_0-\varepsilon}^{(3)}(y)
    \frac{\phi(y)+w_n(\theta_0)^{\alpha_0}\psi_{\alpha_0}^{(0)}(y)}{\phi((1+\eta)y)+w_n(\theta_0)^{\alpha_0}\psi_{\alpha_0+\varepsilon}^{(0)}(y)} dy\\
    &\leq C \frac{\ln(n)^{p}}{n^{\alpha_0/4}} \int  \psi_{\alpha_0-2\varepsilon}^{(3)}(y)
    \frac{\phi(y)+w_n(\theta_0)^{\alpha_0}\psi_{\alpha_0}^{(0)}(y)}{\phi((1+\eta)y)+w_n(\theta_0)^{\alpha_0}\psi_{\alpha_0}^{(0)}(y)} dy.
\end{align*}
We now bound this integral.
We define $y^*(w_n(\theta_0))$ as the solution of $\phi(y)=w_n(\theta_0)^{\alpha_0} \psi_{\alpha_0}^{(0)}(y)$, then $y^*(w_n(\theta_0)) \underset{n\to + \infty}{\sim}\sqrt{(2-\alpha_0) \ln(n)}:=M_n$.  We note that $y \to \frac{\phi(y)}{\psi_{\alpha_0}^{(0)}(y)}$ is decreasing on $(0, + \infty)$ after a certain rank, hence
\begin{align*}
    \forall |y|< y^*(w_n(\theta_0)), \quad   
    &\phi(y)+w_n(\theta_0)^{\alpha_0}\psi_{\alpha_0}^{(0)}(y) \leq C \phi(y),\\
    \forall |y|\geq y^*(w_n(\theta_0)), \quad 
    &\phi(y)+w_n(\theta_0)^{\alpha_0}\psi_{\alpha_0}^{(0)}(y) \leq C w_n(\theta_0)^{\alpha_0}\psi_{\alpha_0}^{(0)}(y).
\end{align*}
By decomposing the integral, we write
\begin{align*}
    \int  \psi_{\alpha_0-2\varepsilon}^{(3)}(y)&
    \frac{\phi(y)+w_n(\theta_0)^{\alpha_0}\psi_{\alpha_0}^{(0)}(y)}{\phi((1+\eta)y)+w_n(\theta_0)^{\alpha_0}\psi_{\alpha_0}^{(0)}(y)} dy \\
    &\qquad \leq C \Bigl( \int_{|y|< y^*(w_n(\theta_0))} \psi_{\alpha_0-2\varepsilon}^{(3)}(y) \frac{\phi(y)}{\phi((1+\eta)y)} dy 
    + \int_{|y|\geq y^*(w_n(\theta_0))} \psi_{\alpha_0-2\varepsilon}^{(3)}(y)dy\Bigr)\\
    &\qquad \leq C \Bigl(\int_{|y|< y^*(w_n(\theta_0))} e^{(2\eta + \eta^2) y^2/2}dy + \int_{|y|\geq y^*(w_n(\theta_0))} \psi_{\alpha_0-2\varepsilon}^{(3)}(y)dy\Bigr)\\
    &\qquad \leq C \Bigl( y^*(w_n(\theta_0))e^{(2\eta + \eta^2) y^*(w_n(\theta_0))^2/2} + \int \psi_{\alpha_0-2\varepsilon}^{(3)}(y)dy\Bigr).
\end{align*}
We choose $\varepsilon>0$ such that $2\varepsilon < \alpha_0$, and $\eta>0$ such that $\frac{(2\eta + \eta^2)(2-\alpha)}{2} < \frac{\alpha_0}{4} $. Then $\int \psi_{\alpha_0-2\varepsilon}^{(3)} <C$ and $\frac{\ln(n)^{p}}{n^{\alpha_0/4}} M_{n}e^{(2\eta + \eta^2) M_{n}^2/2} \to 0$, and we conclude that
$$\E^{\theta_0}\Bigl| \sup_{\theta \in V_n^{(r)} }\bigl(u_n(\theta_0)^T (J_n(\theta) - J_n(\theta_0)) u_n(\theta_0)\bigr)_{kl} \Bigr| \to 0.$$

\subsection{SDE estimation} \label{Ss:PSDE}
\subsubsection{Proof of Theorem \ref{Th:estEDS}}
In this proof the convergences in probability and the expectations are taken under $\PP^{\theta_0}$ but we write simply $\PP$ and $\E$ (or $\E_i$ for the conditional expectation).
We recall that  $u_n(\theta)$ is defined in \eqref{eq:un} and  $\tilde{G_n}(\theta)$ in \eqref{E:tildeG}. We set $\tilde{J}_n(\theta)=\nabla_{\theta} \tilde{G}_n(\theta)$. With this notation the result of Theorem \ref{Th:estEDS} is a consequence of the two following convergences (we refer to S{\o}rensen \cite{Sorensen} and to Jacod and S{\o}rensen \cite{JacodSorensenUniforme}).

\begin{enumerate}
\item $(u_n(\theta_0)^T \tilde{G}_n(\theta_0))_n$ stably converges in law with respect to the $\sigma$-field $\sigma(L^{\alpha_0}_s, s \leq 1)$ to $\overline{I}(\theta_0)^{1/2} \mathcal{N}$ where $\mathcal{N}$ is a standard Gaussian variable independent of $\overline{I}(\theta_0)$.
    \item 
    With $V_n^{(r)}$ defined by \eqref{def:V} for $r>0$,  we have the following convergence 
\[\sup_{\theta \in V_n^{(r)} } \left| \left|  u_n(\theta_0)^T \tilde{J}_n(\theta) u_n(\theta_0) + \overline{I}(\theta_0) \right|\right| \xrightarrow[]{\PP} 0.\]
\end{enumerate}

Before proceeding to the proof, we  recall  the following useful result to prove convergence in probability of triangular arrays (see \cite{JacodProtter}).
Let $(\zeta_i^n )$ be a triangular array such that $\zeta_i^n$ is $\mathcal{F}_{\frac{i+1}{n}}$-measurable then the two following conditions imply the convergence $\sum_{i=0}^{n-1} \zeta_i^n \xrightarrow[]{\PP} 0$ 
\begin{equation} \label{E:i}
\sum_{i=0}^{n-1} \vert \mathbb{E}_{ i} \zeta_i^n \vert \xrightarrow[]{\PP} 0, \quad  \quad 
\sum_{i=0}^{n-1} \mathbb{E}_{ i} \vert \zeta_i^n \vert^2  \xrightarrow[]{\PP} 0. 
\end{equation}
We also use  a standard localization procedure, setting $T_K= \inf \{ t \geq 0 ; |X_t| >K \text{ or } | \Delta L_t^{\alpha_0, \tau} | > K \}$ and $\Omega_K=\{ T_K>1\}$, which allows considering that the process is bounded and that the L\'evy process admits the representation (using also  the symmetry of the L\'evy measure)
$$
L_t^{\alpha_0}= \int_0^t \int_{ |z| \leq K} z \tilde{N}^{\alpha_0, \tau}(dz,ds) ,\quad t \in [0,1]
$$
where $\tilde{N}^{\alpha, \tau}$ is a compensated Poisson random measure with compensator $\overline{N}^{\alpha, \tau}(dz,ds)=F^{\alpha, \tau}(dz) ds$.
The localization  ensures  that the assumptions of Proposition \ref{P:euler} are satisfied.

\noindent
1. \underline{Stable convergence in law of  $(u_n(\theta_0)^T \tilde{G}_n(\theta_0))_n$.}

Using the notation of Section \ref{S:Section2} we have
\begin{eqnarray*}
u_n(\theta_0)^T  \tilde{G}_n(\theta_0)= \left(
\begin{array}{c}
-\frac{1}{\sqrt{n}} \sum_{i=0}^{n-1} \frac{\partial_{\sigma} a(X_{i/n}, \sigma_0)}{a(X_{i/n},\sigma_0)} \frac{g_{\alpha_0}}{f_{\alpha_0}}\Bigl(\frac{\sqrt{n}}{a(X_{i/n},\sigma_0)} 
\Delta _i^n X, w_n(X_{i/n},\theta_0)\Bigr) \\
-\frac{\ln(n)^{\alpha_0/4}}{n^{\alpha_0/4}} \sum_{i=0}^{n-1} \frac{1}{\delta_0} \frac{h_{\alpha_0}}{f_{\alpha_0}}\Bigl(\frac{\sqrt{n}}{a(X_{i/n},\sigma_0)} \Delta _i^n X, w_n(X_{i/n},\theta_0)\Bigr)\\
 \frac{\ln(n)^{\alpha_0/4}}{n^{\alpha_0/4}}\sum_{i=0}^{n-1} \frac{l_{\alpha_0}}{f_{\alpha_0}}\Bigl(\frac{\sqrt{n}}{a(X_{i/n},\sigma_0)} \Delta _i^n X, w_n(X_{i/n},\theta_0)\Bigr)
\end{array}
\right)
\end{eqnarray*}
with $f_{\alpha}, g_{\alpha}, h_{\alpha}$ defined by \eqref{def:fghk}, $l_{\alpha}$ defined by \eqref{def:l} and 
$$
w_n(x,\theta)=n^{1/2-1/ \alpha} \frac{\delta c(x)}{a(x, \sigma)}.
$$
The functions $f_{\alpha}, g_{\alpha}, h_{\alpha}, k_{\alpha}$ are even and $\mathcal{C}^2$, with 
first and second order derivatives   given by  (using the integration by part formula)
\begin{align*}
&f'_{\alpha}(y, w) = w^{-1}\int \phi(y- w z) \varphi'_{\alpha}(z) dz, &f^{\prime \prime}_{\alpha}(y, w) =w^{-2} \int \phi(y- w z) \varphi^{\prime \prime}_{\alpha}(z) dz,\\
&g'_{\alpha}(y, w) = w^{-1}\int \mathcal{D}(\phi)(y- w z) \varphi'_{\alpha}(z) dz, &g^{\prime \prime}_{\alpha}(y, w) =w^{-2} \int \mathcal{D}(\phi)(y- w z) \varphi^{\prime \prime}_{\alpha}(z) dz,\\
&h'_{\alpha}(y, w) = w^{-1}\int \phi(y- w z) \mathcal{D}(\varphi_{\alpha})'(z) dz, &h^{\prime \prime}_{\alpha}(y, w) =w^{-2} \int \phi(y- w z) \mathcal{D}(\varphi_{\alpha})^{\prime \prime}(z) dz,\\
&k'_{\alpha}(y, w) = w^{-1}\int \phi(y- w z) \partial_{\alpha}\varphi'_{\alpha}(z) dz, &k^{\prime \prime}_{\alpha}(y, w) =w^{-2} \int \phi(y- w z) \partial_{\alpha}\varphi^{\prime \prime}_{\alpha}(z) dz.
\end{align*}
With similar arguments as the one given in the proof of Lemma \ref{l:approxD}, we can check that
$\forall w \in (0, 1/3], \, \forall y \in \R$
\begin{align*}
&|f'_{\alpha}(y, w)| \leq Cw^{\alpha} (\phi(y)+\psi_{\alpha+1}^{(0)}(y)), &|f^{\prime \prime}_{\alpha}(y, w) | \leq  Cw^{\alpha} (\phi(y)+\psi_{\alpha+2}^{(0)}(y)),\\
&|g'_{\alpha}(y, w)| \leq Cw^{\alpha} (\mathcal{D}(\phi)(y)+\psi_{\alpha+1}^{(0)}(y)), &|g^{\prime \prime}_{\alpha}(y, w) | \leq  Cw^{\alpha} (\mathcal{D}(\phi)(y)+\psi_{\alpha+2}^{(0)}(y)),\\
&|h'_{\alpha}(y, w)| \leq Cw^{\alpha} (\phi(y)+\psi_{\alpha+1}^{(0)}(y)), &|h^{\prime \prime}_{\alpha}(y, w) | \leq  Cw^{\alpha} (\phi(y)+\psi_{\alpha+2}^{(0)}(y)),\\
&|k'_{\alpha}(y, w)| \leq Cw^{\alpha} \ln(1/w)(\phi(y)+\psi_{\alpha+1}^{(1)}(y)), &|k^{\prime \prime}_{\alpha}(y, w) | \leq  Cw^{\alpha} \ln(1/w)(\phi(y)+\psi_{\alpha+2}^{(1)}(y)).
\end{align*}
Combining the previous calculus with \eqref{eq:majfghk}, we deduce $\forall w \in (0, 1/3], \, \forall y \in \R$
\begin{eqnarray} \label{E:Bprime}
 |\frac{g_{\alpha}}{f_{\alpha}}(y,w)| + |(\frac{g_{\alpha}}{f_{\alpha}})'(y,w)| +  |(\frac{g_{\alpha}}{f_{\alpha}})^{\prime \prime}(y,w)| \leq C \ln(1/w),  \quad \quad  \quad \quad \nonumber\\
  |\frac{h_{\alpha}}{f_{\alpha}}(y,w)|+ |(\frac{h_{\alpha}}{f_{\alpha}})'(y,w)|+ |(\frac{h_{\alpha}}{f_{\alpha}})^{\prime \prime}(y,w)| \leq C , \quad \quad  \quad \quad\\
 |\frac{k_{\alpha}}{f_{\alpha}}(y,w)|\leq C \ln(1/w)(1+ \ln(1+|y|)), \quad |(\frac{k_{\alpha}}{f_{\alpha}})'(y,w)|+ |(\frac{k_{\alpha}}{f_{\alpha}})^{\prime \prime}(y,w)| \leq C \ln(1/w). \nonumber
\end{eqnarray}
So the functions $\frac{g_{\alpha_0}}{f_{\alpha_0}}( \cdot,w_n(x,\theta_0)) $, $\frac{h_{\alpha_0}}{f_{\alpha_0}}( \cdot,w_n(x,\theta_0))$ and  $\frac{l_{\alpha_0}}{f_{\alpha_0}}( \cdot,w_n(x,\theta_0))$ satisfy the assumption of Proposition \ref{P:euler} and checking the conditions \eqref{E:i} (we omit the details) we conclude that
$$
u_n(\theta_0)^T  \tilde{G}_n(\theta_0)-u_n(\theta_0)^T  \overline{G}_n(\theta_0) \xrightarrow[]{\PP} 0,
$$ 
where
\begin{eqnarray*}
u_n(\theta_0)^T  \overline{G}_n(\theta_0)= \left(
\begin{array}{c}
-\frac{1}{\sqrt{n}} \sum_{i=0}^{n-1} \frac{\partial_{\sigma} a(X_{i/n}, \sigma_0)}{a(X_{i/n},\sigma_0)} \frac{g_{\alpha_0}}{f_{\alpha_0}}\Bigl(\frac{\sqrt{n}}{a(X_{i/n},\sigma_0)} 
\Delta _i^n \overline{X}, w_n(X_{i/n},\theta_0)\Bigr) \\
-\frac{\ln(n)^{\alpha_0/4}}{n^{\alpha_0/4}} \sum_{i=0}^{n-1} \frac{1}{\delta_0} \frac{h_{\alpha_0}}{f_{\alpha_0}}\Bigl(\frac{\sqrt{n}}{a(X_{i/n},\sigma_0)} \Delta _i^n \overline{X}, w_n(X_{i/n},\theta_0)\Bigr)\\
 \frac{\ln(n)^{\alpha_0/4}}{n^{\alpha_0/4}}\sum_{i=0}^{n-1} \frac{l_{\alpha_0}}{f_{\alpha_0}}\Bigl(\frac{\sqrt{n}}{a(X_{i/n},\sigma_0)} \Delta _i^n \overline{X}, w_n(X_{i/n},\theta_0)\Bigr)
\end{array}
\right),
\end{eqnarray*}
with $\Delta _i^n \overline{X}$ given by \eqref{E:euler}. It remains to study the stable convergence in law of  $(u_n(\theta_0)^T  \overline{G}_n(\theta_0))_n$. 
We set $u_n(\theta_0)^T  \overline{G}_n(\theta_0)= \sum_{i=0}^{n-1} \overline{\zeta}_i^n$ with
$\overline{\zeta}_{i}^{n} = (\overline{\zeta}_{i}^{k,n})_{1 \leq k \leq 3}$ given by
\begin{align}
   \overline{ \zeta}_{i}^{1,n} &= -\frac{1}{\sqrt{n}}  \frac{\partial_{\sigma} a(X_{i/n}, \sigma_0)}{a(X_{i/n},\sigma_0)} \frac{g_{\alpha_0}}{f_{\alpha_0}}\Bigl(\frac{\sqrt{n}}{a(X_{i/n},\sigma_0)} 
\Delta _i^n \overline{X}, w_n(X_{i/n},\theta_0)\Bigr),   \nonumber \\
  \overline{  \zeta}_{i}^{2,n} &= -\frac{\ln(n)^{\alpha_0/4}}{n^{\alpha_0/4}} \frac{1}{\delta_0} \frac{h_{\alpha_0}}{f_{\alpha_0}}\Bigl(\frac{\sqrt{n}}{a(X_{i/n},\sigma_0)} \Delta _i^n \overline{X}, w_n(X_{i/n},\theta_0)\Bigr),  \label{eq:zetabar} \\
  \overline{  \zeta}_{i}^{3,n} &= \frac{\ln(n)^{\alpha_0/4}}{n^{\alpha_0/4}} \frac{l_{\alpha_0}}{f_{\alpha_0}}\Bigl(\frac{\sqrt{n}}{a(X_{i/n},\sigma_0)} \Delta _i^n \overline{X}, w_n(X_{i/n},\theta_0)\Bigr).  \nonumber
\end{align}
We show that $(u_n(\theta_0)^T \overline{G}_n(\theta_0))_n$ stably converges in law with respect to the $\sigma$-field $\sigma(L^{\alpha_0}_s, s \leq 1)$ to $\overline{I}(\theta_0)^{1/2} \mathcal{N}$ where $\mathcal{N}$ is a standard Gaussian variable independent of $\overline{I}(\theta_0)$ by checking the following convergences (see Theorem IX.7.26 in Jacod and Shiryaev \cite{JacodShiryaev})
\begin{align}
    & \sum_{i=0}^{n - 1}  \E_{i} ( \overline{ \zeta}_{i}^{ n} ) \xrightarrow[]{\PP} 0,  \label{E:TCL1} \\
    & \sum_{i=0}^{n - 1}   \E_{i} (|\overline{ \zeta}_{i}^{k, n}|^{3}) \xrightarrow[]{\PP} 0 ,  \quad k \in \{1,2,3\}, \label{E:TCL2} \\
    & \sum_{i=0}^{n - 1}   \E_{i} (\overline{ \zeta}_{i}^{k, n} \overline{ \zeta}_{i}^{l, n}) \xrightarrow[]{\PP} \overline{I}_{kl}(\theta_0), 
      \quad k,l \in \{1,2,3\}, \label{E:TCL3} \\
    & \sum_{i=0}^{n - 1}  \E_{i}(\overline{\zeta}_{i}^{k, n}  \Delta_i^n M) \xrightarrow[]{\PP} 0,   \quad k \in \{1,2,3\},  \label{E:TCL4}
\end{align}
where the last convergence holds for any square integrable martingale $M$. \\

\noindent
{\bf Proof of \eqref{E:TCL1}, \eqref{E:TCL2} and \eqref{E:TCL3}.}
The first step is to apply  Proposition \ref{P:TV} (ii), to replace the locally stable distribution by the stable distribution, and next we conclude with the same arguments as in the proof of Proposition \ref{Th:TCL}. We check easily from \eqref{E:Bprime} that  the assumptions of Proposition \ref{P:TV} (ii) are satisfied by  the functions  $|\frac{g_{\alpha_0}}{f_{\alpha_0}}( \cdot,w_n(x,\theta_0))|^p |\frac{h_{\alpha_0}}{f_{\alpha_0}}( \cdot,w_n(x,\theta_0))|^q$, 
$|\frac{g_{\alpha_0}}{f_{\alpha_0}}( \cdot,w_n(x,\theta_0))|^p |\frac{l_{\alpha_0}}{f_{\alpha_0}}( \cdot,w_n(x,\theta_0))|^q$
and $|\frac{h_{\alpha_0}}{f_{\alpha_0}}( \cdot,w_n(x,\theta_0))|^p |\frac{l_{\alpha_0}}{f_{\alpha_0}}( \cdot,w_n(x,\theta_0))|^q$, for $p,q \geq 0$, and $x \in \R$. We only give the details for the convergence of $ \sum_{i=0}^{n - 1}  \E_{i} ( \overline{ \zeta}_{i}^{ 2,n} )$  in  \eqref{E:TCL1} and  $\sum_{i=0}^{n - 1}  \E_{i} ( \overline{ \zeta}_{i}^{1, n} )^2$ in \eqref{E:TCL3} as we obtain the other items following exactly the same scheme. For $x \in \R$, we consider the variable $Y_{1/n}$ (defined on another probability space) given  by
\begin{equation} \label{E:Y}
Y_{1/n}= a(x, \sigma_0) n^{-1/2} B_1 + \delta_0 c(x) n^{-1/ \alpha_0} S_1^{\alpha_0},
\end{equation}
where $B_1$ and $S_1^{\alpha_0}$ are independent variables, with respectively the standard Gaussian distribution and the stable distribution \eqref{E:stable}. Next we set
 \begin{equation} \label{E:Zh}
\Xi^{h,n}( x)=\E  \frac{h_{\alpha_0}}{f_{\alpha_0}}\Bigl(B_1+\frac{\delta_0 c(x)}{a(x, \sigma_0)}n^{1/2-1/ \alpha_0} S^{\alpha_0}_{1}, w_n(x,\theta_0)\Bigr)= 
\E  \frac{h_{\alpha_0}}{f_{\alpha_0}}\Bigl(\frac{\sqrt{n}}{a(x, \sigma_0)} Y_{1/n}, w_n(x,\theta_0)\Bigr).
\end{equation}
Then we deduce from Proposition \ref{P:TV} (ii) that   for $\epsilon>0$ arbitrarily small
$$
\sup_{0 \leq i \leq n-1}| \E_i  \frac{h_{\alpha_0}}{f_{\alpha_0}}\Bigl(\frac{\sqrt{n}}{a(X_{i/n},\sigma_0)} \Delta _i^n \overline{X}, w_n(X_{i/n},\theta_0)\Bigr)-\Xi^{h,n}( X_{i/n})| \leq C  (1/n^{1- \epsilon} \lor 1/n^{1/ \alpha_0-\epsilon}),
$$
and   it yields (observing in the case $\alpha>1$  that $\alpha/4+ 1/ \alpha >1$)
$$
| \sum_{i=0}^{n-1} \E_{i} ( \overline{ \zeta}_{i}^{ 2,n} ) + \sum_{i=0}^{n-1}\frac{\ln(n)^{\alpha_0/4}}{n^{\alpha_0/4}}  \frac{1}{\delta_0} \Xi^{h,n}(X_{i/n})| \xrightarrow[]{\PP}  0.
$$
By construction the variable $Y_{1/n}$ admits the density $p_{1/n}( \cdot, \beta(x, \theta_0))$  (given by \eqref{def pi} and \eqref{E:tildep}) and from the definition of the functions $h_{\alpha}, f_{\alpha}$ (see Section \ref{S:Section2})  we have
$$
- \frac{1}{\delta_0}  \frac{h_{\alpha_0}}{f_{\alpha_0}}\Bigl(\frac{\sqrt{n}}{a(x, \sigma_0)} y, w_n(x,\theta_0)\Bigr)= \frac{\partial_{\delta}  p_{1/n}}{p_{1/n}}(y, \beta(x, \theta_0)),
$$
so we deduce from \eqref{E:Zh}  that $\forall x$, $\Xi^{h,n}(x)=0$ (as in \eqref{Eq:point1cvTCL}) and we conclude   $\sum_{i=0}^{n-1} \E_{i} ( \overline{ \zeta}_{i}^{ 2,n} ) \to 0$.
Turning to $ \sum_{i=0}^{n-1} \E_{i} ( \overline{ \zeta}_{i}^{ 1,n} )^2$, we follow the same way and set
\begin{equation} \label{E:Zg}
\Xi^{g,n}( x)
=\E  \frac{g_{\alpha_0}}{f_{\alpha_0}}\Bigl(\frac{\sqrt{n}}{a(x, \sigma_0)} Y_{1/n}, w_n(x,\theta_0)\Bigr)^2.
\end{equation}
From Proposition \ref{P:TV} (ii) we have  for some $\epsilon>0$ arbitrarily small
$$
\sup_{0 \leq i \leq n-1}|\E_i\frac{g_{\alpha_0}}{f_{\alpha_0}}\Bigl(\frac{\sqrt{n}}{a(X_{i/n},\sigma_0)} 
\Delta _i^n \overline{X}, w_n(X_{i/n},\theta_0)\Bigr)-\Xi^{g,n}( X_{i/n})| \leq C (1/n^{1- \epsilon} \lor 1/n^{1/ \alpha_0-\epsilon}),
$$
and   it yields 
$$
| \sum_{i=0}^{n-1} \E_{i} ( \overline{ \zeta}_{i}^{ 1,n} )^2 - \frac{1}{n} \sum_{i=0}^{n-1} \left(\frac{\partial_{\sigma} a(X_{i/n}, \sigma_0)}{a(X_{i/n},\sigma_0)}\right)^2  \Xi^{g,n}(X_{i/n})| \xrightarrow[]{\PP}  0.
$$
It remains to prove that
\begin{equation} \label{E:conv2}
\frac{1}{n} \sum_{i=0}^{n-1} \left(\frac{\partial_{\sigma} a(X_{i/n}, \sigma_0)}{a(X_{i/n},\sigma_0)}\right)^2  \Xi^{g,n}(X_{i/n}) \xrightarrow[]{\PP} 2 \int_0^1 \left(\frac{\partial_{\sigma} a(X_{s}, \sigma_0)}{a(X_{s},\sigma_0)}\right)^2 ds.
\end{equation}
From \eqref{E:Zg} we can show for $x \in \R$
$$
\Xi^{g,n}( x) = \int \frac{g_{\alpha_0}^2}{f_{\alpha_0}}(y, w_n(x,\theta_0))dy, \quad \text{with} \quad \sup_{ |x| \leq K} w_n(x, \theta_0) \leq C n^{1/2-1/ \alpha_0},
$$
and  using \eqref{eq:intsigma} in the proof of Proposition \ref{Th:TCL} combined with $\int \frac{(\mathcal{D}(\phi)(y))^2}{\phi(y)}dy=2$, it yields
$$
\sup_{0 \leq i \leq n-1} | \Xi^{g,n}( X_{i/n}) - 2 | \xrightarrow[]{\PP} 0.
$$
We finally obtain \eqref{E:conv2} with the convergence of the Riemann sum 
$$
\frac{1}{n} \sum_{i=0}^{n-1} \left(\frac{\partial_{\sigma} a(X_{i/n}, \sigma_0)}{a(X_{i/n},\sigma_0)}\right)^2 \xrightarrow[]{\PP} \int_0^1 \left(\frac{\partial_{\sigma} a(X_{s}, \sigma_0)}{a(X_{s},\sigma_0)}\right)^2 ds.
$$
As previously mentioned, all terms in \eqref{E:TCL1}, \eqref{E:TCL2} and \eqref{E:TCL3} can be treated analogously  and we omit the details. \\

\noindent
{\bf Proof of  \eqref{E:TCL4}.} We only consider  $k=1$ and $k=3$, as the case $k=2$ is obtained with similar arguments. According to Theorem 2.2.15 in \cite{JacodProtter}, we also recall that we only need   to consider that   $M$ is either the Brownian Motion $B$ or  $M$ is orthogonal to $B$. 

{\bf Case $k=1$.} We have to prove 
\begin{equation} \label{E:orth1}
\frac{1}{\sqrt{n} }\sum_{i=0}^{n-1} \frac{\partial_{\sigma} a(X_{i/n}, \sigma_0)}{a(X_{i/n},\sigma_0)} \E_i\left(\frac{g_{\alpha_0}}{f_{\alpha_0}}\Bigl(\frac{\sqrt{n}}{a(X_{i/n},\sigma_0)} 
\Delta _i^n \overline{X}, w_n(X_{i/n},\theta_0)\Bigr) \Delta_i^n M \right) \xrightarrow[]{\PP} 0.
\end{equation}
To shorten the notation, we set
\begin{equation} \label{E:Un}
\Upsilon_n(y)= \frac{g_{\alpha_0}}{f_{\alpha_0}}\Bigl(y, w_n(X_{i/n},\theta_0)\Bigr),
\end{equation}
and as observed previously, we have  $\sup_y( |\Upsilon_n(y)|+ |\Upsilon'_n(y)|) \leq C \ln(n)$. We also recall that
$$
\frac{\sqrt{n}}{a(X_{i/n},\sigma_0)} \Delta _i^n \overline{X}= \sqrt{n}  \Delta _i^n B + \sqrt{n} \frac{ \delta_0 c(X_{i/n})}{a(X_{i/n}, \sigma_0)}  \Delta _i^n L^{\alpha_0}.
$$
So it yields for $0<p<\alpha_0$, using  the bound for $\Upsilon_n$ and a first order Taylor expansion for the first inequality and Theorem 2 in \cite{LuschgyPages} for the second one
$$
\E_{i} | \Upsilon_n(\frac{\sqrt{n}}{a(X_{i/n},\sigma_0)} 
\Delta _i^n \overline{X})-  \Upsilon_n(\sqrt{n}  \Delta _i^n B)|^2 \leq C \ln(n)^2 n^{p/2} \E | \Delta _i^n L^{\alpha_0}|^p \leq C \ln(n)^2 n^{p/2-p/ \alpha_0}.
$$
Choosing $p$ close to $\alpha_0$ we deduce for $\epsilon >0$ (small enough to ensure $1- \alpha_0/2- \epsilon>0$)
$$
\E_{i} | \Upsilon_n(\frac{\sqrt{n}}{a(X_{i/n},\sigma_0)} 
\Delta _i^n \overline{X})-  \Upsilon_n(\sqrt{n}  \Delta _i^n B)|^2 \leq C /n^{1- \alpha_0/2- \epsilon}.
$$
Setting
$$
S_n^1=\frac{1}{ \sqrt{n}} \sum_{i=0}^{n - 1} \frac{\partial_{\sigma} a(X_{i/n}, \sigma_0)}{a(X_{i/n},\sigma_0)}  \E_{i} [( \Upsilon_n( \frac{\sqrt{n}}{a(X_{i/n},\sigma_0)} 
\Delta _i^n \overline{X})
- \Upsilon_n( \sqrt{n}  \Delta _i^n B) ) \Delta_{i}^{n}M ],
$$
we deduce  from Cauchy-Schwarz inequality and the previous bound that
\begin{align*}
\E|S_n^{1}| \leq \frac{C }{\sqrt{n}} \left(\sum_{i=0}^{n - 1} \frac{1}{n^{1- \alpha_0/2 - \epsilon}}
 \sum_{i=0}^{n - 1} \E (\Delta_{i}^{n}M)^2 \right)^{1/ 2}
  \to 0,
\end{align*}
observing that the second sum is bounded since $M$ is a square-integrable martingale.  
Thus the proof of \eqref{E:orth1} reduces to show $S_n^2 \to 0$ where
$$
S_n^2=\frac{1}{ \sqrt{n}} \sum_{i=0}^{n - 1}\frac{\partial_{\sigma} a(X_{i/n}, \sigma_0)}{a(X_{i/n},\sigma_0)}  \E_{i} [ \Upsilon_n( \sqrt{n}  \Delta _i^n B)  
 \Delta_{i}^{n}M ].
$$
If $M=B$ and since  $\Upsilon_n$ is even, we check that $\E_{i}  [ \Upsilon_n( \sqrt{n}  \Delta _i^n B)  
 \Delta_{i}^{n}M ]=0$. If $M$ is a square integrable martingale orthogonal to $B$,  we have from the representation Theorem (Theorem III 4.34 in \cite{JacodShiryaev}) 
 $$
 M_t =\int_0^t \int G_s \tilde{N}^{\alpha_0, \tau}(ds,dz) \; \text{with} \; \E \int_0^1 \int G_s^2 \overline{N}^{\alpha_0, \tau}(dz,ds) < \infty,
$$
where $\tilde{N}^{\alpha_0, \tau}$ and $\overline{N}^{\alpha_0, \tau}$ are respectively the compensated Poisson measure and the compensator associated to $(L_t^{\alpha_0})_{t \geq 0}$ and 
 $(G_t)$  is a predictable process.
 We also have the representation with $(h_t^n)$ predictable
 $$
\Upsilon_n( \sqrt{n}  \Delta _i^n B)=\E_i \Upsilon_n( \sqrt{n}  \Delta _i^n B)+\int_{i /n}^{(i+1)/n} h_s^n dB_s.
 $$
We conclude with It\^{o}'s formula 
that $\E_{i} [ \Upsilon_n( \sqrt{n}  \Delta _i^n B) 
 \Delta_{i}^{n}M ]=0$ and the proof of \eqref{E:orth1} is finished.

{\bf Case $k=3$.} We now show 
\begin{equation} \label{E:orth3}
\frac{\ln(n)^{\alpha_0/4}}{n^{\alpha_0/4} }\sum_{i=0}^{n-1} \E_i\left(\frac{l_{\alpha_0}}{f_{\alpha_0}}\Bigl(\frac{\sqrt{n}}{a(X_{i/n},\sigma_0)} 
\Delta _i^n \overline{X}, w_n(X_{i/n},\theta_0)\Bigr) \Delta_i^n M \right) \xrightarrow[]{\PP} 0.
\end{equation}
 We first assume that $M=B$. From H\"{o}lder's inequality with $q>1$, we have
 \begin{align*}
 \E_{i} &  \left| \frac{l_{\alpha_0}}{f_{\alpha_0}}  (\frac{\sqrt{n}}{a(X_{i/n},\sigma_0)} 
\Delta _i^n \overline{X},w_n(X_{i/n},\theta_0) ) 
 \Delta_{i}^{n}B \right|  \leq  \\
&  C\frac{1}{\sqrt{n}}\left(\E_{i}\left| \frac{l_{\alpha_0}}{f_{\alpha_0}}(\frac{\sqrt{n}}{a(X_{i/n},\sigma_0)}
\Delta _i^n \overline{X},w_n(X_{i/n},\theta_0))\right|^q\right)^{1/q}.
  \end{align*}
 Proceeding as in the proof of \eqref{E:TCL1}, we replace $\Delta _i^n \overline{X}$ by the variable $Y_{1/n}$ given by \eqref{E:Y} and we obtain from Proposition \ref{P:TV} (for $\epsilon>0$ close to $0$)
 $$
 \sup_{0 \leq i \leq n-1}
\left|\E_{i}|\frac{l_{\alpha_0}}{f_{\alpha_0}}(\frac{\sqrt{n}}{a(X_{i/n},\sigma_0)} 
\Delta _i^n \overline{X}, w_n(X_{i/n},\theta_0))|^q - \Xi^{l,n}(X_{i/n})\right|^{1/q} \leq C(1/n^{1- \epsilon} \lor 1/n^{1/ \alpha_0-\epsilon})^{1/q},
 $$
 where
 \begin{equation} \label{E:Zl}
 \Xi^{l,n}(x)=\E \left|\frac{l_{\alpha_0}}{f_{\alpha_0}}\Bigl(\frac{\sqrt{n}}{a(x,\sigma_0)} Y_{1/n}, w_n(x,\theta_0)\Bigr)\right|^q.
 \end{equation}
 For $1<q<2$, we see that $(1/n^{1- \epsilon} \lor 1/n^{1/ \alpha_0-\epsilon})^{1/q}\sqrt{n} \ln(n)^{\alpha_0/4}/n^{\alpha_0/4} \to 0$ and the proof of \eqref{E:orth3} reduces to show
 \begin{equation*} 
\sqrt{n}\frac{\ln(n)^{\alpha_0/4}}{n^{\alpha_0/4} } \sup_i |\Xi^{l,n}(X_{i/n})|^{1/q}\xrightarrow[]{\PP} 0.
\end{equation*}
 But since $Y_{1/n}$ admits the density $p_{1/n}( \cdot, \beta(x, \theta_0))$  (given by \eqref{def pi} and \eqref{E:tildep}) and from the definition of the functions $l_{\alpha}, f_{\alpha}$ (see Section \ref{S:Section2})  we have using the bound \eqref{eq:majfghk} in Lemma \ref{l:approxD}
$$
\Xi^{l,n}(x)=\int  \frac{|l_{\alpha_0}|^q}{|f_{\alpha_0}|^{q-1}}(y, w_n(x,\theta_0)dy \leq C \ln(n)^p \int (|k_{\alpha_0}(y)| + |h_{\alpha_0}(y)|)dy \leq C \ln(n)^p /n^{1- \alpha_0/2}.
$$
With $1<q<2$, we have $\alpha_0/4+1/q-\alpha_0/(2q)-1/2>0$, this finishes the proof of \eqref{E:orth3} if $M=B$.
 
 We now assume that $M$ is orthogonal to $B$. As previously,  we apply Theorem III 4.34 in \cite{JacodShiryaev} to write for some predictable process $(G_t)$
 $$
 M_t =\int_0^t \int G_s \tilde{N}^{\alpha_0, \tau}(dz,ds) \; \text{with} \; \E \int_0^1 \int G_s^2 \overline{N}^{\alpha_0, \tau}(dz,ds) < \infty,
 $$
 and also for $(h_t^n)$ and $(H_t^n)$ predictable 
 \begin{align*}
\frac{l_{\alpha_0}}{f_{\alpha_0}}\Bigl(\frac{\sqrt{n}}{a(X_{i/n},\sigma_0)} 
\Delta _i^n \overline{X}, w_n(X_{i/n},\theta_0)\Bigr)= &
\E_i \frac{l_{\alpha_0}}{f_{\alpha_0}}\Bigl(\frac{\sqrt{n}}{a(X_{i/n},\sigma_0)} 
\Delta _i^n \overline{X}, w_n(X_{i/n},\theta_0)\Bigr) \\
& +\int_{i /n}^{(i+1) /n} h_s^n dB_s + \int_{i /n}^{(i+1) /n} \int H_s^n \tilde{N}^{\alpha_0, \tau}(dz,ds).
 \end{align*}
 We then deduce from It\^{o}'s Formula for the product of stochastic integrals
 $$
  \E_i\left(\frac{l_{\alpha_0}}{f_{\alpha_0}}\Bigl(\frac{\sqrt{n}}{a(X_{i/n},\sigma_0)} 
\Delta _i^n \overline{X}, w_n(X_{i/n},\theta_0)\Bigr) \Delta_i^n M \right)= \E_{i}  \int_{i /n}^{(i+1) /n}  \int H_s^n G_s  \overline{N}^{\alpha_0, \tau}(dz,ds).
 $$
 With the decomposition $G_s =G_s{\bf 1}_{| G_s| > \epsilon_n}  +G_s {\bf 1}_{| G_s| \leq \epsilon_n} $, the proof reduces to study  the convergence of
 \begin{align*}
 T^1_n=& \frac{\ln(n)^{\alpha_0/4}}{n^{\alpha_0/4} }\ \sum_{i=0}^{n - 1} \E_{i}  \int_{i /n}^{(i+1) /n}  \int H_s^n G_s {\bf 1}_{| G_s| > \epsilon_n}  \overline{N}^{\alpha_0, \tau}(dz,ds), \\
  T^2_n=& \frac{\ln(n)^{\alpha_0/4}}{n^{\alpha_0/4} }\sum_{i=0}^{n - 1} \E_{i}  \int_{i /n}^{(i+1) /n}  \int H_s^n G_s {\bf 1}_{| G_s| \leq \epsilon_n}  \overline{N}^{\alpha_0, \tau}(dz,ds).
 \end{align*}
From the  bound for $(l_{\alpha_0}/f_{\alpha_0})^{\prime}$ and using that the jumps of $(L_t^{\alpha_0})$ are bounded, we have $\forall t \in [0,1]$, $|H_t^n| \leq C \ln(n)^p$ for some $p>0$. It yields
 \begin{align} \label{eq:Tn1}
\E  |T^1_n| &\leq   C \ln(n)^p \frac{\ln(n)^{\alpha_0/4}}{n^{\alpha_0/4} }\sum_{i=0}^{n - 1} \E  \int_{i /n}^{(i+1) /n}  \int \frac{G^2_s}{\epsilon_n}  \overline{N}^{\alpha_0, \tau}(dz,ds) \nonumber \\
& \leq C \frac{\ln(n)^{\alpha_0/4+p}}{\epsilon_n n^{\alpha_0/4} } \E \int_0^1 \int G_s^2  \overline{N}(dz,ds) \leq  C \frac{\ln(n)^{\alpha_0/4+p}}{\epsilon_n n^{\alpha_0/4} }.
\end{align}
With $\epsilon_n= n^{-\alpha_0/8}$, we  deduce $\E |T_n^1 |\to 0$.
 
Turning to  $T_n^2$,  we first observe that
$$
\E_{i}  \int_{i /n}^{(i+1) /n}  \int (H^n_s)^2  \overline{N}^{\alpha_0, \tau}(dz,ds) \leq      \E_i \frac{l_{\alpha_0}^2}{f_{\alpha_0}^2}\Bigl(\frac{\sqrt{n}}{a(X_{i/n},\sigma_0)} 
\Delta _i^n \overline{X}, w_n(X_{i/n},\theta_0) \Bigr).
$$
As previously we replace $\Delta _i^n \overline{X}$ by $Y_{1/n}$ introducing $\Xi^{l,n}$ defined by \eqref{E:Zl} with $q=2$, and we deduce  from the preceding inequality using successively Proposition \ref{P:TV} and Proposition \ref{L:tout}
$$
\sup_i \E_{i}  \int_{i /n}^{(i+1) /n}  \int (H^n_s)^2  \overline{N}^{\alpha_0, \tau}(dz,ds) \leq 
C [(1/n^{1- \epsilon} \lor 1/n^{1/ \alpha_0-\epsilon})+ 1/(\ln(n)^{\alpha_0/2}n^{1- \alpha_0/2})].
$$
 From Cauchy-Schwarz inequality it yields
 \begin{align*}
|\E_{i} & \int_{i /n}^{(i+1) /n}  \int H_s^n G_s {\bf 1}_{| G_s| \leq \epsilon_n}  \overline{N}^{\alpha_0, \tau}(dz,ds)| \leq 
 \sqrt{C_n}
\left( \E_{i}  \int_{i /n}^{(i+1) /n}  \int G^2_s {\bf 1}_{| G_s| \leq \epsilon_n}  \overline{N}^{\alpha_0, \tau}(dz,ds)\right)^{1/2}, 
\end{align*}
 with $C_n=C [(1/n^{1- \epsilon} \lor 1/n^{1/ \alpha_0-\epsilon})+ 1/(\ln(n)^{\alpha_0/2}n^{1- \alpha_0/2})]$. Applying once again Cauchy-Schwarz inequality
\begin{align*}
\E |T_n^2|  \leq  
  \sqrt{nC_n} \frac{\ln(n)^{\alpha_0/4}}{n^{\alpha_0/4} }
\left( \E \sum_{i=0}^{n - 1} \int_{i /n}^{(i+1)/n}  \int G^2_s {\bf 1}_{| G_s| \leq \epsilon_n}  \overline{N}^{\alpha_0, \tau}(dz,ds) \right)^{1/ 2}\\
 \leq C\left( \E \int_0^1  \int G^2_s {\bf 1}_{| G_s| \leq \epsilon_n}  \overline{N}^{\alpha_0, \tau}(dz,ds)\right)^{1/2}.
\end{align*}
As $\epsilon_n$ goes to zero, we conclude $\E |T_n^2|  \to 0$ by dominated convergence and the proof of \eqref{E:orth3} is finished.

\noindent
2. \underline{Uniform convergence of $u_n^T(\theta_0) \tilde{J}_n(\theta) u_n(\theta_0)$.} 

We recall that $ \tilde{J}_n(\theta)= \nabla_{\theta}  \tilde{G}_n(\theta)$ where
$$
 \tilde{G}_n(\theta)=\sum_{i=0}^{n-1} \nabla_{\theta} \ln p_{1/n} (\Delta_i^n X, \beta(X_{i/n}, \theta)).
$$
Furthermore since $\beta(x, \theta)= (a(x, \sigma), \delta c(x), \alpha)$, we have  for $a,b \in \{\sigma, \delta, \alpha\}$ 
\begin{align*}
&\partial^2_{\sigma \sigma} [ \ln p_{1/n} (y, \beta(x, \theta))]= [\partial^2_{\sigma \sigma } \ln p_{1/n}] (y, \beta(x, \theta)) (\partial_{\sigma} \beta(x, \theta))^2  + [\partial_{ \sigma} \ln p_{1/n}] (y, \beta(x, \theta))  \partial^2_{\sigma \sigma} \beta(x, \theta),  \\
& \partial^2_{a b} [ \ln p_{1/n} (y, \beta(x, \theta))]=[\partial^2_{ab } \ln p_{1/n}] (y, \beta(x, \theta)) \partial_{a} \beta(x, \theta) \partial_{b} \beta(x, \theta) \quad \text{if} \quad (a,b) \neq (\sigma, \sigma).
\end{align*}
So  we can write $u_n(\theta_0)^T \tilde{J_n}(\theta) u_n(\theta_0) = \sum_{i=0}^{n-1} \tilde{\xi}_{i}^n(\theta)$
with
\begin{align*}
    \tilde{\xi}_i^{11,n}(\theta) =& \frac{1}{n} \left( \partial_{\sigma} a(X_{i/n}, \sigma)^2 \partial^2_{\sigma \sigma} \ln p_{1/n} + \partial^2_{\sigma \sigma} a(X_{i/n}, \sigma) \partial_{ \sigma} \ln p_{1/n} \right) (\Delta _i^n X, \beta(X_{i/n},\theta)), \\
    \tilde{\xi}_i^{12,n} (\theta)=& \frac{\ln(n)^{\alpha_0/4}}{\sqrt{n}n^{\alpha_0/4}}\partial_{\sigma} a(X_{i/n}, \sigma) c(X_{i/n}) \partial^2_{\sigma \delta}\ln p_{1/n}(\Delta _i^n X, \beta(X_{i/n},\theta)),\\
    \tilde{\xi}_i^{13,n}(\theta) =& \frac{\ln(n)^{\alpha_0/4}}{\sqrt{n}n^{\alpha_0/4}} \partial_{\sigma} a(X_{i/n}, \sigma)\left(-\frac{\delta_0 c(X_{i/n})}{2 \alpha_0}\bigl[\ln(n) 
   - \ln(\ln n)\bigr]\partial^2_{\sigma \delta} \ln p_{1/n}  \right. \\
 \quad & \quad  \left.  + \partial^2_{\sigma \alpha}
    \ln  p_{1/n} \right)(\Delta _i^n X, \beta(X_{i/n},\theta)), \\
    \tilde{\xi}_i^{22,n} (\theta)=& \frac{\ln(n)^{\alpha_0/2}}{n^{\alpha_0/2}} c(X_{i/n})^2\partial^2_{\delta \delta} \ln p_{1/n} (\Delta _i^n X, \beta(X_{i/n},\theta)), \\
    \tilde{\xi}_i^{23,n}(\theta) = & \frac{\ln(n)^{\alpha_0/2}}{n^{\alpha_0/2}} c(X_{i/n})\left(-\frac{\delta_0 c(X_{i/n})}{2 \alpha_0}\bigl[\ln(n) - \ln(\ln n)\bigr]\partial^2_{\delta \delta} \ln p_{1/n} \right. \\
    \quad & \quad  \left. + 
    \partial^2_{\delta \alpha}  \ln p_{1/n} \right)(\Delta _i^n X, \beta(X_{i/n},\theta)),\\
  \tilde{\xi}_i^{33,n} (\theta)= & \frac{\ln(n)^{\alpha_0/2}}{n^{\alpha_0/2}} \left( \frac{\delta_0^2 c(X_{i/n})^2}{(2\alpha_0)^2}\bigl[\ln(n) - \ln(\ln n)\bigr]^2 \partial^2_{\delta \delta} \ln p_{1/n} \right. \\
    \quad &\left. - \frac{\delta_0 c(X_{i/n})}{\alpha_0}\bigl[\ln(n) - \ln(\ln n)\bigr]\partial^2_{\alpha \delta} \ln  p_{1/n} + \partial^2_{\alpha \alpha} \ln p_{1/n}  \right)
    (\Delta _i^n X, \beta(X_{i/n},\theta)).
    \end{align*}
To prove the result, it is sufficient to check
    \begin{equation} \label{E:lln}
    u_n(\theta_0)^T \tilde{J}_n(\theta_0) u_n(\theta_0) \xrightarrow[]{\PP} -\overline{I}(\theta_0),
\end{equation}
and $\forall r>0$
    \begin{equation} \label{E:ulln} 
    \sup_{\theta \in V_n^{(r)} } ||u_n(\theta_0)^T (\tilde{J}_n(\theta) - J_n(\theta_0)) u_n(\theta_0)|| \xrightarrow[]{\PP} 0. 
\end{equation}

\noindent
{\bf Proof of \eqref{E:lln}.}
We just give the sketch of the proof. As previously, the first step is to replace the increments $\Delta_i^n X$ by the Euler approximation without drift $\Delta_i^n \overline{X}$.
We set $ \overline{J}_n(\theta)= \nabla_{\theta}  \overline{G}_n(\theta)$ with
$$
 \overline{G}_n(\theta)=\sum_{i=0}^{n-1} \nabla_{\theta} \ln p_{1/n} (\Delta_i^n \overline{X}, \beta(X_{i/n}, \theta)).
$$
So we have $u_n(\theta_0)^T \overline{J}_n(\theta_0) u_n(\theta_0) = \sum_{i=0}^{n-1} \overline{\xi}_{i}^n(\theta_0)$, where the variables $(\overline{\xi}_{i}^n(\theta))$ are similar to the variables $(\tilde{\xi}_i^n(\theta))$ replacing 
$\Delta_i^n X$ by $\Delta_i^n \overline{X}$. As explained in Section \ref{S:Section2}, 
 $\forall a,b \in \{\sigma, \delta, \alpha\}$, we can write   $\partial^2_{ab} \ln p_{1/n}( y,\beta(x,\theta)) $  as a finite sum of terms of the form
 $$
 c_{k,l,m}(x, \theta) \ln(n)^p \frac{f_{\alpha}^{(k,l,m)}}{f_{\alpha}}(\frac{\sqrt{n}}{a(x, \sigma)} y, w_n(x, \theta)), \quad k,l,m,p \geq 0,
 $$
 where the functions $f^{(k,l,m)}$ are defined by \eqref{def:fklm} and the coefficients $c_{k,l,m}(x, \theta)$ are bounded for $|x| \leq K$ and $\theta$ in a compact subset of $\R\times (0, + \infty) \times (0,2)$. Some bounds for $f^{(k,l,m)}$  are given in Lemma \ref{L:MajFunc}. With similar arguments that we not detail here,  we can extend these results to $ (f^{(k,l,m)})^{\prime}$ and $ (f^{(k,l,m)})^{\prime \prime}$ to see that
   the assumptions of Proposition \ref{P:euler} are satisfied by the functions $y \mapsto \partial^2_{ab} \ln p_{1/n}( y,\beta(x,\theta)) $. Then from Proposition \ref{P:euler}, it is easy to verify the conditions
 \eqref{E:i} to obtain
$$
u_n(\theta_0)^T  \tilde{J}_n(\theta_0)u_n( \theta_0)-u_n(\theta_0)^T  \overline{J}_n(\theta_0) u_n(\theta_0) \xrightarrow[]{\PP} 0.
$$ 
Thus the proof will be finished if we show 
 \begin{equation} \label{E:llneuler}
    u_n(\theta_0)^T \overline{J}_n(\theta_0) u_n(\theta_0) = \sum_{i=0}^{n-1} \overline{\xi}_i^n(\theta_0) \xrightarrow[]{\PP} -\overline{I}(\theta_0).
\end{equation}
This is done by checking once again the two conditions  \eqref{E:i}. From Lemma \ref{L:der2der3}, we have for $a,b \in \{\sigma, \delta, \alpha\}$, 
$$
\forall y, \; \forall |x| \leq K, \; | \partial^2_{ab} \ln p_{1/n}( y,\beta(x,\theta_0))| + | \partial^2_{ab} \ln p_{1/n}( y,\beta(x,\theta_0))|^2 \leq C \ln(n)^p (1 + \ln(1 + |y|)^q), 
$$
for some $p,q>0$. Then it yields from Proposition \ref{P:TV} (with $Y_{1/n}$ defined by \eqref{E:Y}) for $|x| \leq K$
\begin{align}
& |\E_i \partial^2_{ab} \ln p_{1/n}( \Delta_i^n\overline{X},\beta(x,\theta_0)) -\E \partial^2_{ab} \ln p_{1/n}(Y_{1/n},\beta(x,\theta_0))| \leq C (1/n^{1- \epsilon} \lor 1/n^{1/ \alpha_0-\epsilon}), \label{E:Z1}\\
& |\E_i |\partial^2_{ab} \ln p_{1/n}( \Delta_i^n\overline{X},\beta(x,\theta_0))|^2 -\E |\partial^2_{ab} \ln p_{1/n}(Y_{1/n},\beta(x,\theta_0))|^2| \leq C (1/n^{1- \epsilon} \lor 1/n^{1/ \alpha_0-\epsilon}).  \label{E:Z2}
\end{align}
Setting $\Xi^{a,b,n}(x)= \E \partial^2_{ab} \ln p_{1/n}(Y_{1/n},\beta(x,\theta_0))$, we deduce from \eqref{E:Z1} that we can replace, in the expression of $\E_i \overline{\xi}_i^n(\theta_0)$, the conditional expectation 
$\E_i \partial^2_{ab} \ln p_{1/n}(\Delta_i^n \overline{X},\beta(X_{i/n},\theta_0))$ by $\Xi^{a,b,n}(X_{i/n})$. More precisely we have  the convergence (that we only write for $\overline{\xi}_i^{22,n}(\theta_0)$ as similar results hold for the other terms) 
$$
|\sum_{i=0}^{n-1} \E_i \overline{\xi}_i^{22,n}(\theta_0) -\frac{\ln(n)^{\alpha_0/2}}{n^{\alpha_0/2}} \sum_{i=0}^{n-1}c(X_{i/n})^2 \Xi^{\delta, \delta,n}(X_{i/n})| \xrightarrow[]{\PP} 0.
$$ 
Proceeding as in the proof of Proposition \ref{Th:CvIn} \eqref{E:CvIn} (proof of \eqref{eq:Espi})
 it yields
$$
\frac{\ln(n)^{\alpha_0/2}}{n^{\alpha_0/2}} \sum_{i=0}^{n-1}c(X_{i/n})^2 \Xi^{\delta, \delta,n}(X_{i/n}) \xrightarrow[]{\PP} -\overline{I}_{22}(\theta_0).
$$
In a similar way, we obtain $\sum_{i=0}^{n-1} \E_i \overline{\xi}_i^n(\theta_0)^2 \rightarrow 0$  using \eqref{E:Z2} to replace $\Delta_i^n \overline{X}$ by $Y_{1/n}$ in the expression of 
$ \E_i \overline{\xi}_i^n(\theta_0)^2$ and next concluding as in the proof of Proposition \ref{Th:CvIn} (proof of \eqref{eq:Espi2}). This achieves the proof of  \eqref{E:llneuler}.

\noindent
{\bf Proof of \eqref{E:ulln}.} 
With
 $V_n^{(r)}$defined by \eqref{def:V} we recall that we have $\forall \theta \in V_n^{(r)},$
\begin{align*}
    |\sigma - \sigma_0|\leq \frac{C}{\sqrt{n}}, \quad 
    |\delta - \delta_0|\leq \frac{C (\ln(n))^p}{n^{\alpha_0/4}}, \quad
    |\alpha - \alpha_0|\leq \frac{C (\ln(n))^p}{n^{\alpha_0/4}}.
\end{align*}
Considering the variables $(\tilde{\xi}_i^n(\theta))$ introduced previously, we have to prove for $1 \leq k,l \leq 3$
\begin{equation} \label{E:Ukl}
  \sup_{\theta \in V_n^{(r)} } | \sum_{i=0}^{n-1}( \tilde{\xi}_i^{kl,n}(\theta)-  \tilde{\xi}_i^{kl,n}(\theta_0)) | \xrightarrow[]{\PP} 0.  
\end{equation}
As in the proof of Proposition \ref{Th:CvIn} \eqref{E:CvUnif},  we bound $| \tilde{\xi}_i^{k,l,n}(\theta)-  \tilde{\xi}_i^{kl,n}(\theta_0)|$ using a first order expansion and Lemma \ref{L:der2der3} \eqref{eq:boundder3}. It yields for some $p>0$ (using that $1/\sqrt{n} \leq 1/n^{\alpha_0/4}$)
$$
 \sup_{\theta \in V_n^{(r)} } | \tilde{\xi}_i^{11,n}(\theta)-  \tilde{\xi}_i^{11,n}(\theta_0)| \leq \frac{C}{n}  \frac{\ln(n)^p}{n^{\alpha_0/4} }(1+ \ln (1+ |\Delta_i^n X|)^p),
$$
and taking the expectation we obtain \eqref{E:Ukl} for $(k,l)=(1,1)$. For $(k,l) \neq (1,1)$, we still have the bound \eqref{E:Bw}  and \eqref{E:Bphi} (replacing $\sigma$ by $a(x, \sigma)$ and $\delta$ by $\delta c(x)$) and we obtain $\forall \varepsilon, \eta>0$, $\exists C>0$, $\exists p>0$ such that for $n$ large enough
\begin{equation} \label{E:Bgn}
 \sup_{\theta \in V_n^{(r)} } | \tilde{\xi}_i^{kl,n}(\theta)-  \tilde{\xi}_i^{kl,n}(\theta_0)|  \leq  
  \frac{C}{n^{\alpha_0/2}}\frac{\ln(n)^p}{n^{\alpha_0/4}} g_n(\frac{\sqrt{n}}{a(X_{i/n}, \sigma_0)}\Delta _i^n X, X_{i/n})
\end{equation}
with
$$  
g_n(y,x)=  \frac{w_n(x,\theta_0)^{\alpha_0}\psi_{\alpha_0-\varepsilon}^{(3)} (y)}{\phi((1+\eta) y)+w_n(x,\theta_0)^{\alpha_0}\psi_{\alpha_0+\varepsilon}^{(0)} ( y)},
$$
where the functions $\psi_{\alpha}^{(p)}$ are defined in \eqref{eq:fp}. We check that for all $|x| \leq K$, $y \mapsto g_n(y,x)$ is $\mathcal{C}^2$, even, and satisfies for some $p>0$, $\forall y, \; \forall |x| \leq K, \;$
$$
 0 \leq g_n(y,x) \leq C(1+ \ln(1+ |y|)^3)(1+ |y|^{2 \varepsilon}), \quad |g^{\prime}_n(y,x)| + |g^{\prime \prime}_n(y,x)| \leq C \ln (n)^p.
$$
The bound for $g_n$ is immediate. The bounds for the first and second order derivatives require a little more work. We just explain the main arguments to bound $g^{\prime}_n$. An explicit calculus gives
$
g^{\prime}_n(y,x)= D_n^1(y,x) - D_n^2(y,x)
$
with
$$
 D_n^1(y,x) =\frac{w_n(x,\theta_0)^{\alpha_0}\psi_{\alpha_0-\varepsilon}^{(3) \prime} (y)}{\phi((1+\eta) y)+w_n(x,\theta_0)^{\alpha_0}\psi_{\alpha_0+\varepsilon}^{(0)} ( y)} ,
 $$
 $$
D_n^2(y,x)  =g_n(y,x) \frac{[(1+ \eta)\phi^{\prime}((1+\eta) y)+w_n(x,\theta_0)^{\alpha_0}\psi_{\alpha_0+\varepsilon}^{(0) \prime} ( y) ] }
 {\phi((1+\eta) y)+w_n(x,\theta_0)^{\alpha_0}\psi_{\alpha_0+\varepsilon}^{(0) } ( y)}.
$$
We check easily that $\sup_{|x| \leq K} |D_n^1(y,x) | \leq C$. For the second term, using that
for $y>0$ large enough $y \mapsto y\phi((1+\eta) y)/ \psi_{\alpha_0+1+\varepsilon}^{(0) } ( y) ) $ is decreasing, we introduce  
 $y^*_n(x)$ the unique solution of $y\phi((1+\eta) y)=w_n(x,\theta_0)^{\alpha_0}\psi_{\alpha_0+1+\varepsilon}^{(0) } ( y)$ (that exists since $w_n(x,\theta_0)^{\alpha_0} \rightarrow 0$). We can check that $y^*_n(x)\underset{n\to + \infty}{\sim} C \sqrt{\ln n}$ (where $C$ does not depend on $x$). By definition of $y^*_n(x)$, we have
 $$
 |y|\phi((1+\eta) y) \leq w_n(x,\theta_0)^{\alpha_0}\psi_{\alpha_0+1+\varepsilon}^{(0) } ( y) \quad \text{if} \quad |y| \geq y^*_n(x).
 $$
Consequently we obtain 
$$
|D_n^2(y,x) | \leq \left\{
 \begin{array}{l} 
C g_n(y,x) (|y| +1)  \quad \text{if} \quad |y| \leq  y^*_n(x), \\
C g_n(y,x) / |y| \quad \text{if} \quad |y| \geq y^*_n(x),
\end{array} \right.
$$
and  finally, using the bound for $g_n$, it yields $\forall y$, $\sup_{|x| \leq K} |D_n^2(y,x) | \leq C \ln(n)^p$ for $p>0$.

Coming back to the proof of \eqref{E:Ukl}, we have from \eqref{E:Bgn}
$$
\E \sup_{\theta \in V_n^{(r)} } | \sum_{i=0}^{n-1}( \tilde{\xi}_i^{kl,n}(\theta)-  \tilde{\xi}_i^{kl,n}(\theta_0)) | \leq  \frac{C}{n^{\alpha_0/2}}\frac{\ln(n)^p}{n^{\alpha_0/4}} \E \sum_{i=0}^{n-1} \E_i g_n(\frac{\sqrt{n}}{a(X_{i/n}, \sigma_0)}\Delta _i^n X, X_{i/n}).
 $$
From Proposition \ref{P:euler},  we deduce that
$$
\frac{1}{n^{\alpha_0/2}}\frac{\ln(n)^p}{n^{\alpha_0/4}} \E \sum_{i=0}^{n-1}| \E_i g_n(\frac{\sqrt{n}}{a(X_{i/n}, \sigma_0)}\Delta _i^n X, X_{i/n})-
\E_i g_n(\frac{\sqrt{n}}{a(X_{i/n}, \sigma_0)}\Delta _i^n \overline{X}, X_{i/n})| \xrightarrow[]{} 0,
$$
and from Proposition \ref{P:TV},
$$
\frac{1}{n^{\alpha_0/2}}\frac{\ln(n)^p}{n^{\alpha_0/4}} \E \sum_{i=0}^{n-1}| \E_i g_n(\frac{\sqrt{n}}{a(X_{i/n}, \sigma_0)}\Delta _i^n \overline{X}, X_{i/n})
-\Xi^n(X_{i/n})| \xrightarrow[]{} 0,
$$
where $\Xi^n(x)= \E g_n(\frac{\sqrt{n}}{a(x, \sigma_0)}Y_{1/n}, x)$. But as done at the end of the proof of \eqref{E:CvUnif}, we can show with $\varepsilon$ and $\eta$ small enough
$$
n \frac{1}{n^{\alpha_0/2}}\frac{\ln(n)^p}{n^{\alpha_0/4}} \sup_{|x| \leq K} \Xi^n(x) \xrightarrow[]{} 0,
$$
and the proof of \eqref{E:Ukl} is finished.
\subsubsection{Proof of Proposition \ref{P:TV}}

(i) We first observe that $n^{1/\alpha} L_{1/n}^{\alpha}$ has the distribution of the variable $L^{n, \alpha}_1$ with characteristic function
$$
e^{iu L^{n, \alpha}_1}=e^{ \int(e^{iuz}-1-z1_{\{ |z| \leq 1\} }) \frac{c_{\alpha} \tau(z/n^{1/\alpha})}{|z|^{\alpha+1}} dz}.
$$
We can decompose $L_1^{n, \alpha}$ and $S_1^{\alpha}$ as a sum of three independent variables
\begin{equation} \label{E:Ldec}
L^{n, \alpha}_1=L^{n, \alpha, s}_1+L^{n, \alpha, l}_1+L^{n, \alpha, xl}_1, \quad S^{n, \alpha}_1=S^{ \alpha, s}_1+S^{ \alpha, l}_1+S^{ \alpha, xl}_1
\end{equation}
with
$$
e^{iu L^{n, \alpha,s}_1}=e^{ \int_{\{ |z| \leq 1\}}(e^{iuz}-1-z1_{\{ |z| \leq 1\} }) \frac{c_{\alpha} \tau(z/n^{1/\alpha})}{|z|^{\alpha+1}} dz}, \quad 
e^{iu L^{n, \alpha,l}_1}=e^{ \int_{\{1< |z| \leq \eta n^{1/ \alpha}\}}(e^{iuz}-1) \frac{c_{\alpha} \tau(z/\eta n^{1/\alpha})}{|z|^{\alpha+1}} dz}, 
$$
$$
e^{iu L^{n, \alpha,xl}_1}=e^{ \int_{\{|z|>\eta n^{1/ \alpha} \}}(e^{iuz}-1) \frac{c_{\alpha} \tau(z/n^{1/\alpha})}{|z|^{\alpha+1}} dz},
$$
and where $S^{ \alpha, s}_1$, $S^{ \alpha, l}_1$, $S^{ \alpha, xl}_1$ are defined similarly with $\tau=1$. From the sub-additivity property of the total variation distance we deduce that
$$
d_{TV}( n^{1/\alpha} L_{1/n}^{\alpha}, S_1^{\alpha}) \leq d_{TV}(  L^{n,\alpha,s}_1, S_1^{\alpha, s}) +d_{TV}(  L^{n,\alpha,l}_1, S_1^{\alpha, l})+d_{TV}(  L^{n,\alpha,xl}_1, S_1^{\alpha, xl}).
$$
\noindent
\underline{Small jumps.}
We first bound $ d_{TV}(  L^{n,\alpha,s}_1, S_1^{\alpha, s}) $. Using Proposition 2.8 in Liese \cite{Liese}, we have
$$
d_{TV}(  L^{n,\alpha,s}_1, S_1^{\alpha, s}) \leq 2 \left(1-e^{-H^2} \right)^{1/2},
$$
where $H$ is the Hellinger distance between the L\'evy measures of $ L^{n,\alpha,s}_1$ and $S_1^{\alpha, s}$ defined by
$$
H^2= \int_{\{ |z| \leq 1\}} \frac{c_{\alpha}}{|z|^{\alpha+1}} \left( \sqrt{ \tau(z/n^{1/ \alpha}})-1\right)^2 dz.
$$
Using {\bf H2} we have
$$
H^2 \leq C  \int_{\{ |z| \leq 1\}} \frac{z^2}{n^{2/ \alpha} |z|^{\alpha+1}}dz  \leq C/n^{2/ \alpha},
$$
and we deduce
\begin{equation} \label{E:TV1}
d_{TV}(  L^{n,\alpha,s}_1, S_1^{\alpha, s}) \leq C/n^{1/ \alpha}.
\end{equation}
\noindent
\underline{Large bounded jumps.}
We next turn to $d_{TV}(  L^{n,\alpha,l}_1, S_1^{\alpha, l})$. By construction $ L^{n,\alpha,l}_1$ (respectivey  $S^{\alpha,l}_1$) has a compound Poisson distribution
with intensity $\lambda_{n,L}$ (respectively $\lambda_{n,S}$) and jump size distribution $\mu_{n,L}$ (respectively $\mu_{n,S}$) with
$$
\lambda_{n,L}= 2 c_{\alpha} \int_1^{\eta n^{1/ \alpha}} \frac{ \tau(z/n^{1/ \alpha})}{z^{\alpha+1} }dz, \quad \lambda_{n,S}= 2 c_{\alpha} \int_1^{\eta n^{1/ \alpha}} \frac{ 1}{z^{\alpha+1} }dz,
$$
$$
\mu_{n,L}(dz)= \frac{c_{\alpha} }{\lambda_{n,L}}\frac{ \tau(z/n^{1/ \alpha})}{|z|^{\alpha+1} }1_{\{ 1 \leq |z| \leq \eta n^{1/ \alpha}\} }dz, \quad \mu_{n,S}(dz)=  \frac{c_{\alpha} }{\lambda_{n,S}}\frac{ 1}{|z|^{\alpha+1} }1_{\{ 1 \leq |z| \leq \eta n^{1/ \alpha}\} }dz.
$$
From Theorem 13 in Mariucci and Rei{\ss}  \cite{ReissM}, we have
\begin{equation} \label{E:MR}
d_{TV}(  L^{n,\alpha,l}_1, S_1^{\alpha, l}) \leq (\lambda_{n,L} \wedge \lambda_{n,S}) d_{TV} (\mu_{n,L}, \mu_{n,S}) +1-e^{ |\lambda_{n,L}-\lambda_{n,S}|}.
\end{equation}
Using {\bf H2}, we can write
\begin{equation*}
\lambda_{n,L}=\lambda_{n,S} + R_n, \; \text{with} \;  |R_n| \leq \frac{C}{n^{1/ \alpha}} \int_1^{\eta n^{1/ \alpha}} \frac{ z}{z^{\alpha+1} }dz,
\end{equation*}
and we check easily 
$$
 |\lambda_{n,L}-\lambda_{n,S}| \leq \left\{
 \begin{array}{l}
 C/n \; \text{if} \; \alpha \neq 1, \\
 C \ln(n)/ n \; \text{if} \; \alpha =1.
 \end{array}
 \right.
$$
Moreover
\begin{align*}
d_{TV} (\mu_{n,L}, \mu_{n,S}) & = \frac{ C}{  \lambda_{n,S}\lambda_{n,L}}  \int_1^{\eta n^{1/ \alpha}} \frac{ 1}{z^{\alpha+1}}|\lambda_{n,L} - \lambda_{n,S} \tau(z/ n^{1/ \alpha})  |dz, \\
\end{align*}
and we deduce
$$
d_{TV} (\mu_{n,L}, \mu_{n,S})\leq \left\{
 \begin{array}{l}
 C/n \; \text{if} \; \alpha \neq 1, \\
 C \ln(n)/ n \; \text{if} \; \alpha =1.
 \end{array}
 \right.
$$
Plugging these results in \eqref{E:MR}, it yields
\begin{equation} \label{E:TV2}
d_{TV} (L^{n,\alpha,l}_1, S_1^{\alpha, l})\leq \left\{
 \begin{array}{l}
 C/n \; \text{if} \; \alpha \neq 1, \\
 C \ln(n)/ n \; \text{if} \; \alpha =1.
 \end{array}
 \right.
\end{equation}
\noindent
\underline{Large unbounded jumps.}
We finally bound $d_{TV}(  L^{n,\alpha,xl}_1, S_1^{\alpha, xl})$.  As previously, we observe that  $ L^{n,\alpha,xl}_1$ has a compound Poisson distribution. We denote by $N^{n,L}$ the number of jumps of $ L^{n,\alpha,xl}_1$ and by $N^{n,S}$ the number of jumps of $ S^{\alpha,xl}_1$.  We check easily that
$$
\PP (N^{n,L} \geq 1) \leq C/n, \quad \PP (N^{n,S} \geq 1) \leq C/n.
$$
Now for any bounded function $g$ we have
$$
\E g( L^{n,\alpha,xl}_1)=g(0)+ \E g(L^{n,\alpha,xl}_1)1_{ \{ N^{n,L} \geq1\}},
$$ 
and similarly
$$
\E g( S^{\alpha,xl}_1)=g(0)+ \E g(S^{\alpha,xl}_1)1_{ \{ N^{n,S} \geq1\}}.
$$
Consequently
$$
| \E g( L^{n,\alpha,xl}_1)-\E g( S^{\alpha,xl}_1)| \leq ||g||_{\infty} (\PP (N^{n,L} \geq 1)+ \PP (N^{n,S} \geq 1)) \leq C/n,
$$
and we deduce  
\begin{equation} \label{E:TV3}
d_{TV}(  L^{n,\alpha,xl}_1, S_1^{\alpha, xl})\leq C/n.
\end{equation}
Collecting \eqref{E:TV1} , \eqref{E:TV2} and \eqref{E:TV3} we obtain (i).

(ii) We recall that $\phi$ is the density of the standard Gaussian variable, $\varphi_{\alpha}$ is the density of $S_1^{\alpha}$, and  we denote by $\varphi_{n, \alpha}$ the density of $n^{1/\alpha} L_{1/n}^{\alpha}$ (we refer to Picard \cite{Picard} for the existence). We have
$$
\E g_n(  B_1+ c_n n^{1/\alpha} L_{1/n}^{\alpha})-\E g_n(  B_1+ c_n S_1^{\alpha})= \int \int   g_n(x+c_ny)\phi(x) (\varphi_{n, \alpha}(y)-\varphi_{ \alpha}(y) )dxdy. 
$$
By assumption 
$$
|g_n(x+c_ny)| \leq C  \ln(n)^p(1+ |x|^{\varepsilon \alpha} + | y|^{\varepsilon \alpha} ),
$$
and it yields
\begin{align} \label{E:TVg}
& |\E g_n(  B_1+ c_n n^{1/\alpha} L_{1/n}^{\alpha})-  \E g_n(  B_1+ c_n S_1^{\alpha})|  \leq  C \ln(n)^p  [ d_{TV}( n^{1/\alpha} L_{1/n}^{\alpha}, S_1^{\alpha})  \nonumber\\
& \quad \quad  \quad \quad + \int  | y|^{\varepsilon \alpha} |\varphi_{n, \alpha}(y)-\varphi_{ \alpha}(y) | dy ],
\end{align}
where we have used that $ d_{TV}( n^{1/\alpha} L_{1/n}^{\alpha}, S_1^{\alpha})=\frac{1}{2} \int | \varphi_{n, \alpha}(y)-\varphi_{ \alpha}(y) | dy$.

It remains to bound the second term in the right-hand side of \eqref{E:TVg}. We split the integral in two parts.
We have immediately
\begin{equation} \label{E:TVg1}
\int_{|y| \leq K_n} |y|^{\varepsilon \alpha} | \varphi_{n, \alpha}(y)-\varphi_{ \alpha}(y) | dy \leq C  K_n^{\varepsilon \alpha} d_{TV}( n^{1/\alpha} L_{1/n}^{\alpha}, S_1^{\alpha}).
\end{equation}
and  for $0<q'<\alpha(1-\varepsilon)$ (using Theorem 2 in \cite{LuschgyPages} for the last inequality)
\begin{eqnarray} \label{E:TVg2}
\int_{|y| > K_n} | y|^{\varepsilon \alpha} | \varphi_{n, \alpha}(y)-\varphi_{ \alpha}(y) | dy  \leq C(\E | n^{1/\alpha} L_{1/n}^{\alpha}|^{q'+\varepsilon \alpha}+ \E |S_1^{\alpha} |^{q'+\varepsilon \alpha} )/K_n^{q'} 
  \leq C/K_n^{q'}. 
\end{eqnarray}
We finish the proof of (ii) from  (i) and \eqref{E:TVg}, by choosing $K_n=n^{1/q'}$ with $\alpha/2<q'< \alpha(1-\varepsilon)$ (that gives $\varepsilon< \varepsilon \alpha/q'< 2 \varepsilon$) in \eqref{E:TVg1} and \eqref{E:TVg2}.

\subsubsection{Proof of Proposition \ref{P:euler}}

To simplify the notation,  we omit the parameter $\sigma_0$ in the expression of the diffusion coefficient $a$ and  write $\E_i$ instead of $\E_i^{\theta_0}$.
From the expressions of $\Delta^{n}_{i} X$ and $\Delta^{n}_{i} \overline{X}$ (see \eqref{E:euler}), we deduce the decomposition $\sqrt{n}\Delta^{n}_{i} X   =  \sqrt{n} \Delta^{n}_{i} \overline{X}+ \theta(1)_i^n + \theta(2)_i^n + \theta(3)_i^n$ with
\begin{align*}
    \theta(1)_i^n & = \sqrt{n} \int_{i/n}^{(i+1)/n}   (a(X_t)-a(X_{i/n}))dB_t+ \frac{1}{\sqrt{n}} b(X_{i/n}),\\
    \theta(2)_i^n & = \delta_0 \sqrt{n}   \int_{i/n}^{(i+1)/n}  (c(X_{t-})-c(X_{i/n}))dL^{\alpha_0}_t,\\
    \theta(3)_i^n & =\sqrt{n}\int_{i/n}^{(i+1)/n}  (b(X_t )- b(X_{i/n}))dt.
\end{align*}
We check easily from Burkholder's inequality and the assumptions on the coefficients $a,b,c$ of the stochastic equation and on the support of the L\'evy measure that 
\begin{equation*}
 \E_i (\sup_{i/n \leq t \leq (i+1)/n}|X_t-X_{i/n}|^2) \leq C/n.
\end{equation*}
Then using the isometry property for the stochastic integrals, we have
\begin{equation} \label{E:intsquare}
\E_i(  |\theta(1)_i^n|^2+ |\theta(2)_i^n|^2+ |\theta(3)_i^n|^2) \leq C/n,
\end{equation}
and we deduce immediately \eqref{E:square}.

We now prove \eqref{E:weak}. With the previous decomposition, we can write
\begin{align*}
    f_n( \sqrt{n}\Delta^{n}_{i} X) - f_n(\sqrt{n} \Delta^{n}_{i} \overline{X} )  = &
     f_n(\sqrt{n}\Delta^{n}_{i} X ) - f_n(\sqrt{n} \Delta^{n}_{i} \overline{X} + \theta(1)_i^n) \\
  &  +  f_n( \sqrt{n} \Delta^{n}_{i} \overline{X} + \theta(1)_i^n) - f_n( \sqrt{n} \Delta^{n}_{i} \overline{X} ),
\end{align*}
and it yields (using the bound for $f'_n$ for the first term in the right-hand side of the previous equality)
\begin{eqnarray} \label{E:weak1}
| \E_i  (f_n( \sqrt{n}\Delta^{n}_{i} X) - f_n(\sqrt{n} \Delta^{n}_{i} \overline{X} )) | \leq  C \ln(n)^p  \E_i  (|\theta(2)_i^n| + |\theta(3)_i^n|) \nonumber \\
+ |\E_i  (f_n( \sqrt{n} \Delta^{n}_{i} \overline{X} + \theta(1)_i^n) - f_n( \sqrt{n} \Delta^{n}_{i} \overline{X} )) |.
\end{eqnarray}
We see easily that
\begin{equation} \label{E:L1-3}
\E_i  |\theta(3)_i^n| \leq C/n,
\end{equation}
and  for $\epsilon >0$
\begin{eqnarray} \label{E:L1-2}
\E_i  |\theta(2)_i^n| \leq \left\{
\begin{array}{l}
C_{\epsilon}/n^{1/ \alpha_0 - \epsilon}, \; \text{if} \; \alpha_0 >1,  \\
C_{\epsilon}/n^{1- \epsilon}, \; \text{if} \; \alpha_0 \leq 1.
\end{array} \right.
\end{eqnarray}
Let us give some details for \eqref{E:L1-2}. We can represent $L_t^{\alpha_0}$ as
$$
L_t^{\alpha_0}= \int_0^t \int_{0< |z| \leq K} z \tilde{N}^{\alpha_0, \tau}(dz,ds),
$$
where $\tilde{N}^{\alpha_0, \tau}$ is a compensated Poisson random measure with compensator $\overline{N}^{\alpha_0, \tau}(dz,ds)= F^{\alpha_0, \tau}(dz)  ds$. Then using successively H\"older's inequality with $q \in (1\lor \alpha_0,2]$ and Burkholder's inequality (Lemma 2.1.5 in \cite{JacodProtter}) we obtain
\begin{eqnarray*}
\E_i  |\int_{i/n}^{(i+1)/n}  (c(X_{t-})-c(X_{i/n}))dL^{\alpha_0}_t |   \leq  \left(\E_i |\int_{i/n}^{(i+1)/n}  (c(X_{t-})-c(X_{i/n}))dL^{\alpha_0}_t |^q\right)^{1/q} \\
 \leq C \left( \E_i \int_{i/n}^{(i+1)/n} \int_{0< |z| \leq K} | c(X_{t-})-c(X_{i/n})|^q |z|^q F^{\alpha_0, \tau}(dz) dt\right)^{1/q}  \\
 \leq C   (\E_i \sup_{i/n \leq t \leq (i+1)/n}|X_t-X_{i/n}|^2)^{1/2} /n^{1/q}.
\end{eqnarray*}
We deduce  \eqref{E:L1-2} by taking  $q $ arbitrarily close to $\alpha_0$, if $\alpha_0>1$, or arbitrarily close to $1$ if $\alpha_0 \leq 1$. 

To finish the proof,  it remains to bound the second term in the right-hand side of  \eqref{E:weak1}.  A second order Taylor expansion gives
\begin{eqnarray}
 |\E_i  (f_n( \sqrt{n} \Delta^{n}_{i} \overline{X} + \theta(1)_i^n) - f_n( \sqrt{n} \Delta^{n}_{i} \overline{X} )) | \leq |\E_i  f'_n( \sqrt{n} \Delta^{n}_{i} \overline{X} )\theta(1)_i^n |
 + C \ln(n)^p \E_i |\theta(1)_i^n|^2,
\end{eqnarray}
where the last term is bounded by \eqref{E:intsquare}. We end  the proof of \eqref{E:weak} by showing  for $\epsilon >0$
\begin{eqnarray} \label{E:L1-1}
 |\E_i  f'_n( \sqrt{n} \Delta^{n}_{i} \overline{X} )\theta(1)_i^n |\leq \left\{
\begin{array}{l}
C_{\epsilon}/n^{1/ \alpha_0 - \epsilon}, \; \text{if} \; \alpha_0 >1,  \\
C_{\epsilon}/n^{1- \epsilon}, \; \text{if} \; \alpha_0 \leq 1.
\end{array} \right.
\end{eqnarray}
To prove \eqref{E:L1-1}, we write with It\^{o}'s formula 
\[ a(X_t) -a(X_s) = \int_s^t \overline{b}_v dv + \int_s^t (a'a)(X_v)dB_v
    + \int_s^t \int \overline{c}_{v-} \tilde{N}^{\alpha_0, \tau}(dz,dv),\]
where
\begin{align*}
    \overline{b}_v & = \frac{1}{2} (a ^{\prime \prime} a)(X_v) + (a^{\prime} b)(X_v) \\
    &  + \int_{ |z| \leq K} (a(X_{v-}+ \delta_0 c(X_{v-}) z)- a(X_{v-})-\delta a'(X_{v-}) c(X_{v-}) z) F^{\alpha_0, \tau}(dz), \\
    \overline{c}_{v-}& = a(X_{v-}+ \delta_0 c(X_{v-}) z)- a(X_{v-}).
\end{align*}
By assumption we have  $| \overline{b}_v | \leq C$ and $|  \overline{c}_{v-}| \leq C|z|  1_{ 0< |z| \leq K}$.
Then we obtain  the decomposition
\begin{equation} \label{eq:Dtheta1}
\theta(1)_i^n=\mu(1)_i^n + \mu(2)_i^n + \mu(3)_i^n +\mu(4)_i^n,
\end{equation}
with
\begin{align*}
 \mu(1)_i^n   &  =\sqrt{n}  \int_{i/n}^{(i+1)/n} \left(\int_{i/n}^t \overline{b}_v dv \right)dB_t ,\\
\mu(2)_i^n  &   = \sqrt{n}  (a' a)(X_{i/n} )\int_{i/n}^{(i+1)/n}(B_t - B_{i/n})dB_t +  \frac{1}{\sqrt{n}} b(X_{i/n}), \\
\mu(3)_i^n  & = \sqrt{n}  \int_{i/n}^{(i+1)/n}  \left(\int_{i/n}^t ((a'a)(X_v)-(a' a)(X_{i/n}))d B_v \right) dB_t, \\
\mu(4)_i^n   &  = \sqrt{n}  \int_{i/n}^{(i+1)/n} \left(\int_{i/n}^t \int\overline{c}_{v-} \tilde{N}^{\alpha_0, \tau}(dz,dv)\right) dB_t.
\end{align*}
Consequently, it yields
\begin{align*}
 |\E_i  f'_n( \sqrt{n} \Delta^{n}_{i} \overline{X} )\theta(1)_i^n | & \leq C \ln(n)^p \E_i ( | \mu(1)_i^n| + | \mu(3)_i^n | + | \mu(4)_i^n |) 
 +   |\E_i  f'_n( \sqrt{n} \Delta^{n}_{i} \overline{X} )\mu(2)_i^n | .
\end{align*}
We check easily from Burkholder's inequality
$$
\E_i ( | \mu(1)_i^n| + | \mu(3)_i^n |) \leq C/n,
$$
and with similar arguments as in the proof of \eqref{E:L1-2}
$$
 \E_i | \mu(4)_i^n | \leq \left\{
 \begin{array}{l}
C_{\epsilon}/n^{1/ \alpha_0 - \epsilon}, \; \text{if} \; \alpha_0 >1,  \\
C_{\epsilon}/n^{1- \epsilon}, \; \text{if} \; \alpha_0 \leq 1.
 \end{array}
 \right.
$$
For the last term, we separate the Brownian contribution from the jumps by setting $\Delta^{n}_{i} \overline{X} = \Delta^{n}_{i} \overline{X}^{1}+ \Delta^{n}_{i} \overline{X}^{2}$ with
$$
 \Delta^{n}_{i} \overline{X}^{1}=a(X_{i/n})\Delta_i^nB \quad \text{and} \quad  \Delta^{n}_{i} \overline{X}^{2}=\delta_0  c(X_{i/n}) \Delta_i^n L^{\alpha_0}.
$$
It yields
$$
\E_{i}  f'_n( \sqrt{n} \Delta^{n}_{i} \overline{X} )\mu(2)_i^n =\E_i  f'_n( \sqrt{n} \Delta^{n}_{i} \overline{X}^{1} )\mu(2)_i^n+ \E_i (f'_n( \sqrt{n} \Delta^{n}_{i} \overline{X} )-f'_n( \sqrt{n} \Delta^{n}_{i} \overline{X}^{1} ) ) \mu(2)_i^n .
$$
Using that $f'_n$ is odd, an explicit calculus gives $\E_i  f'_n( \sqrt{n} \Delta^{n}_{i} \overline{X}^{1} )\mu(2)_i^n=0$. For the second term, using  the bound for $f_n^{\prime \prime}$ and the independence between $\mu(2)_i^n$ and $\Delta^{n}_{i} \overline{X}^{2}$ we have
\begin{align*}
\E_i| (f'_n( \sqrt{n} \Delta^{n}_{i} \overline{X} )-f'_n( \sqrt{n} \Delta^{n}_{i} \overline{X}^{1} ) ) \mu(2)_i^n| & \leq \ln(n)^p \sqrt{n} \E_i |\mu(2)_i^n | \E_i|\Delta^{n}_{i} \overline{X}^{2}|,
\end{align*}
where obviously $\E_i |\mu(2)_i^n |  \leq C/ \sqrt{n}$. Moreover with $q=\alpha_0+ \epsilon$ if  $\alpha_0>1$ or $q=1+ \epsilon$ otherwise and using once again Burkholder's inequality for a stochastic integral with respect to a compensated Poisson measure (as in the proof of \eqref{E:L1-2}), we can show
$$
\E_i|\Delta^{n}_{i} \overline{X}^{2}| \leq  \E_i^{1/q}|\Delta^{n}_{i} \overline{X}^{2}|^q
\leq 
 \left\{
 \begin{array}{l}
C_{\epsilon}/n^{1/ \alpha_0 - \epsilon}, \; \text{if} \; \alpha_0 >1,  \\
C_{\epsilon}/n^{1-\epsilon}, \; \text{if} \; \alpha_0 \leq 1,
 \end{array}
 \right.
$$
and we finally deduce
$$
\E_i| (f'_n( \sqrt{n} \Delta^{n}_{i} \overline{X} )-f'_n( \sqrt{n} \Delta^{n}_{i} \overline{X}^{1} ) ) \mu(2)_i^n| \leq 
 \left\{
 \begin{array}{l}
C_{\epsilon}/n^{1/ \alpha_0 - \epsilon}, \; \text{if} \; \alpha_0 >1,  \\
C_{\epsilon}/n^{1-\epsilon}, \; \text{if} \; \alpha_0 \leq 1.
 \end{array}
 \right.
$$
This achieves the proof of \eqref{E:L1-1}. 


\subsection{Convolution} \label{Ss:Paprox}
\subsubsection{Proof of Lemma \ref{L:I}}
Let $K\subset (0,2)$ be a compact set. We prove that $\exists C>0, \; \forall \alpha \in K,$ for $y$ large
\begin{equation}\label{eq:LemmeIpreuve}
    \Bigl|\mathcal{I}_{\alpha}^{(k, l)}(y) - \psi_{\alpha}^{(l)}(y) \int\mathcal{D}^{(k)}(\phi)(z)dz \Bigr| \leq C \frac{\psi_{\alpha}^{(l)}(y)}{\sqrt{|y|}},
\end{equation}
which implies \eqref{eq:Icaleq}. \eqref{eq:D} is a direct consequence as $\int \phi(z)dz=1$. 
We conclude \eqref{eq:majIcal} using that $y \to \mathcal{I}_{\alpha}^{(0, 0)}(y)$ is continuous, and that $\forall y, \; 0 < \mathcal{I}_{\alpha}^{(0, 0)}(y)$, so that $\mathcal{I}_{\alpha}^{(0, 0)}$ is lower bounded by a positive constant on any compact.

We prove \eqref{eq:LemmeIpreuve} by following the proof of (A.5) in the Supplement to Aït-Sahalia and Jacod \cite{AitSahalia2012}. With the notations $A_y=\{z: |z|>1, \; |y-z|\geq \sqrt{|y|} \}$ and $\overline{A}_y=\{z: |z|>1, \; |y-z|< \sqrt{|y|} \}$ we have
$$\mathcal{I}_{\alpha}^{(k, l)}(y) = \int_{A_y} \mathcal{D}^{(k)}(\phi)(y-z) \psi_{\alpha}^{(l)}(z)dz  + \int_{\overline{A}_y} \mathcal{D}^{(k)}(\phi)(y-z) \psi_{\alpha}^{(l)}(z)dz  := I_{\alpha}(y) + \overline{I}_{\alpha}(y).$$
We note that $\mathcal{D}^{(k)}(\phi)(y)=P_{2k}(x)\phi(y)$ for $P_{2k}$ a polynomial of order $2k$.
On $A_y$ we have $|\mathcal{D}^{(k)}(\phi)(y-z)| \leq C \phi(\frac{y-z}{2}) \leq C e^{-|y|/4}$. As $\sup_{\alpha \in K} \int \psi_{\alpha}^{(l)}< \infty$, 
$$\exists C>0, \; \forall \alpha \in K, \; \forall y, \quad |I_{\alpha}(y)| \leq C e^{-|y|/4},$$ and we conclude that $I_{\alpha}$ is negligible. We now turn to $\overline{I}_{\alpha}$. We set
$$\eta_{\alpha, y}(z) = \frac{\psi_{\alpha}^{(l)}(z)}{\psi_{\alpha}^{(l)}(y)} \mathbb{1}_{\overline{A}_y}(z) - 1,$$
with this notation we have
$$\frac{\overline{I}_{\alpha}(y)}{\psi_{\alpha}^{(l)}(y)}-\int\mathcal{D}^{(k)}(\phi)(z)dz=\int \mathcal{D}^{(k)}(\phi)(y-z)\eta_{\alpha, y}(z)dz.$$   
Using Taylor's formula, we can write
$$\eta_{\alpha, y}(z) = -\mathbb{1}_{A_y}(z) + (z-y) \frac{(\psi_{\alpha}^{(l)})'(c)}{\psi_{\alpha}^{(l)}(y)}\mathbb{1}_{\overline{A}_y}(z),$$    
for some $c\in [y,z] \subset [y-\sqrt{|y|} ,y+\sqrt{|y|} ]\subset[y/2, 2y]$ from the definition of $\overline{A}_y$ and for $y$ large.
Then, using again the definition of $\overline{A}_y$, we have $\exists C>0, \; \forall \alpha \in K,$ such that for $y$ large
$$|\eta_{\alpha, y}(z)| \leq \mathbb{1}_{A_y}(z) + C\frac{1}{\sqrt{|y|}}. $$
Hence $\exists C>0, \; \forall \alpha \in K,$ such that for $y$ large
\begin{align*}
    \Bigl|\frac{\overline{I}_{\alpha}(y)}{\psi_{\alpha}^{(l)}(y)}-\int\mathcal{D}^{(k)}(\phi)(z)dz \Bigr|&\leq \int_{A_y} |\mathcal{D}^{(k)}(\phi)(y-z)|dz + C\frac{1}{\sqrt{|y|}} \int |\mathcal{D}^{(k)}(\phi)(y-z)| dz\\
    \leq C \Bigl(\int_{A_y} & e^{-|y-z|^2/8}e^{-|y-z|^2/8}dz + \frac{1}{\sqrt{|y|}}\Bigr) \leq C\Bigl(e^{-|y|/8}+\frac{1}{\sqrt{|y|}}\Bigr),
\end{align*}
and we conclude \eqref{eq:LemmeIpreuve}.

\subsubsection{Proof of Lemma \ref{l:approxD}}
We first prove \eqref{eq:funcapprox}. These results are obtained from estimate (A.10) of the Supplement to Aït-Sahalia and Jacod \cite{AitSahalia2012}. We detail the proof for $g_{\alpha}$. From Section 14 in Sato \cite{Sato}
$$\varphi_\alpha(z) \underset{z\to\infty}{=} \frac{c_\alpha}{|z|^{\alpha+1}} + O\Bigl(\frac{1}{|z|^{2\alpha+1}}\Bigr),$$
with $c_\alpha$ defined in \eqref{eq:calpha}.
More precisely, let $K\subset (0,2)$ be a compact set. We obtain from Section 14 in Sato \cite{Sato} that
\begin{align}
    \exists C>0, \; \forall \alpha \in K, \; \forall z>3, \quad &|\varphi_\alpha(z) - c_\alpha \psi_{\alpha}^{(0)}(z)| \leq C \psi_{2\alpha}^{(0)}(z),\label{eq:eqdensite}\\
    \exists C>0, \; \forall \alpha \in K, \; \forall z \neq 0, \quad  &\varphi_\alpha(z) \leq \frac{C}{|z|^{\alpha+1}},\label{eq:majdensite}
\end{align}
where $\psi_{\alpha}^{(p)}$ is defined in \eqref{eq:fp}. We write the decomposition
    \begin{align}
       g_{\alpha}(y, w) &= \int_{|z|\leq1/w} \bigl(\mathcal{D}(\phi)(y-zw)-\mathcal{D}(\phi)(y)\bigr)\varphi_\alpha(z) dz \label{eq:gdecomp} \\
       &+ \int_{|z|>1/w} \bigl(\mathcal{D}(\phi)(y-zw)-\mathcal{D}(\phi)(y)\bigr)\varphi_\alpha(z) dz 
       + \mathcal{D}(\phi)(y) \int \varphi_\alpha(z) dz,\nonumber
    \end{align}
    with $\int \varphi_\alpha= 1$. We first note using Taylor's formula that $\forall y,\; \forall |x|\leq 1,\;  \exists c \in [y-x, y]$
    \begin{align*}
    \left|\mathcal{D}(\phi)(y-x)-\mathcal{D}(\phi)(y)+\mathcal{D}(\phi)'(y)x\right|&\leq x^2 |\mathcal{D}(\phi)^{\prime \prime}(c)| \leq C x^2 \phi(\frac{y}{2}),
    \end{align*}
    where we used that $|\mathcal{D}(\phi)^{\prime \prime}(c)| \leq C \phi(\frac{y}{2})$, as $\mathcal{D}(\phi)^{\prime \prime}(c)=P(c) \phi(c)$ with P a polynomial.
    Hence, using that $z \to z \varphi_\alpha(z)$ is an odd function and \eqref{eq:majdensite}, $\exists C>0, \; \forall \alpha \in K, \; \forall y$
    \begin{align}
        \Bigl|\int_{|z|\leq1/w} \bigl(\mathcal{D}(\phi)(y-zw) & -\mathcal{D}(\phi)(y) +\mathcal{D}(\phi)'(y)zw \bigr)\varphi_\alpha(z) dz\Bigr| \nonumber \\
        &= \Bigl|\int_{|z|\leq1/w} \bigl(\mathcal{D}(\phi)(y-zw) -\mathcal{D}(\phi)(y)\bigr)\varphi_\alpha(z) dz\Bigr| \nonumber \\
        &\leq  C \phi(\frac{y}{2}) \int_{|z|\leq1/w}(zw)^2  \varphi_\alpha(z) dz \nonumber \\  
        &\leq  C \phi(\frac{y}{2}) \int_{|z|\leq1/w}(zw)^2  \frac{dz}{|z|^{\alpha+1}}  \leq C w^\alpha \phi(\frac{y}{2}). \label{eq:g1}
    \end{align}
    We have from \eqref{eq:majdensite} that $\exists C>0, \; \forall \alpha \in K$
    \begin{equation}\label{eq:g2}
        \int_{|z|>1/w} \varphi_\alpha(z) dz \leq C\int_{|z|>1/w} \frac{dz}{|z|^{\alpha+1}} \leq C w^\alpha.
    \end{equation}
    We get from \eqref{eq:eqdensite} that $ \exists C>0, \; \forall \alpha \in K, \; \forall w \in (0, 1/3],\; \forall y$    
    \begin{align*}
        \Bigl|\int_{|z|>1/w} \mathcal{D}(\phi)(y-zw)\varphi_\alpha(z) dz - c_\alpha \int_{|z|>1/w} & \mathcal{D}(\phi)(y-zw)\psi_{\alpha}^{(0)}(z) dz\Bigr|\\
    &\leq C \Bigl|\int_{|z|>1/w} \mathcal{D}(\phi)(y-zw)\psi_{2\alpha}^{(0)}(z) dz\Bigr|.
    \end{align*}
    After the change of variable $wz \to z$, we have 
    \begin{align*}
        \Bigl|\int_{|z|>1/w} \mathcal{D}(\phi)(y-zw)\varphi_\alpha(z) dz - c_\alpha w^\alpha \int_{|z|>1} & \mathcal{D}(\phi)(y-z)\psi_{\alpha}^{(0)}(z) dz\Bigr|\\& \leq C w^{2\alpha} \Bigl|\int_{|z|>1} \mathcal{D}(\phi)(y-z)\psi_{2\alpha}^{(0)}(z) dz\Bigr|,
    \end{align*}
    i.e. recalling the definition of $\mathcal{I}^{(k,l)}_\alpha$ given in \eqref{eq:Ical}, $ \exists C>0, \; \forall \alpha \in K, \; \forall w \in (0, 1/3],\; \forall y$    
    \begin{align}\label{eq:g3}
        \Bigl|\int_{|z|>1/w} \mathcal{D}(\phi)(y-zw)\varphi_\alpha(z) dz - c_\alpha w^\alpha \mathcal{I}_\alpha^{(1, 0)}(y)\Bigr| \leq C w^{2\alpha} \bigl|\mathcal{I}_{2\alpha}^{(1, 0)}(y)\bigr|.
    \end{align}
    Combining the decomposition \eqref{eq:gdecomp} with \eqref{eq:g1}, \eqref{eq:g2} and \eqref{eq:g3}, we have proved that $\exists C>0, \; \forall \alpha \in K, \; \forall w \in (0, 1/3], \; \forall y$
    \begin{equation}
        \bigl|g_{\alpha}(y, w) - \mathcal{D}(\phi)(y)- c_\alpha w^\alpha \mathcal{I}_\alpha^{(1, 0)}(y)\bigr|\leq C \left[ w^\alpha \left(\phi(\frac{y}{2})+|\mathcal{D}(\phi)(y)|\right) +w^{2\alpha} \bigl|\mathcal{I}_{2\alpha}^{(1, 0)}(y)\bigr| \right].
    \end{equation}
    Using that from \eqref{eq:majIcal} $|\mathcal{I}_{\alpha}^{(1, 0)}(y)| \leq C \psi_{\alpha}^{(0)}(y)$ and $|\mathcal{I}_{2\alpha}^{(1, 0)}(y)| \leq C \psi_{\alpha}^{(0)}(y)$, and that $|\mathcal{D}(\phi)(y)| \leq C \phi(\frac{y}{2}) \leq C \psi_{\alpha}^{(0)}(y)$, we conclude the estimate of $g_\alpha$ in \eqref{eq:funcapprox}.

    The estimates for $f_\alpha$, $h_\alpha$ and $k_\alpha$ in \eqref{eq:funcapprox} are proved similarly, where we have from Section 14 in Sato \cite{Sato}
    \begin{align*}
        \mathcal{D}(\varphi_\alpha)(z) &\underset{z\to\infty}{=} -\frac{ \alpha c_\alpha}{|z|^{\alpha+1}}+ O\Bigl(\frac{1}{|z|^{2\alpha+1}}\Bigr), \\
        \partial_\alpha \varphi_\alpha(z) &\underset{z\to\infty}{=} \frac{\partial_\alpha c_\alpha - c_\alpha\ln|z|}{|z|^{\alpha+1}}+ O\Bigl(\frac{\ln|z|}{|z|^{2\alpha+1}}\Bigr).
    \end{align*}
    More precisely, we obtain from Section 14 in Sato \cite{Sato} that
    \begin{align*}
        \exists C>0, \; \forall \alpha \in K, \; \forall z>3, \quad &|\mathcal{D}(\varphi_\alpha)(z) + \alpha c_\alpha \psi_{\alpha}^{(0)}(z)| \leq  C \psi_{2\alpha}^{(0)}(z),\\
        &|\partial_\alpha \varphi_\alpha(z) - (\partial_\alpha c_\alpha \psi_{\alpha}^{(0)}(z) - c_\alpha \psi_{\alpha}^{(1)}(z))| \leq C \psi_{2\alpha}^{(1)}(z),\\
        \exists C>0, \; \forall \alpha \in K, \; \forall z \neq 0, \quad &|\mathcal{D}(\varphi_\alpha)(z)| \leq \frac{C}{|z|^{\alpha+1}}, \quad |\partial_\alpha \varphi_\alpha(z)| \leq \frac{C\ln|z|}{|z|^{\alpha+1}}.
    \end{align*}

We now prove \eqref{eq:majfghk}. The bounds for $f_\alpha$, $h_\alpha$ and $k_\alpha$ are immediate consequences of \eqref{eq:majIcal} and \eqref{eq:funcapprox}.
We detail the bound for $g_\alpha$. As $\mathcal{D}(\phi)(y) = (1-y^2)\phi(y)$, we have from \eqref{eq:majIcal} and \eqref{eq:funcapprox} that $\exists C>0, \; \forall \alpha \in K, \; \forall w \in (0, 1/3], \; \forall y$
$$    |g_\alpha(y, w)| \leq C \big((1+|y|^{2})\phi(y) + w^\alpha \psi_{\alpha}^{(0)}(y)\big).$$
We refine this bound. We define $y^*(w)$ as the positive solution of $(1+|y|^{2})\phi(y)= w^{\alpha} \psi_{\alpha}^{(0)}(y)$, then $y^*(w) \underset{w\to0}{\sim}\sqrt{2\alpha \ln(1/w)}$.  As $y \to \frac{(1+|y|^{2})\phi(y)}{\psi_{\alpha}^{(0)}(y)}$ is decreasing after a certain rank, we have
\begin{align}
        \forall |y|< y^*(w), \;   
    |g_\alpha(y, w)| &\leq C \bigl((1+|y|^{2})\phi(y) + w^\alpha \psi_{\alpha}^{(0)}(y)\bigr) \leq C(1+y^{2}) \phi(y) \leq C \ln(1/w) \phi(y), \nonumber \\
    \forall |y|\geq y^*(w), \; 
    |g_\alpha(y, w)| &\leq C \bigl((1+|y|^{2})\phi(y) + w^\alpha \psi_{\alpha}^{(0)}(y)\bigr) \leq C w^{\alpha} \; \psi_{\alpha}^{(0)}(y),\label{eq:refineboundg}
\end{align}
and we conclude.

\subsubsection{Proof of Lemma \ref{L:fonctionsApprox}}
We detail the proof of \eqref{eq:etape1Approxf}. From Lemma \ref{l:approxD}, $\forall w \in (0, 1/3], \, \forall y $
$$\bigl|f_{\alpha}(y, w) -  \phi(y) - c_{\alpha} w^{\alpha} \mathcal{I}_{\alpha}^{(0, 0)}(y)\bigr|\leq C\bigl(w^{\alpha} \phi(\frac{y}{2}) + w^{2\alpha}\psi_{\alpha}^{(0)}(y)\bigr).$$
Hence, using \eqref{eq:D}, we have for $n$ large $\forall |y|>1 $
\begin{align*}
    \bigl|f_{\alpha}(y, w_n(\theta)) &-  \phi(y) - c_{\alpha} w_n(\theta)^{\alpha} \psi_{\alpha}^{(0)}(y)\bigr|\\
    &\leq C\Bigl(w_n(\theta)^{\alpha} \phi(\frac{y}{2}) + w_n(\theta)^{\alpha} \psi_{\alpha}^{(0)}(y) \frac{1}{\sqrt{|y|}}+ w_n(\theta)^{2\alpha}\psi_{\alpha}^{(0)}(y)\Bigr).
\end{align*}
Noting that 
$$\forall |y|>1, \quad \phi(\frac{y}{2}) \leq C \psi_{\alpha}^{(0)}(y) \frac{1}{\sqrt{|y|}},$$
we obtain using \eqref{eq:majfghk} that $\forall \Gamma \geq 1 , \forall |y|>1 $
\begin{align*}
    \bigl|f_{\alpha}(y, w_n(\theta)) -  \phi(y) - c_{\alpha} w_n(\theta)^{\alpha} \psi_{\alpha}^{(0)}(y)\bigr|&\leq C\Bigl(\frac{1}{\sqrt{\Gamma}}+ w_n(\theta)^{\alpha}\Bigr) w_n(\theta)^{\alpha} \psi_{\alpha}^{(0)}(y)\\
    &\leq C\Bigl(\frac{1}{\sqrt{\Gamma}}+ \frac{1}{n^{1-\alpha/2}}\Bigr)f_{\alpha}(y, w_n(\theta)).
\end{align*}
\eqref{eq:etape1Approxh} is obtained similarly.

We turn to \eqref{eq:Lbarre}. Noting that 
$$\ln(1/w_n(\theta))= \frac{2-\alpha}{2 \alpha}\ln(n) - \ln(\frac{\delta }{\sigma}),$$
we have from Lemma \ref{l:approxD} that for $n$ large, $\forall y$
\begin{align*}
    \bigl|h_{\alpha}(y, w_n(\theta)) &+ \alpha c_{\alpha}  w_n(\theta)^{\alpha} \mathcal{I}_{\alpha}^{(0, 0)}(y)\bigr| \leq C (w_n(\theta)^{\alpha} \phi(\frac{y}{2}) + w_n(\theta)^{2\alpha} \psi_{\alpha}^{(0)}(y)),\\
    \Bigl|k_{\alpha}(y, w_n(\theta)) 
    &+ c_{\alpha} w_n(\theta)^{\alpha} \mathcal{I}_{\alpha}^{(0, 1)}(y) 
     + w_n(\theta)^{\alpha} \bigl[c_{\alpha}\frac{2-\alpha}{2 \alpha} \ln(n) - c_{\alpha} \ln(\frac{\delta }{\sigma}) -\partial_\alpha c_{\alpha} \bigr]\mathcal{I}_{\alpha}^{(0, 0)}(y) 
    \Bigr|\\
    &\leq C  \Bigl(\ln(n)w_n(\theta)^{\alpha} \phi(\frac{y}{2})
    + \ln(n) w_n(\theta)^{2\alpha} \psi_{\alpha}^{(0)}(y) + w_n(\theta)^{2\alpha} \psi_{\alpha}^{(1)}(y) \Bigr).
\end{align*}
Recalling the definition of $l_{\alpha}$ given in \eqref{def:l}, the terms in $\ln(n) w_n(\theta)^{\alpha} \mathcal{I}_{\alpha}^{(0, 0)}$ cancel out, hence
\begin{align*}
    \Bigl|l_{\alpha}(y, w_n(\theta)) &+  c_{\alpha} w_n(\theta)^{\alpha} \mathcal{I}_{\alpha}^{(0, 1)}(y) - \frac{c_{\alpha}}{2} \ln(\ln n) w_n(\theta)^{\alpha} \mathcal{I}_{\alpha}^{(0, 0)}(y) 
-  w_n(\theta)^{\alpha} \bigl[c_{\alpha}\ln(\frac{\delta }{\sigma})+\partial_\alpha c_{\alpha}\bigr]\mathcal{I}_{\alpha}^{(0, 0)}(y)\Bigr|\\
&\leq C \Bigl(\ln(n)w_n(\theta)^{\alpha} \phi(\frac{y}{2}) + \ln(n)w_n(\theta)^{2\alpha} \psi_{\alpha}^{(0)}(y)+w_n(\theta)^{2\alpha} \psi_{\alpha}^{(1)}(y) \Bigr).
\end{align*}
We conclude using \eqref{eq:D} and that $\forall y, \; \phi(\frac{y}{2}) \leq C \psi_{\alpha}^{(0)}(y)\;\phi(\frac{y}{3})$.

\subsubsection{Proof of Proposition \ref{L:tout}}
With  $M_n=\sqrt{(2- \alpha)\ln(n)}$,  $\Psi^{(p)}(z)=\int_{|y|>z} \psi_{\alpha}^{(p)}(y) dy$ (for $p \in \{0,1\}$) and $K(\theta)= \partial_\alpha c_{\alpha} + c_{\alpha} \ln(\frac{\delta }{\sigma})$,
we obtain by combining Lemma \ref{L:IntGamman} and Lemma \ref{l:ramenerInt} below
$$\bigg| \int \frac{h_{\alpha}^2 }{f_{\alpha}} (y, w_n(\theta))dy 
   - \alpha^2 c_{\alpha} w_n(\theta)^{\alpha} \Psi^{(0)} (M_n)\bigg|\leq C \frac{1}{n^{1 -\alpha/2 } \ln(n)^{\alpha/2}} \frac{\ln(\ln n)}{\ln(n)^{1/4\wedge\alpha}},$$
\begin{align*}
    \bigg| \int \frac{h_{\alpha} l_{\alpha}}{f_{\alpha}} (y, w_n(\theta))dy 
   - \alpha w_n(\theta)^{\alpha}\Bigl( c_{\alpha} \Psi^{(1)} (M_n) -\left[\frac{c_{\alpha}}{2} \ln(\ln n) +  K (\theta) \right] \Psi^{(0)}(M_n) \Bigr)\bigg| \\
\leq  C \frac{1}{n^{1 -\alpha/2 } \ln(n)^{\alpha/2}} \frac{(\ln(\ln n))^2}{\ln(n)^{1/4\wedge\alpha}},
\end{align*}
\begin{align*}
    \bigg| \int \frac{(l_{\alpha})^2}{f_{\alpha}} (y, w_n(\theta))dy - \frac{w_n(\theta)^{\alpha}}{c_{\alpha}}
    \bigg(c_{\alpha}^2 \Psi^{(2)} (M_n) - 2  c_{\alpha}  \left[\frac{c_{\alpha}}{2} \ln(\ln n) + K(\theta)  \right] \Psi^{(1)} (M_n) \\
    +\bigg[\frac{c_{\alpha}^2}{4} (\ln(\ln n))^2  
    +  c_{\alpha}\ln(\ln n) K(\theta) +  K( \theta)^2 \bigg] \Psi^{(0)} (M_n) \bigg)\bigg| \\
   \leq  C \frac{1}{n^{1 -\alpha/2 } \ln(n)^{\alpha/2}} \frac{(\ln(\ln n))^3}{\ln(n)^{1/4\wedge\alpha}}.
\end{align*}
After integrating by parts when necessary, we have
\begin{align*}
\Psi^{(0)}(M_n)&=\int_{|y|>M_n} \psi_{\alpha}^{(0)}(y)dy = \frac{2}{\alpha M_n^{\alpha}} = \frac{2}{\alpha (2-\alpha)^{\alpha/2} \ln(n)^{\alpha/2}},\\
\Psi^{(1)}(M_n)&=\int_{|y|>M_n} \psi_{\alpha}^{(1)}(y)dy = \frac{2\ln(M_n)+2/\alpha}{\alpha M_n^{\alpha}} = \frac{\ln(\ln n) + \ln(2-\alpha)+2/\alpha}{\alpha (2-\alpha)^{\alpha/2} \ln(n)^{\alpha/2}},\\
\Psi^{(2)}(M_n)&=\int_{|y|>M_n} \psi_{\alpha}^{(2)}(y)dy = \frac{2\ln(M_n)^2 + 4 \ln(M_n)/\alpha+4/\alpha^2}{\alpha M_n^{\alpha}} \\
& = \frac{(\ln(\ln n))^2/2 + \ln(\ln n)(\ln(2-\alpha)+2/\alpha)
+\ln(2-\alpha)^2/2 + 2\ln(2-\alpha)/ \alpha+ 4/\alpha^2}{\alpha (2-\alpha)^{\alpha/2} \ln(n)^{\alpha/2}}.
\end{align*}
We observe that both the terms in $(\ln(\ln n))^2$ and $\ln(\ln n)$ cancel out, and we conclude.

\begin{lem}\label{L:IntGamman}
With $\Gamma_n=\sqrt{(1-\alpha/2) \ln(n)}$, we have
\begin{align}
    \bigg| \int \frac{h_{\alpha}^2}{f_{\alpha}} (y, w_n(\theta))dy 
- w_n(\theta)^{2 \alpha} \alpha^2 c_{\alpha}^2 \int_{|y|>\Gamma_n} \frac{(\psi_{\alpha}^{(0)}(y))^2}{\phi(y) + c_{\alpha} w_n(\theta)^{\alpha} \psi_{\alpha}^{(0)}(y)}dy \bigg| \nonumber \\
\leq C \frac{1}{\ln(n)^{1/4}} \frac{1}{n^{1-\alpha/2} \ln(n)^{\alpha/2}}, \label{eq:IntGammaH2} 
\end{align}
\begin{align}
    \bigg| \int \frac{h_{\alpha} l_{\alpha}}{f_{\alpha}} (y, w_n(\theta))dy 
- w_n(\theta)^{2 \alpha}  \bigg[ \alpha c_{\alpha}^2 \int_{|y|>\Gamma_n} \frac{\psi_{\alpha}^{(1)}(y) \psi_{\alpha}^{(0)}(y)}{\phi(y) + c_{\alpha} w_n(\theta)^{\alpha} \psi_{\alpha}^{(0)}(y)}dy \nonumber \\
-  \alpha c_{\alpha} \bigl(\frac{c_{\alpha}}{2} \ln(\ln n) + K( \theta)\bigr)  \int_{|y|>\Gamma_n} \frac{(\psi_{\alpha}^{(0)}(y))^2}{\phi(y) + c_{\alpha} w_n(\theta)^{\alpha} \psi_{\alpha}^{(0)}(y)}dy \bigg] \bigg|  \nonumber \\
\leq C \frac{\ln(\ln n)}{\ln(n)^{1/4}} \frac{1}{n^{1-\alpha/2} \ln(n)^{\alpha/2}}, \label{eq:IntGammaHL}
\end{align}
\begin{align}
\biggl| \int \frac{l_{\alpha}^2}{f_{\alpha}} (y, w_n(\theta))dy 
- w_n(\theta)^{2 \alpha}  \biggl[ c_{\alpha}^2 & \int_{|y|>\Gamma_n} \frac{(\psi_{\alpha}^{(1)}(y))^2}{\phi(y) + c_{\alpha} w_n(\theta)^{\alpha} \psi_{\alpha}^{(0)}(y)}dy \nonumber \\
-  2 c_{\alpha} \bigl(\frac{c_{\alpha}}{2} \ln(\ln n) + K(\theta)\bigr) & \int_{|y|>\Gamma_n} \frac{\psi_{\alpha}^{(1)}(y) \psi_{\alpha}^{(0)}(y)}{\phi(y) + c_{\alpha} w_n(\theta)^{\alpha} \psi_{\alpha}^{(0)}(y)}dy  \nonumber \\
+ \bigl(\frac{c_{\alpha}}{2} \ln(\ln n) + K(\theta)\bigr)^2 & \int_{|y|>\Gamma_n} \frac{(\psi_{\alpha}^{(0)}(y))^2}{\phi(y) + c_{\alpha} w_n(\theta)^{\alpha} \psi_{\alpha}^{(0)}(y)}dy\biggr] \biggr|  \nonumber \\
& \leq C \frac{(\ln(\ln n))^2}{\ln(n)^{1/4}} \frac{1}{n^{1-\alpha/2} \ln(n)^{\alpha/2}}, \label{eq:IntGammaL2}
\end{align}
where $K(\theta)= \partial_\alpha c_{\alpha} + c_{\alpha} \ln(\frac{\delta }{\sigma})$.
\end{lem}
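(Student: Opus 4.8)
The plan is to prove all three estimates by the same two-region strategy. Writing $w=w_n(\theta)$, I introduce the leading-order profiles supplied by Lemma \ref{L:fonctionsApprox},
\[ H(y)=-\alpha c_\alpha w^\alpha \psi_\alpha^{(0)}(y),\qquad L(y)=w^\alpha\Bigl(-c_\alpha\psi_\alpha^{(1)}(y)+\bigl[\tfrac{c_\alpha}{2}\ln(\ln n)+K(\theta)\bigr]\psi_\alpha^{(0)}(y)\Bigr),\qquad F(y)=\phi(y)+c_\alpha w^\alpha\psi_\alpha^{(0)}(y), \]
which are the leading terms of $h_\alpha$, $l_\alpha$ and $f_\alpha$. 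A direct expansion shows that $H^2/F$, $HL/F$ and $L^2/F$ are precisely the integrands on the right-hand sides of \eqref{eq:IntGammaH2}--\eqref{eq:IntGammaL2}, so it suffices to compare $\int h_\alpha^2/f_\alpha$ with $\int_{|y|>\Gamma_n}H^2/F$, and likewise for the two other products. I would split each integral over $\mathbb{R}$ at the threshold $|y|=\Gamma_n=\sqrt{(1-\alpha/2)\ln n}$.

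First I would dispose of the region $|y|\le\Gamma_n$. There the crude bounds \eqref{eq:majfghk} give $|h_\alpha|\le Cw^\alpha\psi_\alpha^{(0)}$, $|l_\alpha|\le C\ln(n)w^\alpha\psi_\alpha^{(1)}$ and $f_\alpha\ge\tfrac1C\phi$, so all three integrands are dominated by $C\ln(n)^2 w^{2\alpha}(\psi_\alpha^{(1)})^2/\phi$. Because of the factor $1/\phi$ this integral is governed by its endpoint $|y|=\Gamma_n$, and a Laplace-type estimate bounds it by $n^{-(1-\alpha/2)/2}$ (times polylogarithmic factors) relative to the target order $w^\alpha/\ln(n)^{\alpha/2}\sim(n^{1-\alpha/2}\ln(n)^{\alpha/2})^{-1}$; the extra power $n^{-(1-\alpha/2)/2}$ crushes every polylogarithm, so this contribution is absorbed into the stated error. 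The same computation shows that enlarging the target integrals from $|y|>\Gamma_n$ to $\mathbb{R}$ changes them only at the same negligible order, which is why the cutoff $\Gamma_n$ may be imposed freely.

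On the main region $|y|>\Gamma_n$ I would substitute the refined expansions of Lemma \ref{L:fonctionsApprox}, setting $R_h=h_\alpha-H$, $R_l=l_\alpha-L$, $R_f=f_\alpha-F$ and using the algebraic identities
\[ \frac{h_\alpha^2}{f_\alpha}-\frac{H^2}{F}=\frac{R_h\,(h_\alpha+H)}{f_\alpha}-\frac{H^2R_f}{f_\alpha F},\qquad \frac{h_\alpha l_\alpha}{f_\alpha}-\frac{HL}{F}=\frac{R_h\,l_\alpha+H\,R_l}{f_\alpha}-\frac{HL\,R_f}{f_\alpha F}, \]
together with the analogous identity $l_\alpha^2-L^2=R_l(l_\alpha+L)$. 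Each remainder is controlled by Lemma \ref{L:fonctionsApprox}, whose dominant factor is $1/\sqrt{\Gamma_n}\sim\ln(n)^{-1/4}$ (the competing terms $\ln(n)\phi(\Gamma_n/3)$ and $\ln(n)/n^{1-\alpha/2}$ being exponentially, respectively polynomially, smaller). Combining $|R_h|\le Cw^\alpha\psi_\alpha^{(0)}/\sqrt{\Gamma_n}$, $|R_f|\le CF/\sqrt{\Gamma_n}$, $|h_\alpha+H|\le Cw^\alpha\psi_\alpha^{(0)}$ and $f_\alpha\ge\tfrac1C F$, every term for the first estimate reduces to $1/\sqrt{\Gamma_n}$ times an integral of the same type as the main term $w^{2\alpha}\int_{|y|>\Gamma_n}(\psi_\alpha^{(0)})^2/F$, which produces exactly the factor $\ln(n)^{-1/4}$ of \eqref{eq:IntGammaH2}.

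The delicate point, and the main obstacle, is the bookkeeping of the $\ln(\ln n)$ powers in \eqref{eq:IntGammaHL} and especially \eqref{eq:IntGammaL2}. Here $L$ carries both a $\psi_\alpha^{(1)}$ term and a $\ln(\ln n)\,\psi_\alpha^{(0)}$ term, and the remainder bound for $R_l$ has the shape $w^\alpha\bigl(\psi_\alpha^{(0)}+\psi_\alpha^{(1)}/\ln(\ln n)\bigr)\cdot\bigl(\ln(\ln n)/\sqrt{\Gamma_n}+\cdots\bigr)$. Since the integrals concentrate on the scale $|y|\sim M_n=\sqrt{(2-\alpha)\ln n}$, where $\psi_\alpha^{(1)}\sim\tfrac12\ln(\ln n)\,\psi_\alpha^{(0)}$, each occurrence of $L$ or $R_l$ effectively contributes one power of $\ln(\ln n)$; the worst cross terms $R_l(l_\alpha+L)/f_\alpha$ and $L^2R_f/(f_\alpha F)$ therefore yield $(\ln(\ln n))^2/\sqrt{\Gamma_n}$ times the main order, matching the claimed error, while $R_l^2$ is of lower order. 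Verifying that no term of higher $\ln(\ln n)$ power survives — that the quadratic expansion does not overshoot — is precisely what forces the three error bounds to carry the increasing powers $1$, $\ln(\ln n)$, $(\ln(\ln n))^2$, and is the step requiring the most care.
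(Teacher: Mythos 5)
Your proposal is correct and follows essentially the same route as the paper's proof: you introduce the identical approximating profiles $\overline{f}_\alpha=\phi+c_\alpha w^\alpha\psi_\alpha^{(0)}$, $\overline{h}_\alpha=-\alpha c_\alpha w^\alpha\psi_\alpha^{(0)}$, $\overline{l}_\alpha$ (your $F$, $H$, $L$), split at $|y|=\Gamma_n$, kill the inner region by the crude bounds \eqref{eq:majfghk} plus a Laplace-type estimate, and control the outer region by the remainder bounds of Lemma \ref{L:fonctionsApprox}, with the $(\ln\ln n)^j$ error powers arising exactly as in the paper. Your exact algebraic identities for $h_\alpha^2/f_\alpha-H^2/F$ etc.\ are just a repackaging of the paper's four-term triangle-inequality decomposition (the paper uses Cauchy--Schwarz on the cross term where you bound it pointwise, but both yield the same $\ln(n)^{-1/4}$ factor), so the two arguments coincide in substance.
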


\begin{proof}We introduce the approximating functions
\begin{align*}    
\overline{f}_{\alpha}(y, w_n(\theta)) =&\; \phi(y) + c_{\alpha} w_n(\theta)^{\alpha} \psi_{\alpha}^{(0)}(y),\\
\overline{h}_{\alpha}(y, w_n(\theta))=& - \alpha c_{\alpha} w_n(\theta)^{\alpha} \psi_{\alpha}^{(0)}(y),\\
\overline{l}_{\alpha}(y, w_n(\theta)) =& -  c_{\alpha} w_n(\theta)^{\alpha} \psi_{\alpha}^{(1)}(y) 
+ \frac{c_{\alpha}}{2} \ln(\ln n) w_n(\theta)^{\alpha} \psi_{\alpha}^{(0)}(y) 
 +  w_n(\theta)^{\alpha} K( \theta) \psi_{\alpha}^{(0)}(y).    
\end{align*}
We detail the proof of \eqref{eq:IntGammaH2}, and the other two unfold similarly. We decompose the integral to get
\begin{align*}
    \bigg| \int \frac{h_{\alpha}^2}{f_{\alpha}}(y, &w_n(\theta))dy 
    - \int_{|y|>\Gamma_n} \frac{\overline{h}_{\alpha}^2}{\overline{f}_{\alpha}}(y, w_n(\theta))dy \bigg| \\
    &\leq C \bigg(\int_{|y|<\Gamma_n}\frac{h_{\alpha}^2}{f_{\alpha}}(y, w_n(\theta))dy 
    + \int_{|y|>\Gamma_n} \frac{( h_{\alpha} - \overline{h}_{\alpha})^2}{f_{\alpha}}(y, w_n(\theta))dy\\
    &\quad + \int_{|y|>\Gamma_n} \frac{| h_{\alpha} - \overline{h}_{\alpha}| \; |\overline{h}_{\alpha}|}{f_{\alpha}}(y, w_n(\theta))dy
    + \int_{|y|>\Gamma_n} \bigg|\frac{\overline{h}_{\alpha}^2}{\overline{f}_{\alpha}} - \frac{\overline{h}_{\alpha}^2}{f_{\alpha}} \bigg|(y, w_n(\theta))dy \bigg).
\end{align*}
First, we have using \eqref{eq:majfghk} that
\begin{align*}
\int_{|y|<\Gamma_n}\frac{h_{\alpha}^2}{f_{\alpha}}(y, w_n(\theta))dy  
& \leq C w_n(\theta)^{2\alpha} \int_{|y|<\Gamma_n} \frac{(\psi_{\alpha}^{(0)}(y))^2}{\phi(y)} dy  \leq C \frac{\Gamma_n}{n^{2-\alpha} \phi(\Gamma_n)} \leq C \frac{\sqrt{\ln(n)}}{n^{3(1-\alpha/2)/2}}.
\end{align*}
Then, using \eqref{eq:majfghk}, Lemma \ref{L:fonctionsApprox} and recalling that $\int_{|y|>z} \psi_{\alpha}^{(0)}(y)dy = \frac{2}{\alpha z^{\alpha}}$
\begin{align*}
\int_{|y|>\Gamma_n} \frac{( h_{\alpha} - \overline{h}_{\alpha})^2}{f_{\alpha}}&(y, w_n(\theta))dy\\
\leq C &\Bigl(\frac{1}{\sqrt{\Gamma_n}} + \frac{1}{n^{1-\alpha/2}}\Bigr)^2 w_n(\theta)^{2\alpha} \int_{|y|>\Gamma_n}  \frac{(\psi_{\alpha}^{(0)}(y))^2}{\phi(y) + w_n(\theta)^{\alpha} \psi_{\alpha}^{(0)}(y)}dy\\
\leq C &\Bigl(\frac{1}{\sqrt{\Gamma_n}} + \frac{1}{n^{1-\alpha/2}}\Bigr)^2 w_n(\theta)^{\alpha} \int_{|y|>\Gamma_n} \psi_{\alpha}^{(0)}(y)dy\\
\leq C &\frac{1}{\ln(n)^{1/2}} \frac{1}{n^{1-\alpha/2} \ln(n)^{\alpha/2}}.
\end{align*}
Similarly, we use Cauchy–Schwarz inequality and \eqref{eq:majfghk} to obtain
\begin{align*}
\int_{|y|>\Gamma_n} &\frac{| h_{\alpha} - \overline{h}_{\alpha}| \; \overline{h}_{\alpha}}{f_{\alpha}}(y, w_n(\theta))dy \\
&\leq C \left(\frac{1}{\ln(n)^{1/2}}  w_n(\theta)^{\alpha} \int_{|y|>\Gamma_n} \psi_{\alpha}^{(0)}(y)dy \right)^{1/2} \left(w_n(\theta)^{\alpha} \int_{|y|>\Gamma_n} \psi_{\alpha}^{(0)}(y)dy\right)^{1/2}\\
&\leq C \frac{1}{\ln(n)^{1/4}} \frac{1}{n^{1-\alpha/2} \ln(n)^{\alpha/2}}.
\end{align*}
Finally, we have using Lemma \ref{L:fonctionsApprox}
\begin{align*}
\int_{|y|>\Gamma_n} \Bigl|\frac{\overline{h}_{\alpha}^2}{\overline{f}_{\alpha}} - \frac{\overline{h}_{\alpha}^2}{f_{\alpha}}\Bigr|&(y, w_n(\theta))dy = \int_{|y|>\Gamma_n} \Bigl|\frac{\overline{h}_{\alpha}^2}{\overline{f}_{\alpha} f_{\alpha}} (f_{\alpha} - \overline{f}_{\alpha})\Bigr|(y, w_n(\theta))dy\\
\leq C &\Bigl(\frac{1}{\sqrt{\Gamma_n}} + \frac{1}{n^{1-\alpha/2}}\Bigr) \int_{|y|>\Gamma_n} \frac{\overline{h}_{\alpha}^2}{\overline{f}_{\alpha}}(y, w_n(\theta))dy\\
\leq C &\frac{1}{\ln(n)^{1/4}} w_n(\theta)^{\alpha} \int_{|y|>\Gamma_n} \psi_{\alpha}^{(0)}(y)dy\leq C \frac{1}{\ln(n)^{1/4}} \frac{1}{n^{1-\alpha/2} \ln(n)^{\alpha/2}}.
\end{align*}
Combining these results allows us to conclude.
\end{proof}

\begin{lem}\label{l:ramenerInt}
With $\Gamma_n=\sqrt{(1-\alpha/2) \ln(n)}$ and $M_n=\sqrt{(2- \alpha)\ln(n)}$, we have
\begin{align}
      \bigg|\int_{|y|>\Gamma_n} \frac{\psi_{\alpha}^{(p)} (y) \; \psi_{\alpha}^{(0)} (y)}{\phi(y) + c_{\alpha} w_n(\theta)^{\alpha} \psi_{\alpha}^{(0)}(y)}dy - & \frac{\Psi^{(p)}(M_n)}{ c_{\alpha} w_n(\theta)^{\alpha}}   \bigg| 
      \leq C \frac{n^{1-\alpha/2} }{\ln(n)^{\alpha/2}}  \frac{(\ln(\ln n))^{p+1}}{\ln(n)^{1/2\wedge\alpha}},
\end{align}
with $\Psi^{(p)}(z)=\int_{|y|>z} \psi_{\alpha}^{(p)}(y) dy$.
\end{lem}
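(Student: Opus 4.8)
The plan is to turn the integrand into an explicit ratio, to locate the single point where the denominator switches dominant term, and to show that the whole error comes from a thin layer around $|y|\sim M_n$ whose size is governed by the Gaussian tail.

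Since $\Gamma_n=\sqrt{(1-\alpha/2)\ln n}\to\infty$, for $n$ large every $y$ in the domain satisfies $|y|\ge 3$, so by \eqref{eq:fp} we may write $\psi_\alpha^{(p)}(y)=(\ln|y|)^p|y|^{-(\alpha+1)}$ throughout $\{|y|>\Gamma_n\}$. Setting $A:=c_\alpha w_n(\theta)^\alpha$ and $B(y):=\phi(y)|y|^{\alpha+1}$ and multiplying numerator and denominator by $|y|^{\alpha+1}$, the quantity to estimate becomes
\[
J_p:=\int_{|y|>\Gamma_n}\frac{\psi_\alpha^{(p)}(y)}{A+B(y)}\,dy,
\]
compared with $\Psi^{(p)}(M_n)/A$. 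For $y$ large $B$ is strictly decreasing with $B\to 0$, and $A=c_\alpha(\delta/\sigma)^\alpha n^{-(1-\alpha/2)}\to 0$ by \eqref{E:wn}; solving $B(y^\ast)=A$ and taking logarithms gives $(y^\ast)^2=(2-\alpha)\ln n+(\alpha+1)\ln\ln n+O(1)$, that is $y^\ast=M_n+O\!\big(\ln\ln n/\sqrt{\ln n}\big)$. Hence $M_n$ is the leading-order crossover: $A+B\simeq B$ for $|y|<y^\ast$ and $A+B\simeq A$ for $|y|>y^\ast$, so the main term $\Psi^{(p)}(M_n)/A$ is exactly the supercritical contribution.

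Writing $1/(A+B)=1/A-B/(A(A+B))$ on $\{|y|>M_n\}$ and using $\Psi^{(p)}(M_n)=\int_{|y|>M_n}\psi_\alpha^{(p)}$, I would decompose
\[
J_p-\frac{\Psi^{(p)}(M_n)}{A}=E_1-E_2,\qquad E_1=\int_{\Gamma_n<|y|<M_n}\frac{\psi_\alpha^{(p)}}{A+B}\,dy,\quad E_2=\int_{|y|>M_n}\frac{\psi_\alpha^{(p)}\,B}{A(A+B)}\,dy.
\]
For $E_1$ the monotonicity of $B$ gives $B\ge B(M_n)$ with $B(M_n)/A\to\infty$, so $A+B\simeq B$ and $E_1\le C\int_{\Gamma_n<|y|<M_n}(\ln|y|)^p|y|^{-(2\alpha+2)}e^{y^2/2}\,dy$; the integrand increases, so a Laplace estimate at the endpoint $M_n$ bounds $E_1$ by $C(\ln\ln n)^p n^{1-\alpha/2}(\ln n)^{-(\alpha+3/2)}$. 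For $E_2$ I split at $y^\ast$: on $M_n<|y|<y^\ast$ one has $B\ge A$, so $B/(A+B)\le 1$ and this piece is at most $(\Psi^{(p)}(M_n)-\Psi^{(p)}(y^\ast))/A$, which by $y^\ast-M_n\sim(\alpha+1)\ln\ln n/(2M_n)$ and $\psi_\alpha^{(p)}(M_n)\sim C(\ln\ln n)^p(\ln n)^{-(\alpha+1)/2}$ is $O\!\big((\ln\ln n)^{p+1}n^{1-\alpha/2}(\ln n)^{-(\alpha/2+1)}\big)$; on $|y|>y^\ast$ one has $B\le A$, so $B/(A+B)\le B/A$, and using $\phi(y^\ast)=A\,(y^\ast)^{-(\alpha+1)}$ together with $\int_{|y|>y^\ast}(\ln|y|)^p\phi\,dy\sim C(\ln\ln n)^p\phi(y^\ast)/y^\ast$ one obtains the same order. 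Summing the three pieces and using $1\ge \tfrac12\wedge\alpha$ yields the announced bound; uniformity in $\alpha$ over a compact subset of $(0,2)$ is automatic since every constant above depends continuously on $\alpha$.

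The main obstacle is the honest treatment of this transition layer. The cutoff $M_n$ in the main term is \emph{not} the true crossover $y^\ast$, and the $O(\ln\ln n/\sqrt{\ln n})$ gap between them is precisely what produces the extra $\ln\ln n$ factor — the same subtle compensation already flagged after \eqref{eq:un}. Turning the heuristic ``peak times width'' Laplace arguments into rigorous, $\alpha$-uniform bounds for incomplete Gaussian moments (by integration by parts rather than by formal saddle-point), and checking that replacing $\psi_\alpha^{(0)}$ by $|y|^{-(\alpha+1)}$ near $\Gamma_n$ and that the far region $|y|\gg y^\ast$ are both negligible, are the only genuine technical points; the rest is bookkeeping, after which the explicit values of $\Psi^{(p)}(M_n)$ recorded in the proof of Proposition~\ref{L:tout} feed into the conclusion. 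I note in passing that this scheme actually delivers the sharper exponent $-(\alpha/2+1)$, so the stated $-(\alpha/2+\tfrac12\wedge\alpha)$ leaves comfortable room.
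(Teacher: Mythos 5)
Your proof is correct, and it takes a genuinely different route from the paper's. The paper argues in two stages: it first establishes \eqref{eq:etape1}, comparing the integral with $\Psi^{(p)}(y^*(w_n(\theta)))/(c_\alpha w_n(\theta)^\alpha)$ where $y^*(w)$ solves $\phi(y^*)=w^\alpha \psi_\alpha^{(0)}(y^*)$, by a two-sided squeeze — the lower bound pushes the threshold out to $y^*(\mu_n w_n(\theta))$ with an auxiliary scale $\mu_n=1/\ln(n)$, the upper bound contracts it to $\gamma_n y^*(w_n(\theta))$ with $\gamma_n=1-\ln(n)^{-1/2}$ — and then transfers $y^*(w_n(\theta))$ to $M_n$ via \eqref{eq:MaM}. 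You instead extract the main term exactly through the identity $1/(A+B)=1/A-B/(A(A+B))$ on $\{|y|>M_n\}$, and control the two error integrals with no tunable parameter: the subcritical piece $E_1$ by an endpoint integration-by-parts (Laplace) estimate, and the correction $E_2$ by splitting at the true crossover $y^*$ using only $B/(A+B)\le 1$ below it and $B/(A+B)\le B/A$ above it. Your bookkeeping checks out: $E_1=O\bigl(n^{1-\alpha/2}(\ln\ln n)^{p}(\ln n)^{-(\alpha+3/2)}\bigr)$ and both pieces of $E_2$ are $O\bigl(n^{1-\alpha/2}(\ln\ln n)^{p+1}(\ln n)^{-(\alpha/2+1)}\bigr)$, which implies the stated bound since $1\ge \tfrac12\wedge\alpha$. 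The comparison is instructive: the paper's factor $\ln(n)^{-(1/2\wedge\alpha)}$, which degrades as $\alpha\to 0$, is an artifact of the choice $\mu_n=1/\ln(n)$ (the term $\mu_n^\alpha$ in its lower bound), whereas in your decomposition the error is governed solely by the width $y^*-M_n\sim(\alpha+1)\ln\ln n/(2M_n)$ of the transition layer; your exponent $-(\alpha/2+1)$ is moreover sharp, because $B/(A+B)\ge 1/2$ on $(M_n,y^*)$ shows that layer contributes genuinely at this order. The price of your route is the two-term expansion $(y^*)^2=(2-\alpha)\ln n+(\alpha+1)\ln\ln n+O(1)$ and rigorous endpoint Gaussian estimates, which you correctly flag as the only real technical points and which are standard; the paper's route, modelled on Section A.6 of the Supplement to Aït-Sahalia and Jacod, avoids the fine layer analysis at the cost of the weaker (but still sufficient) rate.
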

\begin{proof}
The proof is similar to section A.6 of the Supplement to Aït-Sahalia and Jacod \cite{AitSahalia2012}.
We start by proving that
\begin{align}
      \bigg|\int_{|y|>\Gamma_n} \frac{\psi_{\alpha}^{(p)} (y) \; \psi_{\alpha}^{(0)} (y)}{\phi(y) + c_{\alpha} w_n(\theta)^{\alpha} \psi_{\alpha}^{(0)}(y)}dy   & - \frac{\Psi^{(p)} (y^*(w_n(\theta)))}{ c_{\alpha} w_n(\theta)^{\alpha}}   \bigg| 
      \leq C \frac{\Psi^{(p)}  (y^*(w_n(\theta)))}{ w_n(\theta)^{\alpha}}  \frac{\ln(\ln n)}{\ln(n)^{1/2\wedge\alpha}},\label{eq:etape1}
\end{align}
where $y^*(w)$ is defined by $\phi(y^*(w))= w^{\alpha} \psi_{\alpha}^{(0)}(y^*(w))$. We note that $y^*(w) \underset{w\to0}{\sim} \sqrt{2 \alpha \ln(1/w) }$.

We first lower bound the difference. We set $\mu_n = 1/\ln(n) $.
We check that for $n$ large, $\Gamma_n < y^*(\mu_n w_n(\theta))$ as $y^*(\mu_n w_n(\theta))\underset{n\to\infty}{\sim} \sqrt{(2-\alpha)\ln(n)}=M_n= \sqrt{2} \Gamma_n$. As $\phi/\psi_{\alpha}^{(0)}$ is decreasing after a certain rank,
$$\forall |y|>y^*(\mu_n w_n(\theta)), \quad \phi(y)< (\mu_n w_n(\theta))^{\alpha} \psi_{\alpha}^{(0)}(y),$$
so we have for $n$ large
\begin{align*}
    \int_{|y|>\Gamma_n} \frac{\psi_{\alpha}^{(p)} (y) \; \psi_{\alpha}^{(0)} (y)}{\phi(y) + c_{\alpha} w_n(\theta)^{\alpha} \psi_{\alpha}^{(0)}(y)}dy &\geq  
    \frac{1}{w_n(\theta)^{\alpha}}\int_{|y|>y^*(\mu_n w_n(\theta))} \frac{\psi_{\alpha}^{(p)} (y) \; \psi_{\alpha}^{(0)} (y)}{\psi_{\alpha}^{(0)}(y)}dy \; \frac{1}{c_{\alpha} + \mu_n^{\alpha}} \\
    &\geq \frac{1}{c_{\alpha}w_n(\theta)^{\alpha}} \Psi^{(p)}(y^*(\mu_n w_n(\theta))) \frac{1}{1 + \mu_n^{\alpha}/c_{\alpha}}.
\end{align*}
Hence,
\begin{align*}
\int_{|y|>\Gamma_n} &\frac{\psi_{\alpha}^{(p)} (y) \; \psi_{\alpha}^{(0)} (y)}{\phi(y) + c_{\alpha} w_n(\theta)^{\alpha} \psi_{\alpha}^{(0)}(y)}dy  - \frac{\Psi^{(p)}(y^*( w_n(\theta)))}{ c_{\alpha} w_n(\theta)^{\alpha}} \\
    &\quad \geq C \frac{\Psi^{(p)}(y^*( w_n(\theta)))}{ w_n(\theta)^{\alpha}}
    \biggl(\frac{1}{1 + \mu_n^{\alpha}/c_{\alpha}} \Bigl(\frac{\Psi^{(p)}(y^*( \mu_nw_n(\theta)))}{\Psi^{(p)}(y^*( w_n(\theta)))} - 1\Bigr)+ \Bigl(\frac{1}{1 + \mu_n^{\alpha}/c_{\alpha}} - 1\Bigr) \biggr).
\end{align*}
Using that $\Psi^{(p)}(y) \underset{y\to\infty}{\sim} C\frac{(\ln y)^p}{y^{\alpha}}$ and $\frac{y^*( w_n(\theta))}{y^*( \mu_n w_n(\theta))} \sim \frac{1}{\sqrt{1+|\frac{\ln(\mu_n)}{\ln(w_n(\theta))}|}}$, we conclude that 
\begin{align}
\int_{|y|>\Gamma_n} \frac{\psi_{\alpha}^{(p)} (y) \; \psi_{\alpha}^{(0)} (y)}{\phi(y) + c_{\alpha} w_n(\theta)^{\alpha} \psi_{\alpha}^{(0)}(y)}dy  & - \frac{\Psi^{(p)}(y^*( w_n(\theta)))}{ c_{\alpha} w_n(\theta)^{\alpha}} \nonumber \\
& \geq -C \frac{\Psi^{(p)}(y^*( w_n(\theta)))}{ w_n(\theta)^{\alpha}}  \left(\bigl|\frac{\ln(\mu_n)}{\ln(w_n(\theta))}\bigr| + \mu_n^{\alpha} \right) \nonumber \\
& \geq -C  \frac{\Psi^{(p)}(y^*( w_n(\theta)))}{ w_n(\theta)^{\alpha}}  \frac{\ln(\ln n)}{\ln(n)^{\alpha \wedge 1}}. \label{eq:lower}
\end{align}

We now prove an upper bound. We introduce the sequence $(\gamma_n)$ such that $\gamma_n \to 1$ with a rate which we later specify. We use that 
$$ \forall  |y| <\gamma_n y^*( w_n(\theta)), \quad \phi(y)>\phi(\gamma_n y^*( w_n(\theta)))=\phi(y^*( w_n(\theta)))^{\gamma_n^2} > C\Bigl(\frac{w_n(\theta)^{\alpha}}{y^*( w_n(\theta))^{\alpha+1}}\Bigr)^{\gamma_n^2},$$
and that $\int \psi_{\alpha}^{(p)} \; \psi_{\alpha}^{(0)}< \infty$ to obtain
\begin{align*}
    \int_{|y|>\Gamma_n} & \frac{\psi_{\alpha}^{(p)} (y) \; \psi_{\alpha}^{(0)} (y)}{\phi(y) + c_{\alpha} w_n(\theta)^{\alpha} \psi_{\alpha}^{(0)}(y)} dy\\
    & \leq \int_{|y|< \gamma_n y^*( w_n(\theta))} \frac{\psi_{\alpha}^{(p)} (y) \; \psi_{\alpha}^{(0)} (y)}{\phi(y)}dy 
    + \frac{1}{c_{\alpha} w_n(\theta)^{\alpha}} \int_{|y|> \gamma_n y^*( w_n(\theta))} \frac{\psi_{\alpha}^{(p)} (y) \; \psi_{\alpha}^{(0)} (y)}{\psi_{\alpha}^{(0)}(y)}dy \\
    & \leq C \frac{y^*( w_n(\theta))^{(\alpha+1)\gamma_n^2} }{w_n(\theta)^{\alpha \gamma_n^2}}
    +  \frac{1}{ c_{\alpha} w_n(\theta)^{\alpha}} \Psi^{(p)}(\gamma_n y^*( w_n(\theta))) .
\end{align*}
Recalling that $y^*( w_n(\theta)) \underset{n\to\infty}{\sim} \sqrt{(2- \alpha)\ln(n)}$, and that $\Psi^{(p)}(y) \underset{y\to\infty}{\sim} C\frac{(\ln y)^p}{y^{\alpha}}$, we have 
$$\frac{\Psi^{(p)}(\gamma_n y^*( w_n(\theta)))}{\Psi^{(p)}( y^*( w_n(\theta)))} \underset{n\to\infty}{\sim} 1/\gamma_n^{\alpha}.$$
We obtain
\begin{align*}
    \int_{|y|>\Gamma_n} \frac{\psi_{\alpha}^{(p)} (y) \; \psi_{\alpha}^{(0)} (y)}{\phi(y) + c_{\alpha} w_n(\theta)^{\alpha} \psi_{\alpha}^{(0)}(y)}dy  &- \frac{1}{ c_{\alpha} w_n(\theta)^{\alpha}} \Psi^{(p)}(y^*( w_n(\theta))) \\
    \leq C \frac{ \Psi^{(p)}(y^*( w_n(\theta)))}{ w_n(\theta)^{\alpha}} &\left( \frac{\ln(n)^{(\alpha +1)\gamma_n^2/2 + \alpha/2}}{n^{(1 - \alpha/2)(1-\gamma_n^2)} (\ln(\ln n))^p} + \Bigl( \frac{1}{\gamma_n^{\alpha}} - 1\Bigr) \right).
\end{align*}
We choose $\gamma_n=1- \ln(n)^{-1/2}$, then
$$\frac{1}{n^{(1 - \alpha/2)(1-\gamma_n^2)}} = \frac{1}{n^{(1 - \alpha/2)(\frac{2}{\sqrt{\ln(n)}}-\frac{1}{\ln(n)})}} =\frac{e^{1 - \alpha/2}}{e^{(2 - \alpha)\ln(n)^{1/2}}},$$
so $\frac{\ln(n)^{(\alpha +1)\gamma_n^2/2 + \alpha/2}}{n^{(1 - \alpha/2)(1-\gamma_n^2)} (\ln(\ln n))^p}$ is negligible compared to $\frac{1}{\gamma_n^{\alpha}} - 1$. We conclude that
\begin{align}
    \int_{|y|>\Gamma_n} &\frac{\psi_{\alpha}^{(p)} (y) \; \psi_{\alpha}^{(0)} (y)}{\phi(y) + c_{\alpha} w_n(\theta)^{\alpha} \psi_{\alpha}^{(0)}(y)}dy  - \frac{\Psi^{(p)}(y^*( w_n(\theta)))}{ c_{\alpha} w_n(\theta)^{\alpha}} \leq C \frac{\Psi^{(p)}(y^*( w_n(\theta)))}{ w_n(\theta)^{\alpha}}  \frac{1}{\ln(n)^{1/2}}. \label{eq:upper}
\end{align}
Combining  \eqref{eq:lower} and \eqref{eq:upper} we conclude \eqref{eq:etape1}.

Finally, we bring $y^*( w_n(\theta))$ to $M_n=\sqrt{(2- \alpha)\ln(n)}$. Recalling that $y^*( w_n(\theta))$ is defined by $\phi(y^*( w_n(\theta)))= w_n(\theta)^{\alpha} \psi_{\alpha}^{(0)}(y^*( w_n(\theta)))$, it yields $y^*( w_n(\theta))= M_n + O(\ln(\ln n))$. This allows us to write
\begin{equation}\label{eq:MaM}
|\Psi^{(p)}(y^*( w_n(\theta))) - \Psi^{(p)}( M_n)|\leq C \Psi^{(p)}( M_n) \frac{\ln(\ln n)}{\sqrt{\ln(n)}}.    
\end{equation}
We conclude combining \eqref{eq:etape1}, \eqref{eq:MaM} and that $\Psi^{(p)}( M_n) \underset{n\to\infty}{\sim} C\frac{(\ln(\ln n))^p}{\ln(n)^{\theta/2}}$.
\end{proof}

\subsubsection{Proof of Lemma \ref{L:MajFunc}}
Let $K\subset (0,2)$ be a compact set. From Section 14 in Sato \cite{Sato} we have
$$\mathcal{D}^{(l)}(\partial^m_{\alpha^m} \varphi_{\alpha})(z) \underset{z\to\infty}{\sim} \frac{C(\ln|z|)^m}{|z|^{\alpha+1}},$$
and we also can check  the uniform bound
$$\exists C>0, \; \forall \alpha \in K, \; \forall z\neq0, \quad |\mathcal{D}^{(l)}(\partial^m_{\alpha^m} \varphi_{\alpha})(z)|\leq C \psi_{\alpha}^{(m)}(z).$$
We follow the proof of Lemma \ref{l:approxD} for $f_{\alpha}^{(k, l, m)}$ and write the decomposition
    \begin{align*}
       f_{\alpha}^{(k, l, m)}(y, w) &=
       \mathcal{D}^{(k)}(\phi)(y) \int \mathcal{D}^{(l)}(\partial^m_{\alpha^m} \varphi_{\alpha})(z) dz\\
       &+\int_{|z|\leq1/w} \bigl(\mathcal{D}^{(k)}(\phi)(y-zw)-\mathcal{D}^{(k)}(\phi)(y)\bigr) \mathcal{D}^{(l)}(\partial^m_{\alpha^m} \varphi_{\alpha})(z) dz \\
       &+ \int_{|z|>1/w} \bigl(\mathcal{D}^{(k)}(\phi)(\phi)(y-zw)-\mathcal{D}^{(k)}(\phi)(y)\bigr) \mathcal{D}^{(l)}(\partial^m_{\alpha^m} \varphi_{\alpha})(z) dz .
    \end{align*}
As both $\mathcal{D}$ and $\partial_{\alpha}$ conserve the parity $\forall l, m \in \N$, $z\to \mathcal{D}^{(l)}(\partial^m_{\alpha^m} \varphi_{\alpha})(z)$ is an even function, and we obtain similarly to \eqref{eq:g1} that
$$\Bigl|\int_{|z|\leq1/w} \bigl(\mathcal{D}^{(k)}(\phi)(y-zw)-\mathcal{D}^{(k)}(\phi)(y) \bigr) \mathcal{D}^{(l)}(\partial^m_{\alpha^m} \varphi_{\alpha})(z) dz \Bigr|\leq C w^\alpha \ln(1/w)^m \phi(\frac{y}{2}),$$
and similarly to \eqref{eq:g2} and \eqref{eq:g3} we obtain
\begin{align*}
    \Bigl|\int_{|z|>1/w} \mathcal{D}^{(l)}(\partial^m_{\alpha^m} \varphi_{\alpha})(z) dz\Bigr| & \leq C w^\alpha \ln(1/w)^m,\\
    \Bigl|\int_{|z|>1/w} \mathcal{D}^{(k)}(\phi)(\phi)(y-zw) \mathcal{D}^{(l)}(\partial^m_{\alpha^m} \varphi_{\alpha})(z) dz\Bigr| &\leq  C w^\alpha \ln(1/w)^m \psi_{\alpha}^{(m)}(y).\\
\end{align*}
Hence, using that $\forall y , \;|\mathcal{D}^{(k)}(\phi)(y)| \leq C \phi(\frac{y}{2}) \leq C  \psi_{\alpha}^{(m)}(y)$, we get that $\exists C>0, \; \forall \alpha \in K, \; \forall w \in (0, 1/3], \; \forall y$
\begin{equation}\label{eq:estimfklm}
    \Bigl|f_{\alpha}^{(k, l, m)}(y, w) - \mathcal{D}^{(k)}(\phi)(y) \int \mathcal{D}^{(l)}(\partial^m_{\alpha^m} \varphi_{\alpha})(z) dz \Bigr| \leq C  \ln(1/w)^m w^\alpha \psi_{\alpha}^{(m)}(y).
\end{equation}
We conclude \eqref{eq:majfklm} by noting that
\begin{align*}
    &\text{if } m>0, \quad \int \mathcal{D}^{(l)}(\partial^m_{\alpha^m} \varphi_{\alpha})(z) dz = \partial^m_{\alpha^m}\int \mathcal{D}^{(l)}( \varphi_{\alpha})(z) dz =0,\\
    &\text{if } l>0, \quad \int \mathcal{D}^{(l)}(\partial^m_{\alpha^m} \varphi_{\alpha})(z) dz = \bigl[x\mathcal{D}^{(l-1)}(\partial^m_{\alpha^m} \varphi_{\alpha})(x)\bigr]_{-\infty}^{+\infty}=0,
\end{align*}
where we used that
$$\mathcal{D}^{(l-1)}(\partial^m_{\alpha^m} \varphi_{\alpha})(z) \underset{z\to\infty}{\sim} \frac{C(\ln|z|)^m}{|z|^{\alpha+1}}.$$
Turning to $f_{\alpha}^{(k, 0, 0)}$, we note that $\mathcal{D}^{(k)}(\phi)(y)=P_{2k}(y)\phi(y)$ with $P_{2k}$ a polynomial of order $2k$. 
From \eqref{eq:estimfklm}, we hence have $\exists C>0, \; \forall \alpha \in K, \; \forall w \in (0, 1/3], \; \forall y$
$$    |f_{\alpha}^{(k, 0, 0)}(y, w)| \leq C \big((1+|y|^{2k})\phi(y) + \ln(1/w)^m w^\alpha \psi_{\alpha}^{(m)}(y)\big).$$
We refine this bound exactly as done for the bound of $g_\alpha$ in \eqref{eq:majfghk}, detailed in \eqref{eq:refineboundg}, to obtain \eqref{eq:majfk00}.


\subsubsection{Proof of Lemma \ref{L:der2der3}}
We start by noting that $\forall k,l,m \in \N$
\begin{align*}
    \partial_\sigma \Bigl(\frac{\sqrt{n}}{\sigma} f_{\alpha}^{(k, l,m)}(y, w(\theta)) \Bigr) &=- \frac{\sqrt{n}}{\sigma^2}f_{\alpha}^{(k+1, l,m)}(y, w(\theta)), \\
    \partial_\delta \Bigl(\frac{\sqrt{n}}{\sigma} f_{\alpha}^{(k, l,m)}(y, w(\theta))\Bigr)&=- \frac{\sqrt{n}}{\sigma \delta}  f_{\alpha}^{(k, l+1,m)}(y, w(\theta)),\\
    \partial_\alpha \Bigl(\frac{\sqrt{n}}{\sigma} f_{\alpha}^{(k, l,m)}(y, w(\theta)) \Bigr)&= \frac{\sqrt{n}}{\sigma}\left[f_{\alpha}^{(k, l,m+1)}(y, w(\theta)) - \frac{\ln(n)}{\alpha^2} f_{\alpha}^{(k, l+1,m)}(y, w(\theta)) \right].
\end{align*}
Moreover, for $a,b,c \in \{\sigma, \delta, \alpha \}$, we have
\begin{align*}
\partial^2_{ a b} \ln p_{1/n}=&\frac{\partial^2_{ a b} p_{1/n} }{p_{1/n}} - \frac{\partial_{ a} p_{1/n} \partial_{ b} p_{1/n} }{(p_{1/n})^2}, \\
\partial^3_{ a b c} \ln p_{1/n} =& 
\frac{\partial^3_{ a b c}p_{ 1/n}}{p_{ 1/n}} - \frac{\partial^2_{ a b}p_{ 1/n} \partial_{ c}p_{1/n}
+ \partial^2_{ a c}p_{1/n} \partial_{ b}p_{1/n}
+ \partial^2_{b c}p_{1/n} \partial_{a}p_{1/n}}{(p_{1/n})^2}
+2\frac{\partial_{ a} p_{1/n} \partial_{ b} p_{1/n} \partial_{c}p_{1/n}}{(p_{1/n})^3}.
\end{align*}  
Recalling that
$$p_{1/n}(y, \theta) = \frac{\sqrt{n}}{\sigma}f_{\alpha}\Bigl(\frac{\sqrt{n}}{\sigma}y, w_n(\theta)\Bigr)= \frac{\sqrt{n}}{\sigma}f_{\alpha}^{(0, 0,0)}\Bigl(\frac{\sqrt{n}}{\sigma}y, w_n(\theta)\Bigr),$$
we have
$$\partial^2_{\sigma \sigma} \ln (p_{1/n}(y, \theta)) = \biggl[ \frac{2}{\sigma^2} \frac{f_{\alpha}^{(2, 0,0)}}{f_{\alpha}} +
\frac{1}{\sigma^2} \frac{f_{\alpha}^{(1, 0,0)}}{f_{\alpha}} 
- \frac{1}{\sigma^2} \Bigl(\frac{f_{\alpha}^{(1, 0,0)}}{f_{\alpha}}\Bigr)^2 \biggr] \Bigl(\frac{\sqrt{n}}{\sigma}y, w_n(\theta)\Bigr),$$
and we conclude using \eqref{eq:majfghk}, Lemma \ref{L:MajFunc} and that $\ln(1/w_n(\theta)) \leq C \ln(n)$ that
$$\exists C>0, \;\exists p>0, \; \forall y, \qquad
    |\partial^2_{\sigma \sigma} \ln p_{1/n}(y, \theta_0)| \leq C \ln(n)^p.$$

Similarly, for $a,b \in \{\sigma, \delta, \alpha \}$ and $(a,b)\neq(\sigma, \sigma)$, $\partial^2_{a b} \ln (p_{1/n}(y, \theta))$ can be written as a sum and product of 
$$c_{k,l,m}(\theta) \ln(n)^p \,\frac{f_{\alpha}^{(k, l,m)}}{f_{\alpha}}\Bigl(\frac{\sqrt{n}}{\sigma}y, w_n(\theta)\Bigr),$$ for $l+m>0$, $l,m \leq 2$ and $c_{k,l,m}(\theta)$ coefficients which are uniformly bounded on $V$. Hence, we have from \eqref{eq:majfghk} and Lemma \ref{L:MajFunc} that $\exists C >0, \; \exists p,q >0, \; \forall y$
$$|\partial^2_{a b} \ln (p_{1/n}(y, \theta)) |\leq C \ln(n)^p \ln(1/w_n(\theta))^q \frac{w_n(\theta)^{\alpha}\psi_{\alpha}^{(2)}(\frac{\sqrt{n}}{\sigma} y)}{\phi(\frac{\sqrt{n}}{\sigma} y)+w_n(\theta)^{\alpha}\psi_{\alpha}^{(0)}(\frac{\sqrt{n}}{\sigma} y)},$$
and we conclude \eqref{eq:boundder2}. The bounds in \eqref{eq:boundder3} are obtained similarly.

\medskip

\noindent
{\bf Funding.}
This work was supported by french ANR reference ANR-21-CE40-0021.


\end{document}